\newtheorem{thm}{Theorem}[section]
\newtheorem{lem}[thm]{Lemma}
\newtheorem{cor}[thm]{Corollary}
\newtheorem{prp}[thm]{Proposition}
\theoremstyle{definition}
\newtheorem{dfn}[thm]{Definition}
\author{Tristan Bice}
\address{
Mathematical Logic Group\\
Kobe University\\
Japan}
\email{tristan\_bice@kurt.scitec.kobe-u.ac.jp}
\thanks{This research has been supported by the Japanese Ministry of Education, Culture, Sports, Science and Technology (Mombukagakusho)}
\keywords{C$^*$-Algebras, Real Rank Zero, Projections, Order}
\subjclass[2010]{47A46}
\begin{document}

\title{The Order on Projections in C$^*$-Algebras of Real Rank Zero}

\begin{abstract}
We prove a number of fundamental facts about the canonical order on projections in C$^*$-algebras of real rank zero.  Specifically, we show that this order is separative and that arbitrary countable collections have equivalent (in terms of their lower bounds) decreasing sequences.  Under the further assumption that the order is countably downwards closed, we show how to characterize greatest lower bounds of finite collections of projections, and their existence, using the norm and spectrum of simple product expressions of the projections in question.  We also characterize the points at which the canonical homomorphism to the Calkin algebra preserves least upper bounds of countable collections of projections, namely that this occurs precisely when the span of the corresponding subspaces is closed.
\end{abstract}

\maketitle

\section{Introduction}

Real rank zero C$^*$-algebras have been studied for over two decades, since they were first defined in \cite{p}.  Furthermore, specific examples of such C$^*$-algebras have been an object of study long before that, like the Calkin algebra, for example, which was first defined in \cite{o} another 50 years earlier.  Consequently, it is somewhat surprising that the basic questions we answer in this paper have not been dealt with before.  More recently it has come to light that certain famous problems, like the Kadison-Singer conjecture for example, have an equivalent formulation in terms of the order on projections (see \cite{q}).  Given that even a very basic understanding of this order structure has been lacking, it is perhaps no wonder that a problem like the Kadison-Singer conjecture has remained open for over half a century.

For example, one can ask the following simple questions.  When do two projections have a non-zero lower bound?  When do they have a g.l.b. (greatest lower bound)?  How can this g.l.b. be characterized?  What about l.u.b.s (least upper bounds)?  What about arbitrary finite and countable collections?  When they do not have a g.l.b., how pathological do things get?  Can we even have finite gaps, for example? 

Before we detail our approach to these questions, let us step back a little and look at what is already well known. Take a Hilbert space $H$ (over the scalar field $\mathbb{F}=\mathbb{R}$ or $\mathbb{C}$) and let $\overline{\mathcal{V}}(H)$ denote the collection of closed subspaces of $H$.  It is common knowledge that $\overline{\mathcal{V}}(H)$ is a lattice w.r.t. $\subseteq$, specifically $U\cap V$ and $\overline{U+V}$ will be the g.l.b. and l.u.b. of any $U,V\in\overline{\mathcal{V}}(H)$.  Also, $\overline{\mathcal{V}}(H)$ is separative w.r.t. $\subseteq$, for if $U\nsubseteq V$ then $U\cap(U\cap V)^\perp$ is a non-empty subspace of $U$ with zero intersection with $V$.  Thanks to the one-to-one correspondence between projections on $H$ and closed subspaces of $H$, which is also an order isomorphism (i.e. $\mathcal{R}(P)\subseteq\mathcal{R}(Q)\Leftrightarrow PQ=P$), the same applies to the projections $\mathcal{P}(\mathcal{B}(H))$ on $H$ and, more generally, to the projections $\mathcal{P}(A)$ in $A$, for any subalgebra $A$ of $\mathcal{B}(H)$ closed in the weak operator topology, i.e. a von Neumann algebra.

Classical works like \cite{j} and \cite{k} examine the existence of g.l.b.s and l.u.b.s with respect to the collection of \emph{all} self-adjoint elements $\mathcal{S}(A)$ of $A$ (with the order defined by $S\leq T\Leftrightarrow T-S$ is positive).  The situation is substantially different in this case, as pairs of projections in a von Neumann algebra $A$ can only have a g.l.b. with respect to $\mathcal{S}(A)$ if they commute, by \cite{j} Corollary 4 (we extend this to arbitrary unital C$^*$-algebras in \thref{PQncom}).  Even commutativity is not enough to guarantee that they have a g.l.b., as pairs of projections on $H$ have a g.l.b. with respect to $\mathcal{S}(\mathcal{B}(H))$ only if they are comparable, by \cite{j} Lemma 7.

This paper focuses on going in the other direction, expanding the range of C$^*$-algebras under consideration rather than the partial order.  Specifically, we examine the existence of g.l.b.s and l.u.b.s, still with respect to $\mathcal{P}(A)$, but in the more general case when $A$ is only assumed to have real rank zero.  Our primary motivation is the study of the projections in the Calkin algebra of $H$, the quotient $\mathcal{C}(H)=\mathcal{B}(H)/\mathcal{K}(H)$ where $\mathcal{K}(H)$ is the collection of compact operators.  This algebra is well known to have real rank zero (see \cite{l}), even though it is not a von Neumann algebra.  Indeed, it is shown in \cite{b} Proposition 5.26 that $\mathcal{P}(\mathcal{C}(H))$ is not even a lattice.  The order structure of $\mathcal{P}(\mathcal{C}(H))$ has also been investigated in \cite{n} and \cite{e} with a particular focus on linearly ordered (and hence commutative) gaps.  Indeed, all such work until now has dealt only with commutative subsets of $\mathcal{P}(\mathcal{C}(H))$, in particular on subsets of projections coming from the canonical embedding (with respect to some basis $(e_n)$ of $H$) of subsets of $\mathscr{P}(\omega)/\mathrm{Fin}$ into $\mathcal{C}(H)$.  No doubt this is because finite commutative subsets of $\mathcal{P}(A)$, for arbitrary C$^*$-algebra $A$, do necessarily have a g.l.b. given simply by their product.  Despite the fact this may fail for non-commutative subsets, we show in this paper that these subsets are not as intractable as they might at first appear, at least when they are countable and $A$ has real rank zero.

The reason for this is that real rank zero C$^*$-algebras are somewhat close to being von Neumann algebras in the sense that, while spectral projections of self-adjoint elements do not always exist in the algebra itself, arbitrarily close approximations do.  This allows some properties of $\mathcal{P}(A)$ to still be proved in this more general context, albeit with more effort using these spectral projection approximations.  For example, in \thref{sep} we show that $\mathcal{P}(A)$ must still be separative, while in \thref{arbcount} we show that the lower bounds of an arbitrary countable subset of $\mathcal{P}(A)$ are precisely the lower bounds of some (possibly non-strictly) decreasing sequence in $\mathcal{P}(A)$.  This allows some consequences of being a lattice to still be proved, despite the fact $\mathcal{P}(A)$ may not be a lattice.  For example we show in \thref{nogap} that $\mathcal{P}(A)$ having no atoms or $(\omega,\omega)$-gaps still implies it has no non-trivial countable gaps.  In \S\ref{ub} we investigate l.u.b.s, showing in (\ref{glblub}) that, when dealing with a C$^*$-algebra $A$ of real rank zero such that $\mathcal{P}(A)$ is $\sigma$-closed, a pair of projections has an l.u.b. if and only if it has a g.l.b..  We also point out some connections to sums of subspaces, showing in \thref{findim} that a countable sum of subspaces in $H$ is closed if and only if the canonical $\pi:\mathcal{B}(H)\rightarrow\mathcal{C}(H)$ preserves the l.u.b. of the corresponding projections.  Finally, we prove one result, \thref{PQncom}, about the existence of g.l.b.s of pairs of projections with respect to the collection of all self-adjoint elements, extending \cite{j} Corollary 4 from von Neumann algebras to arbitrary unital C$^*$-algebras.

\section{Spectral Families}

We first discuss the spectral families that will be used throughout this paper.

\begin{dfn}\thlabel{sf}
If $S\in\mathcal{S}(\mathcal{B}(H))$, for some Hilbert space $H$, then $E_S:\mathbb{R}\rightarrow\mathcal{P}(\mathcal{B}(H))$ is the \emph{spectral family} of $S$ if $\langle Sv,v\rangle\leq t$, for all unit $v\in\mathcal{R}(E_S(t))$, and $t<\langle Sv,v\rangle$, for all unit $v\in\mathcal{R}(E^\perp_S(t))$.
\end{dfn}

Our reference for spectral families is \cite{a}, where the existence and uniqueness of the spectral family $E_S$, for any $S\in\mathcal{S}(\mathcal{B}(H))$, is proved in \cite{a} Theorem 7.17.  Note that the case $\mathbb{F}=\mathbb{R}$ is proved here too, even though the spectral theorem for arbitrary normal operators requires $\mathbb{F}=\mathbb{C}$.  We also denote by $E_S(t-)$ the limit, approaching $t$ from below, of $E_S$ in the weak (or strong) operator topology.  Equivalently, $E_S(t-)$ can be given the same definition as in \thref{sf}, just with the $\leq$ and $<$ exchanged.  Also note that spectral families are usually defined in a different way, together with spectral measures and integrals.  However, we have chosen to define spectral families here simply in terms of the above elementary inequalities as these inequalities are the only spectral family properties used throughout this paper.  For example, we can use them to show that
\begin{equation}\label{PQP}
E^\perp_{PQP}(1-)=P\wedge Q,
\end{equation}
for projections $P,Q\in\mathcal{B}(H)$.  To see this, simply note that, for unit $v\in\mathcal{R}(P\wedge Q)=\mathcal{R}(P)\cap\mathcal{R}(Q)$, $\langle PQPv,v\rangle=\langle v,v\rangle=1$, while for unit $v\in H\backslash\mathcal{R}(P\wedge Q)$ we either have $v\notin\mathcal{R}(P)$, and hence $\langle PQPv,v\rangle\leq||PQPv||\leq||Pv||<1$, or $v\in\mathcal{R}(P)\backslash\mathcal{R}(Q)$, so $\langle PQPv,v\rangle=\langle Qv,v\rangle\leq||Qv||<1$.%, from which (\ref{PQP}) now follows.

One important point about spectral familes of elements of a C$^*$-algebra $A$ is that they depend on the particular Hilbert space on which $A$ is represented, even though some inequalities relating to them do not.  Specifically, given a C$^*$-algebra $A\subseteq\mathcal{B}(H)$, a faithful representation $\pi$ of $A$ on another Hilbert space $H'$, $S\in\mathcal{B}(H)$ and $t\in\mathbb{R}$, it may well be that $E_S(t)$ does not lie in $A$ and, even if it did, there would still be no guarantee that $\pi(E_S(t))=E_{\pi(S)}(t)$.  However, if it does lie in $A$, we would necessarily have $\pi(E_S(t))\leq E_{\pi(S)}(t)$.  More generally, the statement $\pi(P)=E_{\pi(S)}(t)$, for $P,S\in A$, depends on the (faithful) representation $\pi$, while the statement $\pi(P)\leq E_{\pi(S)}(t)$ does not, as we now show.

\begin{prp}\thlabel{pispec}
Let $H$ and $H'$ be Hilbert spaces, let $A\subseteq\mathcal{B}(H)$ be a C$^*$-algebra and let $\pi:A\rightarrow\mathcal{B}(H')$ be a homomorphism\footnote{Note that when $H$ and $H'$ are Hilbert spaces, $A\subseteq\mathcal{B}(H)$ and we say $\pi:A\rightarrow\mathcal{B}(H')$ is a homomorphism, we mean that $\pi$ preserves the algebraic operations (addition, multiplication and the adjoint operation) and, if the identity operator $1_{\mathcal{B}(H)}$ is in $A$, $\pi(1_{\mathcal{B}(H)})=1_{\mathcal{B}(H')}$.  If $1_{\mathcal{B}(H)}\notin A$ then we can always extend $\pi:A\rightarrow\mathcal{B}(H')$ to a homomorphism $\pi':A+\mathbb{F}1_{\mathcal{B}(H)}\rightarrow\mathcal{B}(H')$ by setting $\pi'(a+\lambda1_{\mathcal{B}(H)})=\pi(a)+\lambda1_{\mathcal{B}(H')}$, for all $a\in A$ and $\lambda\in\mathbb{F}$.
}.  For all $P\in\mathcal{P}(A)$, $S\in\mathcal{S}(A)$ and $t\in\mathbb{R}$, $P\leq E_S(t)$ implies $\pi(P)\leq E_{\pi(S)}(t)$ and $E_S(t-)\leq P$ implies $E_{\pi(S)}(t-)\leq\pi(P)$.
\end{prp}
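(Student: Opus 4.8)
The plan is to translate each of the two order relations into an algebraic identity that a $*$-homomorphism manifestly preserves, namely one of the form (positive part)$\cdot$(projection)$=0$. First I would reduce to the unital case: replacing $A$ by $A+\mathbb{F}1_{\mathcal{B}(H)}$ and $\pi$ by the unital extension $\pi'$ described in the footnote changes neither the spectral families $E_S$, $E_{\pi(S)}$ (which are computed on $H$, respectively $H'$) nor the stated hypotheses and conclusions, so I may assume $1\in A$ and $\pi(1)=1$. This matters because the elements $(S-t)_+$ and $(t-S)_+$ appearing below involve the constant $t$, and hence the unit.

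The key step is the following pair of characterizations, valid for any self-adjoint $T$ and projection $Q$ on any Hilbert space $K$:
\[
Q\leq E_T(t)\iff (T-t)_+Q=0,\qquad E_T(t-)\leq Q\iff (t-T)_+(1-Q)=0.
\]
Both rest on the single standard spectral fact that the kernel of a positive functional-calculus element is the corresponding spectral subspace: $\ker(T-t)_+=\mathcal{R}(E_T(t))$ (the spectral subspace for $(-\infty,t]$) and $\ker(t-T)_+=\mathcal{R}(E_T^\perp(t-))$ (the spectral subspace for $[t,\infty)$). For the first equivalence, $(T-t)_+Q=0$ says exactly that $\mathcal{R}(Q)\subseteq\ker(T-t)_+=\mathcal{R}(E_T(t))$, i.e.\ $Q\leq E_T(t)$. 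For the second, $E_T(t-)\leq Q$ is equivalent to $1-Q\leq E_T^\perp(t-)$, i.e.\ $\mathcal{R}(1-Q)\subseteq\ker(t-T)_+$, i.e.\ $(t-T)_+(1-Q)=0$.

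With these in hand the proposition is immediate. Since $\pi$ is a $*$-homomorphism it is norm-decreasing and commutes with the continuous functional calculus of self-adjoint elements, so $\pi\big((S-t)_+\big)=(\pi(S)-t)_+$ and $\pi\big((t-S)_+\big)=(t-\pi(S))_+$; one sees this by approximating $x\mapsto(x\mp t)_\pm$ uniformly by polynomials on the compact set $\sigma(S)\supseteq\sigma(\pi(S))$ and using that $\pi$ preserves polynomials and the unit. Now if $P\leq E_S(t)$ then $(S-t)_+P=0$, so applying $\pi$ gives $(\pi(S)-t)_+\pi(P)=0$, which by the characterization applied on $H'$ to $\pi(S),\pi(P)$ means $\pi(P)\leq E_{\pi(S)}(t)$. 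Likewise, if $E_S(t-)\leq P$ then $(t-S)_+(1-P)=0$, hence $(t-\pi(S))_+(1-\pi(P))=0$, giving $E_{\pi(S)}(t-)\leq\pi(P)$.

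The only real content is the pair of characterizations in the second paragraph; everything after it is formal. I therefore expect the main obstacle to be pinning down the spectral fact $\ker(T-t)_+=\mathcal{R}(E_T(t))$ purely from the defining inequalities of \thref{sf}, rather than from the full Borel functional calculus. Note it cannot be extracted from the pointwise inequality $\langle Tv,v\rangle\leq t$ alone — a unit vector can satisfy this without lying in $\mathcal{R}(E_T(t))$ — so one genuinely needs that $E_T(t)$ reduces $T$, as guaranteed by the construction in \cite{a}; I would simply quote this as a standard property of the spectral family $E_T$.
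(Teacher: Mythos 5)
Your proof is correct, but it takes a genuinely different route from the paper's. The paper's own proof normalizes $S$ to a positive $T=(S+(||S||+|t|)1)/(t+||S||+|t|)$ so that $E_S(t)=E_T(1)$, uses the $T$-invariance of $\mathcal{R}(E_T(1))$ to get $||T^nE_T(1)||\leq1$ for all $n$, applies the contraction $\pi$, and then invokes Halmos's result \cite{h} that $E_{\pi(T)}(1)$ is the \emph{largest} projection $p$ with $||\pi(T)^np||\leq1$ for all $n$; the second implication is then deduced from the first via the duality $E_S(t-)\leq P\Leftrightarrow P^\perp\leq E_{-S}(-t)$. You instead encode each order relation as a single exact algebraic identity, $(S-t)_+P=0$ and $(t-S)_+P^\perp=0$ respectively, and push both through $\pi$ by functoriality of the continuous functional calculus, so no appeal to \cite{h} and no duality trick are needed, and the two implications are handled symmetrically. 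Your route is closest in spirit to the alternative argument the paper sketches immediately after the proposition (where $E_S(t)=\inf_n f_n(S)$ for continuous $f_n$ decreasing pointwise to $\chi_{(-\infty,t]}$, and $P\leq E_S(t)$ iff $P\leq f_n(S)$ for all $n$), but is tighter: one function and one identity replace a countable family of inequalities. In both your proof and the paper's, something beyond the defining inequalities of \thref{sf} must be imported from the construction in \cite{a}: for the paper, the invariance of $\mathcal{R}(E_T(1))$ (plus Halmos's characterization); for you, the fact $\ker(S-t)_+=\mathcal{R}(E_S(t))$, which you correctly flag, and which indeed follows from the reducing property together with the strict inequality on $\mathcal{R}(E^\perp_S(t))$ (a vector $v$ there with $(S-t)_+v=0$ would be a $t$-eigenvector of the restriction, contradicting $t<\langle Sv,v\rangle$). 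Your explicit reduction to the unital case, needed because $x\mapsto(x\mp t)_\pm$ does not vanish at $0$, is also sound and is a point the paper handles only through its footnote convention.
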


\begin{proof} Let $T=(S+(||S||+|t|)1)/(t+||S||+|t|)$, so $P\leq E_S(t)=E_T(1)$ (this equality follows easily from the uniqueness of the spectral family in \thref{sf}).  Note that $T$ is positive and, as $\mathcal{R}(E_T(1))$ is $T$-invariant (see \cite{a} p196 Proposition (1)), it follows from \thref{sf} that $||T^nE_T(1)||\leq1$, for all $n\in\omega$.  As $||\pi||=1$, we also have $||\pi(T^nP)||\leq1$.  But $\pi(T)$ is also positive so by \cite{h}, $E_{\pi(T)}(1)$ is in fact the largest projection $p$ such that $||\pi(T)^np||\leq1$, for all $n\in\omega$.  Hence $\pi(P)\leq E_{\pi(T)}(1)=E_{\pi(S)}(t)$.  Thus the first implication is proved, and simply note that the second implication follows from the first and the fact that $E_S(t-)\leq P$ is equivalent to $P^\perp(=1-P)\leq E_{-S}(-t)$. \end{proof}

Alternatively, in the proof of \thref{pispec} one can use the continuous functional calculus which, unlike spectral families, is well defined in the abstract C$^*$-algebra context.  Specifically one can note that $E_S(t)=\inf_n(f_n(S))$, where $f_n$ is a sequence of non-negative continuous functions on $\mathbb{R}$ decreasing pointwise to the characteristic function of $(-\infty,t]$.  Then we see that $P\leq E_S(t)$ is equivalent to the statement that $P\leq f_n(S)$, for all $n\in\omega$, giving $\pi(P)\leq\pi(f_n(S))=f_n(\pi(S))$, for all $n\in\omega$, which is in turn equivalent to $\pi(P)\leq E_{\pi(S)}(t)$.

We will also mention the spectrum, $\sigma(T)$, of $T\in\mathcal{B}(H)$, as given below.
\begin{equation}\label{spec}
\sigma(T)=\{\lambda\in\mathbb{F}:T-\lambda 1\textrm{ is not invertible in }\mathcal{B}(H)\}.
\end{equation}
The importance of the spectrum in this paper comes from its relation to spectral families of $S\in\mathcal{S}(\mathcal{B}(H))$ given below (see \cite{a} Theorem 7.22 (iv)).
\begin{equation}\label{specE}
\sigma(S)=\{t\in\mathbb{R}:\forall\epsilon>0(E_S(t+\epsilon)-E_S(t-\epsilon)\neq0)\}.
\end{equation}

It is well known that if $T\in A$ for some C$^*$-algebra $A\subseteq\mathcal{B}(H)$ containing the identity operator $1$ then $\mathcal{B}(H)$ may be replaced with $A$ in (\ref{spec}) above (see \cite{m} p3).  It follows that if $\pi:A\rightarrow B$ is a homomorphism from a C$^*$-algebra $A$ to a C$^*$-algebra $B$ then $\sigma(\pi(T))\subseteq\sigma(T)$, for all $T\in A$.  In the case $A=\mathcal{B}(H)$ and $B=\mathcal{C}(H)$, $\sigma(\pi(T))$ is known to be precisely the limit points of $\sigma(T)$ together with the eigenvalues of $T$ of infinite multiplicity, so long as $\mathbb{F}=\mathbb{C}$ or $T\in\mathcal{S}(A)$.

We shall also use the following elementary results.  For $S\in\mathcal{S}(A)$, $||S||=\max|\sigma(S)|$, which can be derived from \thref{sf}, (\ref{specE}) and the fact that $||S||=\sup_{||v||=1}|\langle Sv,v\rangle|$ (see \cite{a} Theorem 4.4 (b)) so, for $P,Q,\in\mathcal{P}(A)$,
\begin{equation}\label{||PQ||}
||PQ||^2=||PQ(PQ)^*||=||PQQP||=||PQP||=\max(\sigma(PQP)).
\end{equation}
Note that $0\in\sigma(TP)$, for any $T\in A$ and $P\in\mathcal{P}(A)\backslash\{1\}$.  Also, for any $S,T\in A$, we have $\sigma(ST)\backslash\{0\}=\sigma(TS)\backslash\{0\}$ (see \cite{m} Exercise 1.7.A.) which means, for $P,Q\in\mathcal{P}(A)$,
\begin{equation}\label{sigmaPQP}
\sigma(PQP)=\sigma(PPQ)=\sigma(PQ)=\sigma(QP)=\sigma(QQP)=\sigma(QPQ).
\end{equation}
For $S\in\mathcal{S}(A)$ and $P\in\mathcal{P}(A)$ with $SP=S=PS$, $E_{P-S}(t)=E^\perp_S(1-t-)+P^\perp$, for $t\in(0,1)$.  By (\ref{specE}), this implies that, for $Q\in\mathcal{P}(A)$,
\begin{equation}\label{sigmaPQperpP}
\sigma(PQ^\perp P)\cap(0,1)=\sigma(P-PQP)\cap(0,1)=1-\sigma(PQP)\cap(0,1).%=\{t\in(0,1):1-t\in\sigma(PQP)\}.
\end{equation}

\section{Quotients of C$^*$-algebras of Real Rank Zero}

As stated in the title, we will primarily be interested in C$^*$-algebras of the following type.

\begin{dfn}
A C$^*$-algebra $A$ has \emph{real rank zero} if $\mathcal{S}^{-1}(A)$, the collection of self-adjoint invertible elements of $A$ (with a unit adjoined, if $A$ has none), is dense in $\mathcal{S}(A)$.
\end{dfn}

\thref{pirrz} below follows from this definition and some elementary C$^*$-algebra theory.

\begin{prp}\thlabel{pirrz}
If a C$^*$-algebra $A$ has real rank zero and $\pi$ is a homomorphism onto another C$^*$-algebra $B$ then $B$ also has real rank zero.
\end{prp}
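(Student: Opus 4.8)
The plan is the standard lift--perturb--project argument. Fix a self-adjoint $b\in\mathcal{S}(B)$ and $\epsilon>0$; I want a self-adjoint invertible element of $B$ within $\epsilon$ of $b$. Since $\pi$ is onto, $b$ has a preimage $a_0\in A$, and replacing $a_0$ by $\tfrac{1}{2}(a_0+a_0^*)$ I may take the preimage $a$ to be self-adjoint, since $\pi$ commutes with the adjoint and $b=b^*$ give $\pi(\tfrac12(a_0+a_0^*))=\tfrac12(b+b^*)=b$. Real rank zero of $A$ now supplies a self-adjoint invertible $a'\in\mathcal{S}^{-1}(A)$ with $||a-a'||<\epsilon$. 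Projecting back, $\pi(a')$ is self-adjoint, and as $||\pi||\leq1$ we get $||b-\pi(a')||=||\pi(a-a')||\leq||a-a'||<\epsilon$; the only remaining point is that $\pi(a')$ is invertible, which holds because a unital homomorphism carries invertibles to invertibles (if $a'c=1=ca'$ then $\pi(a')\pi(c)=1=\pi(c)\pi(a')$).

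The delicate point is the bookkeeping around units, since both ``invertible'' and the definition of real rank zero are phrased relative to a unit. When $A$ is unital, $\pi(1_A)$ is a unit for the image $B$, so $B$ is unital with $1_B=\pi(1_A)$, and the argument above runs verbatim inside $A$ and $B$. In general I would first pass to the unitizations, extending $\pi$ to the unital homomorphism $\tilde\pi:\tilde A\to\tilde B$ given by $\tilde\pi(a+\lambda 1)=\pi(a)+\lambda 1$, exactly as in the construction described in the footnote to \thref{pispec}; this $\tilde\pi$ is again contractive, and recalling that $\mathcal{S}^{-1}(C)$ is by definition the set of self-adjoint elements of $C$ that are invertible in the (possibly adjoined) unitization, the approximant $a'$ lives in $\tilde A$, its image $\tilde\pi(a')$ is a self-adjoint element invertible in $\tilde B$, and the same estimate places it within $\epsilon$ of $b$. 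Thus $\mathcal{S}^{-1}(B)$ is dense in $\mathcal{S}(B)$, which is the assertion that $B$ has real rank zero.

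I expect the main (and essentially the only genuine) obstacle to be precisely this unit handling: ensuring that the lift, the notion of invertibility, the density requirement, and the extended map $\tilde\pi$ all take place inside one common unital algebra, and that the definition of real rank zero is correctly reconciled between $C$ and $\tilde C$ in the combinations where one of $A,B$ is unital and the other is not (where one invokes the standard fact that an algebra and its unitization have the same real rank). Once everything sits inside a common unital algebra, the analytic core --- lift a self-adjoint preimage, perturb it to an invertible one using real rank zero of $A$, and project back down --- is immediate, relying only on surjectivity of $\pi$, contractivity of homomorphisms, and preservation of invertibility.
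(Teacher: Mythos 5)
Your proof is correct; note that the paper itself offers no proof of this proposition, remarking only that it ``follows from this definition and some elementary C$^*$-algebra theory,'' and your lift--perturb--project argument (lift $b$ to a self-adjoint preimage, perturb it into $\mathcal{S}^{-1}(A)$ using real rank zero, push forward by the contractive surjection, which preserves self-adjointness and invertibility) is exactly that standard elementary argument. The unitization bookkeeping you flag, handled via the extension $\tilde\pi(a+\lambda 1)=\pi(a)+\lambda 1$ and the fact that an algebra and its unitization have the same real rank, is the right way to make it airtight, so there is no gap.
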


From now on, rather than applying the definition of real rank zero directly, we will make use of the following important consequence of having real rank zero.

\begin{thm}\thlabel{IP}
Let $H$ be a Hilbert space and let $A\subseteq\mathcal{B}(H)$ be a C$^*$-algebra of real rank zero.  For all $S\in\mathcal{S}(A)$ and $0<t<s\in\mathbb{R}$ there exists $P\in\mathcal{P}(A)$ such that $E^\perp_S(s)\leq P\leq E^\perp_S(t)$.
\end{thm}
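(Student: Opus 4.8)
The plan is to first strip away the global structure of $S$ by continuous functional calculus, reducing to a positive contraction whose spectral cuts sit exactly where I want them, and then to use real rank zero to manufacture a projection of $A$ inside the resulting ``band''. For the first step I would fix a nondecreasing continuous $f:\mathbb{R}\rightarrow[0,1]$ with $f\equiv0$ on $(-\infty,t]$, $f\equiv1$ on $[s,\infty)$ and $f>0$ on $(t,\infty)$, and set $a=f(S)\in\mathcal{S}(A)$, so $0\leq a\leq1$. Since $f$ vanishes exactly on $(-\infty,t]$, the support projection $E^\perp_a(0)$ of $a$ equals $E^\perp_S(t)$, and monotonicity of $f$ gives, for every $\lambda\in(0,1)$, the exact inequalities $E^\perp_S(s)\leq E^\perp_a(\lambda)\leq E^\perp_a(0)=E^\perp_S(t)$. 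Thus it suffices to produce a projection $P\in\mathcal{P}(A)$ with $E^\perp_a(\beta)\leq P\leq E^\perp_a(\alpha)$ for some $0<\alpha<\beta<1$; note that the hypothesis $0<t$ is precisely what keeps the entire construction on the positive side of the spectrum.

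The easy case is when $\sigma(a)$ omits some point $\lambda\in(0,1)$: then $\chi_{(\lambda,\infty)}$ is continuous on $\sigma(a)$, so $P:=\chi_{(\lambda,\infty)}(a)=E^\perp_a(\lambda)$ is a genuine projection lying in $A$ through the continuous functional calculus, and the exact inequalities come for free. The real content is therefore the case in which $\sigma(a)\cap(0,1)$ has no gaps, and here I would call on the definition of real rank zero directly: approximating $a-\lambda$ (for an interior $\lambda$) by a self-adjoint \emph{invertible} $b\in A$ produces, since $0\notin\sigma(b)$, a genuine projection $q:=\chi_{(0,\infty)}(b)\in\mathcal{P}(A)$, the natural candidate for $P$.

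The hard part will be upgrading this to the \emph{exact} two-sided inequality, and this is where I expect the main difficulty to lie. Spectral projections are not norm-continuous, so a single invertible approximation only places $q$ correctly up to an $O(\|b-(a-\lambda)\|)$ rotation of the relevant subspaces; this is fatal, because $P\leq E^\perp_S(t)$ demands that $P$ annihilate the spectral subspace of $S$ below $t$ \emph{exactly}, whereas a crude approximation always leaks a small nonzero component there (one sees this already in a $2\times2$ example). The way I would try to resolve it is to exploit the slack in the strict gap $t<s$ together with the flatness of $a$, which is identically $0$, respectively $1$, on the two extreme spectral subspaces, and the monotonicity of the functional calculus in the spirit of \thref{pispec}: work inside the hereditary subalgebra $\overline{aAa}$ and its complementary corner, each of which is again of real rank zero by a standard permanence property, and perform successive approximations whose corrections are supported in the shrinking ``middle'' region $\{\alpha<a<\beta\}$. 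Arranging these corrections to be norm-summable, their limit would be an honest projection $P\in A$ whose behaviour on the extreme subspaces matches $a$ exactly, delivering the required bounds $E^\perp_S(s)\leq P\leq E^\perp_S(t)$. Checking that this refinement converges in norm, so that the limit remains in $A$, without ever disturbing the already-correct extreme parts, is what I expect to be the crux of the whole proof.
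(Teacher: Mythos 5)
Your normalization and your easy case are fine: replacing $S$ by $a=f(S)$ and checking that each $E^\perp_a(\lambda)$, $\lambda\in[0,1)$, equals $E^\perp_S(c)$ for some $c\in[t,s)$ is a correct reduction, and when $\sigma(a)$ misses a point of $(0,1)$ the continuous functional calculus indeed hands you the required projection. But the proof stops exactly where the theorem begins. In the remaining case you correctly diagnose that a single invertible approximant $b\approx a-\lambda$ only yields a projection $\chi_{(0,\infty)}(b)$ whose spectral subspaces are rotated relative to those of $a$, so that neither exact inequality holds; your proposed repair (``successive approximations whose corrections are supported in the shrinking middle region $\{\alpha<a<\beta\}$, arranged to be norm-summable'') is only a plan, and you say so yourself (``what I expect to be the crux of the whole proof''). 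Moreover, the plan is circular as stated: producing a correction that is \emph{exactly} supported in a spectral band of $a$, so that it ``never disturbs the already-correct extreme parts,'' is itself an instance of the interpolation problem being proved, so each corrective step faces the same leakage you identified as fatal for the one-step argument. What is needed is a genuinely new ingredient, for instance the Brown--Pedersen permanence theory (real rank zero passes to hereditary subalgebras, which then have approximate units consisting of \emph{projections}) combined with a gluing argument that secures one inequality exactly at every stage (any projection in the hereditary subalgebra generated by $(a-\alpha)_+$ is automatically $\leq E^\perp_a(\alpha)$) while the other inequality is achieved in the norm limit. None of this is in your text, so as written you have proved the theorem only when $\sigma(f(S))$ has a gap in $(0,1)$.

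For comparison, the paper does not reprove this statement at all: it is Brown's interpolation theorem, quoted from \cite{g}, which establishes it when $H$ is the Hilbert space of the universal representation of $A$; the only argument the paper itself supplies is the transfer step, namely that by \thref{pispec} (applied to both halves of the sandwich, with room to spare between $t$ and $s$) interpolation with respect to the universal representation yields interpolation with respect to an arbitrary representation, faithful or not. So the substance missing from your proposal is precisely the substance the paper outsources to the literature, and the part you did prove corresponds to the trivial case that needs no real-rank-zero hypothesis at all.
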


For a proof of this result, see \cite{g}, at least for the specific case where $H$ is the Hilbert space coming from the universal representation of $A$.  For the general result simply note that, by \thref{pispec}, if the spectral families in \thref{IP} can be taken with respect to some faithful representation of $A$, like the universal representation, then they can actually be taken with respect to any representation, be it faithful or not.  Alternatively, one can use the fact that any representation $\pi$ of a C$^*$-algebra $A$ can be extended to a normal representation $\pi''$ of the entirety of its universal enveloping algebra $A^{**}$ onto $\pi[A]''$ (see \cite{r} Theorem 3.7.7).  Normality implies that spectral projections with respect to the universal representation push forward to the same spectral projections, i.e. normality gives us the middle equality in $\pi''(E_S(t))=\pi''(\inf_nf_n(S))=\inf_n\pi(f_n(S))=E_{\pi(S)}(t)$, for all $S\in\mathcal{S}(A)$ and $t\in\mathbb{R}$, where the $(f_n)$ are as in the remark after \thref{pispec}, from which the full generality of \thref{IP} again follows.

\begin{thm}\thlabel{Q<P}
If $\pi$ is a C$^*$-algebra homomorphism from $A$ onto $B$, $A$ has real rank zero, $q\in\mathcal{P}(B)$, $P\in\mathcal{P}(A)$ and $q\leq\pi(P)$ then there is $Q\in\mathcal{P}(A)$ with $\pi(Q)=q$ and $Q\leq P$.
\end{thm}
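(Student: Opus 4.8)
The plan is to lift $q$ to a self-adjoint element sitting inside the corner $PAP$ and then replace that element by a genuine projection using the spectral projection approximations guaranteed by \thref{IP}. First I would produce a convenient self-adjoint lift: since $\pi$ is onto, choose $b_0\in A$ with $\pi(b_0)=q$ and set $b=P\tfrac{b_0+b_0^*}{2}P$. Then $b$ is self-adjoint, lies in the corner $PAP$, and, since $q\leq\pi(P)$ forces $\pi(P)q\pi(P)=q$, satisfies $\pi(b)=q$. Applying the continuous functional calculus with a fixed continuous $f:\mathbb{R}\to[0,1]$ such that $f(0)=0$ and $f(1)=1$, I would put $a=f(b)$. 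Because $f(0)=0$ and $0\leq f\leq1$, the element $a$ again lies in $PAP$ and satisfies $0\leq a\leq P$; moreover $\pi(a)=f(q)=q$ since $\sigma(q)\subseteq\{0,1\}$. The key structural feature is that $a=PaP$, so $aP=Pa=a$ and hence the spectral projections of $a$ commute with $P$.

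Next I would apply \thref{IP} to $S=a$: fixing $0<t<s<1$, there is $Q\in\mathcal{P}(A)$ with $E^\perp_a(s)\leq Q\leq E^\perp_a(t)$. Since $a$ is positive with $a=PaP$, the range of $P^\perp$ lies in $\ker a=\mathcal{R}(E_a(0))$, so $E^\perp_a(t)\leq E^\perp_a(0)\leq P$ and therefore $Q\leq P$, which is the first of the two required conclusions.

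The remaining, and main, point is to verify that $\pi(Q)=q$ exactly, and this is where \thref{pispec} does the real work. The spectral family of the projection $q=\pi(a)$ is explicit: $E_q(r)=q^\perp$ for every $r\in[0,1)$ and $E_q(r-)=q^\perp$ for every $r\in(0,1]$. From $E^\perp_a(s)\leq Q$, i.e. $Q^\perp\leq E_a(s)$, the first implication of \thref{pispec} gives $\pi(Q)^\perp\leq E_{\pi(a)}(s)=E_q(s)=q^\perp$, so $q\leq\pi(Q)$. From $Q\leq E^\perp_a(t)$, i.e. $E_a(t-)\leq E_a(t)\leq Q^\perp$, the second implication of \thref{pispec} gives $q^\perp=E_q(t-)=E_{\pi(a)}(t-)\leq\pi(Q)^\perp$, so $\pi(Q)\leq q$. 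Hence $\pi(Q)=q$ together with $Q\leq P$, as required. The crux is precisely this last step: although the spectral projections $E_a(r)$ themselves need not lie in $A$, and the approximating $Q$ a priori satisfies only two one-sided bounds, the fact that $\pi$ collapses $\sigma(a)$ onto $\{0,1\}$ forces both bounds to pinch $\pi(Q)$ down to exactly $q$. The minor unitization bookkeeping needed for $P^\perp$ and $Q^\perp$ when $A$ is non-unital is handled as in the footnote to \thref{pispec}.
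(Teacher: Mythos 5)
Your proof is correct and follows essentially the same route as the paper's: lift $q$ to a self-adjoint element, compress it into the corner $PAP$, apply \thref{IP} to sandwich a projection $Q$ between two spectral projections, and use \thref{pispec} to pinch $\pi(Q)$ down to exactly $q$, with $Q\leq P$ coming from the corner structure. The only difference is your intermediate functional-calculus normalization $a=f(b)$, which the paper avoids by applying \thref{IP} directly to $PSP$ with thresholds $\delta$ and $1-\delta$ and deducing $Q\leq P$ from $\mathcal{N}(P)\subseteq\mathcal{N}(PSP)\subseteq\mathcal{R}(E_{PSP}(\delta-))$; this is a cosmetic simplification rather than a different argument.
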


\begin{proof} Take $T\in A$ such that $\pi(T)=q$.  Setting $S=(T+T^*)/2\in\mathcal{S}(A)$, we see that still $\pi(S)=(q+q^*)/2=q$.  Take positive $\delta<1/2$ and $Q\in\mathcal{P}(A)$ with $E^\perp_{PSP}(1-\delta)\leq Q\leq E^\perp_{PSP}(\delta-)$.\footnote{In fact, by \cite{i}, $A$ does not have to have real rank zero for this, so long as $d(PSP\pm1/2,\mathcal{S}^{-1}(A))<1/2-\delta$, where $d$ here is the (infimum) distance function.}  As $\pi(S)=q\leq\pi(P)$, $\pi(PSP)=q$ so, by \thref{pispec}, \[q=E^\perp_q(1-\delta)\leq\pi(Q)\leq E^\perp_q(\delta-)=q.\]  Also $\mathcal{N}(P)\subseteq\mathcal{N}(PSP)\subseteq\mathcal{R}(E_{PSP}(\delta-))$ so $P^\perp\leq E_{PSP}(\delta-)$ and hence $Q\leq E^\perp_{PSP}(\delta-)\leq P$. \end{proof}

In particular, when $P=1$ this shows that every projection in $B$ can be pulled back to a projection in $A$ if $A$ has real rank zero, which was proved in \cite{c} for $A=\mathcal{B}(H)$ and $B=\mathcal{C}(H)$.

We can use this to show further that a projection sandwiched between the image of two other projections can be pulled back to a projection still sandwiched between them.

\begin{cor}\thlabel{S<Q<P}
If $\pi$ is a C$^*$-algebra homomorphism from $A$ onto $B$, $A$ has real rank zero, $q\in\mathcal{P}(B)$, $P,R\in\mathcal{P}(A)$, $R\leq P$ and $\pi(R)\leq q\leq\pi(P)$ then $\exists Q\in\mathcal{P}(A)$ with $\pi(Q)=q$ and $R\leq Q\leq P$.
\end{cor}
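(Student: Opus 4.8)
The plan is to reduce this sandwiched statement to \thref{Q<P} by passing to the complementary corner $P-R$. Since $R\le P$ in $A$, the difference $P-R$ is again a projection in $\mathcal{P}(A)$, and since $\pi(R)\le q$ in $B$, the difference $q-\pi(R)$ is a projection in $\mathcal{P}(B)$. The remaining hypothesis $q\le\pi(P)$ then gives $q-\pi(R)\le\pi(P)-\pi(R)=\pi(P-R)$, so the hypotheses of \thref{Q<P} are met by the projections $P-R\in\mathcal{P}(A)$ and $q-\pi(R)\in\mathcal{P}(B)$.

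Applying \thref{Q<P}, I obtain $Q'\in\mathcal{P}(A)$ with $\pi(Q')=q-\pi(R)$ and $Q'\le P-R$. I would then set $Q=Q'+R$ and check that this is the projection we want. The claim $\pi(Q)=q$ is immediate, since $\pi(Q)=\pi(Q')+\pi(R)=(q-\pi(R))+\pi(R)=q$.

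The one step requiring care is verifying that $Q=Q'+R$ is genuinely a projection lying between $R$ and $P$, and this is where I expect the (minor) obstacle to be. The point is that $Q'$ and $R$ are orthogonal: from $Q'\le P-R$ we have $Q'=Q'(P-R)$, while $R\le P$ gives $(P-R)R=PR-R=0$, so that $Q'R=Q'(P-R)R=0$ and, taking adjoints, $RQ'=0$ as well. Orthogonality makes $Q'+R$ a self-adjoint idempotent, hence a projection, and then $QR=(Q'+R)R=R$ yields $R\le Q$, while $QP=Q'P+RP=Q'+R=Q$ (using $Q'P=Q'$, which follows from $Q'\le P-R\le P$) yields $Q\le P$. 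No spectral-family or real-rank-zero machinery is needed beyond the single invocation of \thref{Q<P}; the whole argument is this bookkeeping with orthogonal corners.
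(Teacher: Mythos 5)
Your proof is correct, and it reduces to \thref{Q<P} through the same corner $P-R$ that the paper uses, but from the complementary side. The paper applies \thref{Q<P} twice: first to pull $q$ back to some $S\leq P$, then to pull the projection $\pi(P-S)=\pi(P)-q$ back to some $T\leq P-R$, finally setting $Q=P-T$. You instead pull the complementary projection $q-\pi(R)$ back to $Q'\leq P-R$ and set $Q=Q'+R$; in effect your $Q'$ plays the role of $(P-R)-T$. What your version buys is economy: a single invocation of \thref{Q<P} (the paper's first invocation is only used to realize $\pi(P)-q$ as the image of a projection, which your argument shows is unnecessary), at the cost of the orthogonality bookkeeping $Q'R=0=RQ'$ needed to see that $Q'+R$ is a projection. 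What the paper's version buys is that its candidate $Q=P-T$ is trivially a projection, since $T\leq P$, so no orthogonality check is ever stated; the order relations $R\leq Q\leq P$ are then immediate from $T\leq P-R$. Your verifications --- that $q-\pi(R)$ is a projection dominated by $\pi(P-R)$, that $Q'R=0$ via $Q'=Q'(P-R)$ and $(P-R)R=0$, and the final inequalities $QR=R$ and $QP=Q$ --- are all complete and correct.
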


\begin{proof} Applying \thref{Q<P}, we have $S\in\mathcal{P}(A)$ with $S\leq P$ and $\pi(S)=q$.  Then apply \thref{Q<P} again to get $T\in\mathcal{P}(A)$ with $T\leq P-R$ and $\pi(T)=\pi(P-S)$.  Setting $Q=P-T$, we see that $\pi(Q)=\pi(S)=q$ and $R\leq Q\leq P$. \end{proof}

\begin{dfn}
Take a partial order $\mathbb{P}$ with minimum element $0$ and $\mathcal{P},\mathcal{Q}\subseteq\mathbb{P}$.  If $p\in\mathbb{P}\backslash\{0\}$ is minimal it is called an \emph{atom}.  If, whenever $r\leq p$ for all $p\in\mathcal{P}$, we have $r\leq q$ for all $q\in\mathcal{Q}$ then we say $\mathcal{P}$ is \emph{below} $\mathcal{Q}$. If $p<q$, for all $p\in\mathcal{P}$ and $q\in\mathcal{Q}$, then $(\mathcal{P},\mathcal{Q})$ is a \emph{pregap}.  We say $r\in\mathbb{P}$ \emph{interpolates} the pregap $(\mathcal{P},\mathcal{Q})$ if $p<r<q$, for all $p\in\mathcal{P}$ and $q\in\mathcal{Q}$.  A \emph{gap} is a pregap with no interpolating element.  A (pre)gap $((p_n),(q_n))$ for strictly increasing $(p_n)$ and strictly decreasing $(q_n)$ is called an $(\omega,\omega)$-(pre)gap.  We say $\mathbb{P}$ is \emph{$\sigma$-closed} if every decreasing $(p_n)\subseteq\mathbb{P}\backslash\{0\}$ has a non-zero lower bound.
\end{dfn}

\begin{prp}\thlabel{rrzww}
If $\pi$ is a C$^*$-algebra homomorphism from $A$ onto $B$, $A$ has real rank zero and $\mathcal{P}(A)$ has no $(\omega,\omega)$-gaps then neither does $\mathcal{P}(B)$.
\end{prp}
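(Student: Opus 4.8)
The plan is to prove the contrapositive: assuming $\mathcal{P}(B)$ contains an $(\omega,\omega)$-gap $((p_n),(q_n))$, I will pull it back to an $(\omega,\omega)$-gap in $\mathcal{P}(A)$. The pullback tools available are \thref{Q<P} and, crucially, its refinement \thref{S<Q<P}, which lets me realise a preimage of a projection of $B$ while keeping it sandwiched between two prescribed projections of $A$. This sandwiching is exactly what is needed to keep the two pulled-back sequences comparable.

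First I would build, by recursion on $n$, projections $P_1<P_2<\cdots$ and $\cdots<Q_2<Q_1$ in $\mathcal{P}(A)$ with $\pi(P_n)=p_n$ and $\pi(Q_n)=q_n$, maintaining at each finite stage the single chain $P_1<\cdots<P_n<Q_n<\cdots<Q_1$. To start, pull $q_1$ back to some $Q_1\in\mathcal{P}(A)$ via \thref{Q<P} with $P=1$ (passing to the unitisation and extending $\pi$ if $A$ is non-unital, as in the footnote to \thref{pispec}), and then pull $p_1$ back to some $P_1\le Q_1$ using $p_1<q_1=\pi(Q_1)$; since $\pi(P_1)=p_1\neq q_1=\pi(Q_1)$ we automatically get $P_1<Q_1$. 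Given the chain up to index $n$, I interleave two applications of \thref{S<Q<P}: first, using $p_n<q_{n+1}<q_n$, insert $Q_{n+1}$ with $P_n\le Q_{n+1}\le Q_n$ and $\pi(Q_{n+1})=q_{n+1}$; second, using $p_n<p_{n+1}<q_{n+1}$, insert $P_{n+1}$ with $P_n\le P_{n+1}\le Q_{n+1}$ and $\pi(P_{n+1})=p_{n+1}$. In each case the new projection has $\pi$-image strictly between the images of the two endpoints, so the inequalities are automatically strict and the chain invariant is preserved. Combining $P_n<Q_{n+1}$ with the monotonicity of both chains then gives $P_n<Q_m$ for all $n,m$, so $((P_n),(Q_n))$ is an $(\omega,\omega)$-pregap in $\mathcal{P}(A)$.

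It remains to see this pregap is a gap. Suppose $R\in\mathcal{P}(A)$ interpolates it, so $P_n<R<Q_m$ for all $n,m$. Applying the order-preserving $\pi$ yields only the non-strict $p_n\le\pi(R)\le q_m$, so the real work is to upgrade these to strict inequalities. Here I would exploit the strict monotonicity of the original sequences by comparing with the next index: from $P_{n+1}\le R$ we get $p_{n+1}\le\pi(R)$, so $\pi(R)=p_n$ would force $p_{n+1}\le p_n$, contradicting $p_n<p_{n+1}$; hence $\pi(R)\neq p_n$ and therefore $p_n<\pi(R)$. Symmetrically, $R\le Q_{m+1}$ gives $\pi(R)\le q_{m+1}$, which rules out $\pi(R)=q_m$ and yields $\pi(R)<q_m$. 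Thus $\pi(R)$ strictly interpolates $((p_n),(q_n))$ in $\mathcal{P}(B)$, contradicting that it is a gap. Hence no such $R$ exists and $((P_n),(Q_n))$ is the desired $(\omega,\omega)$-gap in $\mathcal{P}(A)$.

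The main obstacle is precisely the observation in the last paragraph: because a homomorphism need not preserve strict order, naively pushing an interpolant of the pulled-back pregap through $\pi$ produces only weak inequalities, and one must use the strictness of the increasing and decreasing sequences (via the $(n+1)$-th terms) to recover genuine strict interpolation in $B$. The construction itself is otherwise a routine interleaved recursion resting on \thref{S<Q<P}, the only minor care being the unit required for the very first pullback of $q_1$.
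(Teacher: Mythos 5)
Your proof is correct and, modulo the contrapositive packaging, is essentially the paper's own argument: the paper likewise pulls the $(\omega,\omega)$-pregap back through $\pi$ by alternating applications of \thref{S<Q<P}, keeping each new pullback sandwiched between the pullbacks already chosen, and then pushes an interpolant of the resulting pregap in $\mathcal{P}(A)$ back down to $\mathcal{P}(B)$. The only real difference is that you spell out the strictness upgrade (using $p_{n+1}\leq\pi(R)$ to rule out $\pi(R)=p_n$), a step the paper leaves implicit in the phrase ``from which it follows that $\pi(P)$ interpolates $((p_n),(q_n))$''.
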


\begin{proof} Given an $(\omega,\omega)$-pregap $((p_n),(q_n))$ in $\mathcal{P}(B)$, we may alternately choose $\pi$-pullbacks $(P_n)$ and $(Q_n)$ for $(p_n)$ and $(q_n)$ respectively, using \thref{S<Q<P} at each stage to ensure each $\pi$-pullback lies in between the $\pi$-pullbacks already chosen.  Then $((P_n),(Q_n))$ is an $(\omega,\omega)$-pregap and can thus be interpolated by some $P\in\mathcal{P}(A)$, from which it follows that $\pi(P)$ interpolates $((p_n),(q_n))$. \end{proof}

\begin{cor}\thlabel{pivn}
If $\pi$ is a C$^*$-algebra homomorphism from $A$ onto $B$ and $A$ is a von Neumann algebra then $\mathcal{P}(B)$ has no $(\omega,\omega)$-gaps.
\end{cor}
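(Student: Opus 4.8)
The plan is to derive this corollary directly from \thref{rrzww}, whose hypotheses are that the domain algebra have real rank zero and that its projection poset contain no $(\omega,\omega)$-gaps. Since $A$ is a von Neumann algebra, I would verify both hypotheses in turn and then invoke \thref{rrzww} to conclude.

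For the first hypothesis I would recall the standard fact that every von Neumann algebra has real rank zero. The point is that the spectral projections of any $S\in\mathcal{S}(A)$ already lie in $A$, so given $\epsilon>0$ one may use the Borel functional calculus supported on these projections to shift the part of the spectrum of $S$ lying in $(-\epsilon,\epsilon)$ away from $0$, producing a self-adjoint invertible element within $\epsilon$ of $S$. Hence $\mathcal{S}^{-1}(A)$ is dense in $\mathcal{S}(A)$, as required.

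For the second hypothesis I would use that $\mathcal{P}(A)$ is a complete lattice when $A$ is a von Neumann algebra. Given an $(\omega,\omega)$-pregap $((P_n),(Q_n))$, with $(P_n)$ strictly increasing, $(Q_n)$ strictly decreasing and $P_n<Q_m$ for all $n,m$, I would take $P=\bigvee_n P_n$, which exists by completeness. Each $Q_m$ is an upper bound of $(P_n)$, so $P\leq Q_m$, while $P\geq P_{n+1}>P_n$ gives $P_n<P$; strict decrease of $(Q_n)$ then yields $P\leq Q_{m+1}<Q_m$, so $P<Q_m$. Thus $P$ interpolates the pregap, so $\mathcal{P}(A)$ has no $(\omega,\omega)$-gaps.

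With both hypotheses of \thref{rrzww} established, the conclusion that $\mathcal{P}(B)$ has no $(\omega,\omega)$-gaps is immediate. I do not anticipate a genuine obstacle: the only step requiring a moment's thought is the verification of real rank zero for von Neumann algebras, and even that is routine, the remainder being a direct consequence of lattice completeness together with \thref{rrzww}.
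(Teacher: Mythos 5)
Your proof is correct and follows essentially the same route as the paper: reduce via \thref{rrzww} to showing $\mathcal{P}(A)$ has no $(\omega,\omega)$-gaps, then interpolate any pregap using the lattice structure of $\mathcal{P}(A)$ for a von Neumann algebra $A$ (the paper uses the projection onto the closed union of the bottom half, which is exactly your $\bigvee_n P_n$, or alternatively the intersection of the top half). The only difference is that you spell out the standard fact that von Neumann algebras have real rank zero, which the paper treats as known.
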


\begin{proof} By \thref{rrzww}, it is enough to prove that $\mathcal{P}(A)$ does not have $(\omega,\omega)$-gaps.  But an $(\omega,\omega)$-pregap $(\mathcal{P},\mathcal{Q})$ in $\mathcal{P}(A)$ corresponds to an $(\omega,\omega)$-pregap of closed subspaces, and both the intersection of the top half and the closed union of the bottom half of any such pregap will interpolate it.  The projection onto either of these subspaces will then be in $\mathcal{P}(A)$ because $A$ is a von Neumann algebra. \end{proof}

Assume we have a C$^*$-algebra $A$, $P\in\mathcal{P}(A)$ and decreasing $(P_n)\subseteq\mathcal{P}(A)$ with $||P^\perp P_n||\rightarrow0$.  Then, for any $Q\in\mathcal{P}(A)$ with $Q\leq P_n$, for all $n\in\omega$, we have \[||P^\perp Q||=||P^\perp(P_n+P^\perp_n)Q||\leq||P^\perp P_n||+||P^\perp_nQ||=||P^\perp P_n||\rightarrow0,\] and hence $Q\leq P$, i.e. $(P_n)$ is below $\{P\}$.  For \thref{cglb}, and the results that follow from it, we will need the converse of this to hold.

\begin{dfn}\thlabel{omegap}
A C$^*$-algebra $A$ has the \emph{$\omega$-property} if $||P^\perp P_n||\rightarrow0$ whenever $(P_n)\subseteq\mathcal{P}(A)$ is a decreasing sequence below $P\in\mathcal{P}(A)$.
\end{dfn}

Note that if $A$ is a C$^*$-algebra with the $\omega$-property then $\mathcal{P}(A)$ is $\sigma$-closed.  For, if not, we would have decreasing $(P_n)$ with no non-zero lower bound, i.e. $(P_n)$ would be below $0$ even though $||0^\perp P_n||=||1P_n||=||P_n||=1$, for all $n\in\omega$.

We point out here that the $\omega$-property is not necessarily invariant under homomorphisms (see the next paragraph).  Even if it were, this would not help us prove that $\mathcal{C}(H)$ has the $\omega$-property, as $\mathcal{B}(H)$ does not have the $\omega$-property for infinite dimensional Hilbert spaces $H$.  In fact, so long as $H$ is infinite dimensional, we may take orthogonal $(P_n)\subseteq\mathcal{P}(\mathcal{B}(H))\backslash\{0\}$ such that $H=\overline{\sum\mathcal{R}(P_n)}$ (for a collection of subspaces $\mathcal{V}$ of $H$, $\sum\mathcal{V}$ denotes the (possibly not closed) linear span of subspaces in $\mathcal{V}$, i.e. $\sum\mathcal{V}=\bigcup_{n\in\omega}\{\sum_{k=0}^n v_k:\forall k\leq n(v_k\in\bigcup\mathcal{V})\}$).  Then, setting $Q_n=(\sum_{k=0}^nP_k)^\perp$, for each $n\in\omega$, we see that $(Q_n)$ has no non-zero lower bound and hence $\mathcal{B}(H)$ is not even $\sigma$-closed.

However, it is not difficult to prove directly that $\mathcal{C}(H)$ has the $\omega$-property, as shown in \thref{C(H)} below.  It is also not difficult to find a homomorphism from $\mathcal{C}(H)$ to another C$^*$-algebra without the $\omega$-property, so long as $H$ has uncountable Hilbert dimension.  Specifically, let $\mathcal{C}^\infty(H)=\mathcal{B}(H)/\mathcal{K}^\infty(H)$, where $\mathcal{K}^\infty(H)=\{T\in\mathcal{B}(H):\mathcal{R}(T)\textrm{ is separable}\}$.  As $\mathcal{K}(H)\subseteq\mathcal{K}^\infty(H)$, the canonical homomorphism $\pi$ from $\mathcal{B}(H)$ to $\mathcal{C}^\infty(H)$ induces a canonical homomorphism from $\mathcal{C}(H)$ to $\mathcal{C}^\infty(H)$.  But if $(P_n)$ and $(Q_n)$ are as in the previous paragraph, this time with $\mathcal{R}(P_n)$ non-separable, for each $n\in\omega$, then $\pi(Q_n)$ has no non-zero lower bound in $\mathcal{C}^\infty(H)$ (because if we have $Q\in\mathcal{P}(\mathcal{B}(H))$ such that $\pi(Q)\leq\pi(Q_n)$, for all $n\in\omega$, then $\mathcal{R}(QP_n)$ is separable, for all $n\in\omega$, and hence $\mathcal{R}(Q)\subseteq\overline{\sum\mathcal{R}(QP_n)}$ is also separable, i.e. $\pi(Q)=0$).

We show in \thref{C(H)} below that $\mathcal{C}(H)$ has the properties relevant to this paper.  With the exception of the $\omega$-property, which as far as we know has not been defined before, these properties of $\mathcal{C}(H)$ are well known.  For example, it is shown in \cite{l} that $\mathcal{C}(H)$ has real rank zero.  In fact, it follows directly from \thref{pispec} that the spectral family approximations we require always exist in quotients of von Neumann algebras.  Indeed, the approximations are even better in this case, as we see that $E^\perp_{\pi(S)}(t)\leq\pi(E^\perp_S(t))\leq E^\perp_{\pi(S)}(t-)$, whenever $\pi$ is a homomorphism from a von Neumann algebra $A$ to a (necessarily real rank zero) C$^*$-algebra $B$, $S\in\mathcal{S}(A)$ and $t\in\mathbb{R}$.  In other words, instead of only knowing that, whenever $s\in(0,t)$, there exists $P\in\mathcal{P}(B)$ such that $E^\perp_{\pi(S)}(t)\leq P\leq E^\perp_{\pi(S)}(s)$, we in fact have just one $P$, namely $\pi(E^\perp_S(t))$, for which $E^\perp_{\pi(S)}(t)\leq P\leq E^\perp_{\pi(S)}(s)$ for all $s\in(0,t)$.  Such C$^*$-algebras might be considered as the non-commutative analogs of extremally disconnected topological spaces, just as real rank zero C$^*$-algebras are considered as the non-commutative analogs of totally disconnected spaces.

\begin{thm}\thlabel{C(H)}
If $H$ is an infinite dimensional Hilbert space then $\mathcal{C}(H)$ has real rank zero and the $\omega$-property, and $\mathcal{P}(\mathcal{C}(H))$ has no atoms or $(\omega,\omega)$-gaps.
\end{thm}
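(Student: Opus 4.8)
The statement bundles four assertions, and I expect three of them to fall out quickly from results already in hand, leaving the $\omega$-property as the real work. That $\mathcal{C}(H)$ has real rank zero is \cite{l}; alternatively, since $\mathcal{B}(H)$ is a von Neumann algebra one truncates the spectrum away from $0$ by Borel functional calculus to see its self-adjoint invertibles are dense, so $\mathcal{B}(H)$ has real rank zero and \thref{pirrz} passes this to the quotient $\mathcal{C}(H)$. For the absence of $(\omega,\omega)$-gaps I would simply note that $\mathcal{C}(H)=\mathcal{B}(H)/\mathcal{K}(H)$ is a quotient of the von Neumann algebra $\mathcal{B}(H)$ and invoke \thref{pivn} directly. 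For the absence of atoms, given $0\neq p\in\mathcal{P}(\mathcal{C}(H))$ I would lift to $P\in\mathcal{P}(\mathcal{B}(H))$ with $\pi(P)=p$ via \thref{Q<P}; since $p\neq0$ the range $\mathcal{R}(P)$ is infinite dimensional, hence splits as an orthogonal sum $V_1\oplus V_2$ of two infinite dimensional subspaces, and the projection $Q$ onto $V_1$ satisfies $0\neq\pi(Q)<p$ because both $Q$ and $P-Q$ are infinite rank, so $p$ is not minimal.

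The plan for the $\omega$-property is to argue the contrapositive. Suppose $(p_n)\subseteq\mathcal{P}(\mathcal{C}(H))$ is decreasing and below $p$, yet $\|p^\perp p_n\|\not\to0$. Since $(p_n)$ is decreasing the scalars $\|p^\perp p_n\|$ are decreasing, so there is $c>0$ with $\|p^\perp p_n\|\geq c$ for all $n$. Using \thref{Q<P} repeatedly I would lift $(p_n)$ to a decreasing sequence $(P_n)\subseteq\mathcal{P}(\mathcal{B}(H))$ with $\pi(P_n)=p_n$, and fix $P$ with $\pi(P)=p$. The aim is to manufacture a single nonzero projection $r\in\mathcal{P}(\mathcal{C}(H))$ that is a lower bound of every $p_n$ but fails $r\leq p$, contradicting that $(p_n)$ is below $p$.

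The key geometric input comes from the essential spectrum. Writing $S_n=P_nP^\perp P_n\geq0$, the identity $\|p^\perp p_n\|^2=\|\pi(S_n)\|=\max\sigma(\pi(S_n))$ (cf.\ (\ref{||PQ||})), together with the description of $\sigma(\pi(S_n))$ as the essential spectrum of $S_n$ (limit points of $\sigma(S_n)$ plus infinite-multiplicity eigenvalues), shows the essential spectrum of $S_n$ reaches above $c^2$. Fixing $\epsilon=c^2/2$, it follows from (\ref{specE}) that the spectral projection $G_n:=E^\perp_{S_n}(c^2-\epsilon)$ is infinite rank; moreover $G_n\leq P_n$ because $\mathcal{R}(S_n)\subseteq\mathcal{R}(P_n)$ and $c^2-\epsilon\geq0$, and by \thref{sf} every unit $w\in\mathcal{R}(G_n)\subseteq\mathcal{R}(P_n)$ satisfies $\|P^\perp w\|^2=\langle S_nw,w\rangle>c^2-\epsilon$. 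Thus each $\mathcal{R}(P_n)$ contains an infinite dimensional subspace of unit vectors staying a fixed distance $c'=\sqrt{c^2-\epsilon}$ away from $\mathcal{R}(P)$, with $\epsilon$ (hence $c'$) independent of $n$.

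Finally I would diagonalize: choosing inductively unit vectors $v_k\in\mathcal{R}(G_k)\subseteq\mathcal{R}(P_k)$ with $v_k\perp v_1,\dots,v_{k-1}$ (possible since each $\mathcal{R}(G_k)$ is infinite dimensional), let $R$ be the projection onto $\overline{\mathrm{span}}(v_k)$ and set $r=\pi(R)$. As $(v_k)$ is orthonormal, $R$ is infinite rank and $r\neq0$. Because $v_k\in\mathcal{R}(P_n)$ whenever $k\geq n$, the operator $P_n^\perp R$ has finite rank, so $p_n^\perp r=0$, i.e.\ $r\leq p_n$ for every $n$; on the other hand $\|P^\perp Rv_k\|=\|P^\perp v_k\|\geq c'$ while $v_k\to0$ weakly, so $P^\perp R$ is non-compact, $p^\perp r\neq0$ and $r\not\leq p$, the desired contradiction. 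The main obstacle throughout is precisely this $\omega$-property: the other three claims reduce to earlier results or an easy splitting, whereas here the content is converting a uniform lower bound on the essential norms $\|p^\perp p_n\|$ into uniformly infinite dimensional witness subspaces $\mathcal{R}(G_n)$ and then diagonalizing them into one projection $R$ that is simultaneously an essential lower bound of $(p_n)$ and essentially disjoint from $p$ — and keeping $c'$ independent of $n$ is what makes the closing weak-limit argument go through.
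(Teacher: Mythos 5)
Your proposal is correct and is essentially the paper's own proof: the first three assertions are handled exactly as in the paper (real rank zero via \thref{pirrz}, no $(\omega,\omega)$-gaps via \thref{pivn}, no atoms by splitting an infinite dimensional range), and your $\omega$-property argument is the same diagonalization the paper uses --- lift $(p_n)$ by \thref{Q<P} to a decreasing sequence in $\mathcal{P}(\mathcal{B}(H))$, extract an orthonormal sequence $(v_k)$ with $v_k\in\mathcal{R}(P_k)$ and $\|P^\perp v_k\|$ uniformly bounded below, and play weak-null convergence of $(v_k)$ against compactness to show the projection onto $\overline{\mathrm{span}}(v_k)$ maps to a lower bound of the $p_n$ that is not below $p$. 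Your only deviation is routing the recursive choice of the $v_k$ through the infinite-rank spectral projections $G_n$ of $P_nP^\perp P_n$, which simply makes explicit the choice that the paper asserts directly (via stability of the essential norm under finite-rank perturbations).
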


\begin{proof} The first statement follows from \thref{pirrz} and the last follows from \thref{pivn}.  To see that $\mathcal{P}(\mathcal{C}(H))$ has no atoms simply note that $\pi(P)\neq 0$ if and only if $\mathcal{R}(P)$ is infinite dimensional, and every infinite dimensional closed subspace contains a closed infinite dimensional infinite codimensional subspace. To see that $\mathcal{C}(H)$ has the $\omega$-property, take $p,(p_n)\subseteq\mathcal{P}(\mathcal{C}(H))$ with $(p_n)$ strictly decreasing and $||p^\perp p_n||>\epsilon>0$, for all $n\in\omega$.  By \thref{Q<P}, we can find $P,(P_n)\subseteq\mathcal{P}(\mathcal{B}(H))$ such that $(P_n)$ is decreasing, $\pi(P)=p$ and $\pi(P_n)=p_n$, for all $n\in\omega$.  Then we may recursively choose an orthonormal sequence $(v_n)$ (i.e. a sequence of orthogonal unit vectors) such that $v_n\in\mathcal{R}(P_n)$ and $||P^\perp v_n||>\epsilon$, for all $n\in\omega$.  Orthonormality implies that $(v_n)$ converges weakly to $0$, and hence $(Kv_n)$ converges in the norm topology to $0$, for all $K\in\mathcal{K}(H)$ (see \cite{a} Theorem 6.3).  Thus, for all such $K$, $||P^\perp Q-K||\geq\lim\sup||P^\perp v_n||$, where $Q$ is the projection onto $\overline{\mathrm{span}}(v_n)$.  This means $||p^\perp\pi(Q)||\geq\epsilon$ and hence $\pi(Q)\nleq p$.  But also the codimension of $\mathcal{R}(Q)\cap\mathcal{R}(P_n)$ in $\mathcal{R}(Q)$ is finite$(=n)$, for all $n\in\omega$, and hence $\pi(Q)\leq p_n$, for all $n\in\omega$. \end{proof}

\section{Lower Bounds (Finite Collections)}\label{lb}

In this section we take a C$^*$-algebra $A$, faithfully represented on a Hilbert space $H$ (so $A\subseteq\mathcal{B}(H)$), and investigate the lower bounds of subsets of $\mathcal{P}(A)$.  For $\mathcal{P}\subseteq\mathcal{P}(\mathcal{B}(H))$, $\bigwedge\mathcal{P}$ denotes the greatest lower bound of $\mathcal{P}$ in $\mathcal{P}(\mathcal{B}(H))$, so $\mathcal{R}(\bigwedge\mathcal{P})=\bigcap_{P\in\mathcal{P}}\mathcal{R}(P)$.  Also $\bigvee\mathcal{P}$ denotes the least upper bound of $\mathcal{P}$ in $\mathcal{P}(\mathcal{B}(H))$, so $\mathcal{R}(\bigvee\mathcal{P})=\overline{\sum_{P\in\mathcal{P}}\mathcal{R}(P)}$.

We first prove one general theorem, using the spectrum to characterize the points at which a (algebraic) homomorphism is a lattice homomorphism.  This might appear a bit intimidating at first, but afterwards we give examples and applications for a number of important special cases.

\begin{thm}\thlabel{finglbpi}
Let $\pi$ be a $C^*$-algebra homomorphism from $A\subseteq\mathcal{B}(H)$ to $B\subseteq\mathcal{B}(H')$, take $P_0,\ldots,P_n\in\mathcal{P}(A)$ and set $T=P_0\ldots P_n$, $P=P_0\wedge\ldots\wedge P_n$ and $p=\pi(P_0)\wedge\ldots\wedge\pi(P_n)$.  If $\sup(\sigma(T^*T)\backslash\{1\})<1$ then $P\in A$ and $\pi(P)=p$.  Further assume that $A$ has real rank zero, $\mathcal{P}(B)$ is $\sigma$-closed and $\pi^{-1}[\{0\}]\subseteq\mathcal{K}(H)$.  If $\pi(R)$ is a g.l.b. of $\pi(P_0),\ldots,\pi(P_n)$ in $\mathcal{P}(B)$, for some $R\in\mathcal{P}(A)$ below $P_0,\ldots,P_n$, then $\sup(\sigma(T^*T)\backslash\{1\})<1$.
\end{thm}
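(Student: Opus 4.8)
For the forward implication I would write $S=T^*T$ and first record the spectral picture of $S$ near $1$. A chain of contraction estimates shows that, for a unit vector $v$, the equality $\langle Sv,v\rangle=\|Tv\|^2=1$ forces $P_nv=v$, then $P_{n-1}v=v$, and so on, so the eigenspace of $S$ for the eigenvalue $1$ is exactly $\mathcal R(P_0)\cap\dots\cap\mathcal R(P_n)=\mathcal R(P)$; equivalently $E^\perp_S(1-)=P$. If $\sup(\sigma(S)\setminus\{1\})=s<1$, then $1$ is isolated in $\sigma(S)$, so the indicator of $\{1\}$ agrees on $\sigma(S)\subseteq[0,s]\cup\{1\}$ with a continuous function $f$ vanishing on $[0,s]$, whence $P=f(S)\in A$ because $S\in A$. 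Since $\sigma(\pi(S))\subseteq\sigma(S)\subseteq[0,s]\cup\{1\}$, the same $f$ computes $\pi(P)=f(\pi(S))$ as the eigenspace projection of $\pi(S)=\pi(T)^*\pi(T)$ for the eigenvalue $1$, which by the identical chain argument in $\mathcal B(H')$ equals $p$.

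For the converse (the substantive half) I would assume $\pi(R)$ is a g.l.b. with $R\le P_i$ for all $i$, and suppose toward a contradiction that $\sup(\sigma(S)\setminus\{1\})=1$. The goal is to produce a non-zero lower bound of $\{\pi(P_i)\}$ that is orthogonal to $\pi(R)$, contradicting the g.l.b. property. From $R\le P_i$ one gets $TR=R=RT^*$, hence $SR=R=RS$, so $R$ commutes with $S$ and $S=R\oplus R^\perp SR^\perp$. Since $S$ restricts to the identity on $\mathcal R(R)$, every spectral value of $S$ in $[0,1)$ already occurs in $\sigma(S')$ for $S':=R^\perp SR^\perp$; the assumption that $\sigma(S)\setminus\{1\}$ accumulates at $1$ then makes $1$ a limit point of $\sigma(S')$, so $1\in\sigma_{\mathrm{ess}}(S')$.

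The crux is to transport this into $B$, and this is the step I expect to be the main obstacle. Here I would use the hypothesis $\pi^{-1}[\{0\}]\subseteq\mathcal K(H)$: the restriction to $A$ of the Calkin quotient $c:\mathcal B(H)\to\mathcal C(H)$ has kernel $A\cap\mathcal K(H)\supseteq\ker\pi$, so it factors as $c|_A=\psi\circ\pi$ for a homomorphism $\psi$. Writing $S''=\pi(S')=\pi(R)^\perp\pi(S)\pi(R)^\perp\in B$, we get $\psi(S'')=c(S')$, and therefore $1\in\sigma_{\mathrm{ess}}(S')=\sigma(\psi(S''))\subseteq\sigma(S'')$ by the spectrum-under-homomorphism containment. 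Turning the merely \emph{approximate} near-$1$ spectrum of $S$ into an honest spectral value $1$ of $S''$ in $B$ is exactly where the compact-kernel hypothesis is indispensable: without it the relevant directions could be killed by $\pi$. Note that $1\in\sigma_{\mathrm{ess}}(S)$ alone would not suffice, since $S=R\oplus S'$ could push that $1$ entirely into the $\pi(R)$ block; it is essential that the accumulating spectrum lives in $\mathcal R(R)^\perp$.

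With $1\in\sigma(S'')$, $S''\ge0$ and $\|S''\|=1$ in hand, I would finish by invoking real rank zero of $\pi[A]$ (via \thref{pirrz}) together with \thref{IP} to choose, for some $0<t_1<s_1<t_2<s_2<\dots\uparrow1$, projections $q_k\in\mathcal P(B)$ with $E^\perp_{S''}(s_k)\le q_k\le E^\perp_{S''}(t_k)$; the interleaving makes $(q_k)$ decreasing, and each $q_k\neq0$ because $\|S''\|=1>s_k$. Since $\mathcal P(B)$ is $\sigma$-closed, $(q_k)$ has a non-zero lower bound $q$. For a unit $w\in\mathcal R(q)$ the inequalities defining $E_{S''}$ give $\langle S''w,w\rangle>t_k$ for all $k$, hence $\langle\pi(S)w,w\rangle=\|\pi(T)w\|^2=1$, and the chain argument places $w$ in every $\mathcal R(\pi(P_i))$, i.e. $q\le\pi(P_i)$; meanwhile $S''$ annihilates $\mathcal R(\pi(R))$, so $E^\perp_{S''}(t_k)\le\pi(R)^\perp$ and thus $q\le\pi(R)^\perp$. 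Then $q\neq0$ is a lower bound of $\{\pi(P_i)\}$ with $q\not\le\pi(R)$, contradicting that $\pi(R)$ is a g.l.b. and completing the proof.
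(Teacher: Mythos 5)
Your proof is correct, but the converse half takes a genuinely different route from the paper's. The first half is essentially the paper's own argument: both identify $P=E^\perp_{T^*T}(1-)$ with $f(T^*T)$ for a continuous $f$ vanishing below $\sup(\sigma(T^*T)\setminus\{1\})$ and equal to $1$ at $1$, and both use $\sigma(\pi(T^*T))\subseteq\sigma(T^*T)$ to push this through $\pi$. For the converse, the paper works upstairs: taking $t_m\uparrow1$ it applies \thref{IP} in $A$ to pick $Q_m\in\mathcal{P}(A)$ with $E^\perp_{T^*T}(t_{m+1})\leq Q_m\leq E^\perp_{T^*T}(t_m)$, so that $\bigwedge Q_m=P$; if $\sup(\sigma(T^*T)\setminus\{1\})=1$ then each $Q_m-P$, hence each $Q_m-R$, has infinite rank, and the hypothesis $\pi^{-1}[\{0\}]\subseteq\mathcal{K}(H)$ is invoked only to conclude that $\pi(Q_m)>\pi(R)$ (an infinite-rank projection is not compact); $\sigma$-closedness of $\mathcal{P}(B)$ then bounds the decreasing sequence $(\pi(Q_m)-\pi(R))$ below by some non-zero $q$, and $\pi(R)+q$ is a lower bound of the $\pi(P_i)$ strictly above $\pi(R)$. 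You instead transport the spectral data into $B$ first: the kernel hypothesis is what lets you factor $c|_A=\psi\circ\pi$ through the Calkin quotient $c$, so that the Weyl-type fact that limit points of the spectrum of a self-adjoint operator lie in its essential spectrum (a fact the paper itself records in \S2) converts the accumulation at $1$ of the spectrum of $S'=T^*T-R$ into $1\in\sigma(\pi(S'))$; you then interpolate downstairs, applying \thref{IP} to $\pi[A]$ (which has real rank zero by \thref{pirrz}), and $\sigma$-closedness yields a non-zero $q$ below every $\pi(P_i)$ and orthogonal to $\pi(R)$. The trade-offs: the paper's version needs nothing beyond ``infinite rank implies non-compact'' and keeps all spectral-family reasoning on $H$, where real rank zero is hypothesized; your version requires the essential-spectrum characterization, but it makes the role of the compact-kernel hypothesis conceptually transparent (it is exactly what allows the near-$1$ spectrum supported on $\mathcal{R}(R)^\perp$ to survive the quotient), it works intrinsically in $B$, and your chain argument establishing $q\leq\pi(P_i)$ makes explicit a verification the paper leaves implicit when it asserts $p\geq\pi(R)+q$. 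Both arguments share the same skeleton \textendash\, spectral interpolation plus $\sigma$-closedness producing a forbidden extra lower bound \textendash\, so the difference is one of mechanism rather than strategy, but the mechanisms are genuinely distinct.
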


\begin{proof}
If $s=\sup(\sigma(T^*T)\backslash\{1\})<1$ then we may let $f$ be a continuous function $f:\mathbb{R}\rightarrow[0,1]$ such that $f(t)=0$, for all $t\leq s$, and $f(t)=1$, for all $t\geq1$.  Then $P=E^\perp_{T^*T}(1-)=f(T^*T)\in A$ and $\pi(P)=\pi(f(T^*T))=f(\pi(T^*T))=E_{\pi(T^*T)}(1-)=p$ (using the fact that $\sigma(\pi(T^*T))\subseteq\sigma(T^*T)$).

Assume further that $A$ has real rank zero, $\mathcal{P}(B)$ is $\sigma$-closed and $\pi^{-1}[\{0\}]\subseteq\mathcal{K}(H)$.  Take $t_m\uparrow 1$ and $(Q_m)\subseteq\mathcal{P}(A)$ with $E^\perp_{T^*T}(t_{m+1})\leq Q_m\leq E^\perp_{T^*T}(t_m)$, for all $m\in\omega$, so we have $\bigwedge_{m\in\omega}Q_m=E^\perp_{T^*T}(1-)=P$.  If $\sup(\sigma(T^*T)\backslash\{1\})=1$ then $Q_m-P$ must have infinite rank, for all $m\in\omega$, and hence the same must be true for $Q_m-R$, for any $R\in\mathcal{P}(A)$ with $R\leq P$.  As $\pi^{-1}[\{0\}]\subseteq\mathcal{K}(H)$, we have $\pi(Q_m)>\pi(R)$, for all $m\in\omega$.  Thus $(\pi(Q_m)-\pi(R))$ must be bounded below by some $q\in\mathcal{P}(B)\backslash\{0\}$, yielding $p\geq\pi(R)+q>\pi(R)$ and $\pi(R)+q\in\mathcal{P}(B)$, i.e. $\pi(R)$ could not be the g.l.b. of $\pi(P_0),\ldots,\pi(P_n)$ in $\mathcal{P}(B)$.
\end{proof}

When $A=\mathcal{B}(H)$, $B\approx\mathcal{C}(H)$ and $\pi$ is canonical, \thref{finglbpi} tells us that, for $P,Q\in\mathcal{P}(A)$,
\begin{eqnarray}\label{piPQ}
&& \exists R\in\mathcal{P}(A)\textrm{ below both $P$ and $Q$ such that $\pi(R)$ is a g.l.b. of $\pi(P)$ and $\pi(Q)$ in $\mathcal{P}(B)$}\nonumber\\
&\Leftrightarrow& \pi(P\wedge Q)=\pi(P)\wedge\pi(Q)\nonumber\\
&\Leftrightarrow& \sigma(PQ)\backslash\{1\}<1.\nonumber
\end{eqnarray}
Note that when $\pi(P)\wedge\pi(Q)\notin\mathcal{P}(B)$, we certainly can not have $\pi(P\wedge Q)=\pi(P)\wedge\pi(Q)$.  However, even when $\pi(P)\wedge\pi(Q)\in\mathcal{P}(B)$, there is still no guarantee that $\pi(P\wedge Q)=\pi(P)\wedge\pi(Q)$.  For example, say $(e_n)$ is an orthonormal basis for $H$ and consider $P,Q\in\mathcal{P}(\mathcal{B}(H))$ with
\begin{equation}\label{badPQ}
\mathcal{R}(P)=\overline{\mathrm{span}}(e_{2n})\qquad\textrm{and}\qquad\mathcal{R}(Q)=\overline{\mathrm{span}}(e_{2n}+\frac{1}{n+1}e_{2n+1}).
\end{equation}
Then $\pi(P)=\pi(Q)$ is certainly in $B$, even though it is not equal to $0=\pi(0)=\pi(P\wedge Q)$.

When $A=B\approx\mathcal{C}(H)$ and $\pi$ is the identity, \thref{finglbpi} tells us that, for $p,q\in\mathcal{P}(A)$,
\begin{equation}\label{pq}
p\textrm{ and }q\textrm{ have a g.l.b. in }\mathcal{P}(A)\quad\Leftrightarrow\quad p\wedge q\in A\quad\Leftrightarrow\quad\sigma(pq)\backslash\{1\}<1.
\end{equation}
Incidentally, until recently it seems to have been implicitly assumed in a number of previous papers that the projections in the Calkin algebra are a lattice.  The first example of a pair of projections in the Calkin algebra that do not have a g.l.b. was provided by Weaver and can be found in \cite{b} Proposition 5.26.  For another (very similar) example, consider the pair $p=\pi(P^\omega)$ and $q=\pi(Q^\omega)$, where $P^\omega$ and $Q^\omega$ are the projections on $\bigoplus_\omega H\cong H$ composed of countably many copies of the $P$ and $Q$ in (\ref{badPQ}) (and $\pi:\mathcal{B}(H)\rightarrow\mathcal{C}(H)$ is canonical).  Then $\sigma(pq)\backslash\{1\}=\{\frac{n^2}{n^2+1}:n\in\omega\}$, and hence $p$ and $q$ have no g.l.b. in $\mathcal{C}(H)$ by (\ref{pq}).

Next we characterize g.l.b.s of finite subsets of projections using a simple norm expression.

\begin{thm}\thlabel{glbf}
Let $A$ be a C$^*$-algebra, take $P_0,\ldots,P_n\in\mathcal{P}(A)$ and set $T=P_0P_1\ldots P_n$ and $P=P_0\wedge\ldots\wedge P_n$.  If $R\leq P$ and $||T-R||<1$ then $R=P$.
Moreover, assuming $A$ has real rank zero and $\mathcal{P}(A)$ is $\sigma$-closed, if $R\in\mathcal{P}(A)$ is the g.l.b. of $P_0,\ldots,P_n$ in $\mathcal{P}(A)$ then $||T-R||<1$.
\end{thm}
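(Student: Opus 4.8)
The first implication is the easy direction and needs no hypotheses on $A$. The plan is to argue by contraposition: if $R\leq P$ but $R\neq P$, then $P-R$ is a non-zero projection, so I can pick a unit vector $v\in\mathcal{R}(P-R)$. Since $v\in\mathcal{R}(P)\subseteq\mathcal{R}(P_i)$ we have $P_iv=v$ for every $i$ and hence $Tv=v$, while $v\perp\mathcal{R}(R)$ gives $Rv=0$. Thus $\|(T-R)v\|=\|v\|=1$ and so $\|T-R\|\geq1$, which is the contrapositive of what is wanted.

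For the moreover part I would first reduce the norm to a spectral quantity. Since $R$ is a lower bound, $R\leq P_i$ for all $i$, giving $P_iR=R=RP_i$ and hence $TR=R=RT$ and $T^*R=R=R^*T$; a direct expansion then yields $(T-R)^*(T-R)=T^*T-R$, so that, writing $S=T^*T$, we get $\|T-R\|^2=\|S-R\|$. From $R\leq P_i$ we also have $R\leq\bigwedge_iP_i=P=E^\perp_S(1-)$ (the routine point $E^\perp_{T^*T}(1-)=\bigwedge_iP_i$ holds because $\langle T^*Tv,v\rangle=\|Tv\|^2=\|v\|^2$ exactly when $v\in\bigcap_i\mathcal{R}(P_i)$), so $\mathcal{R}(R)$ lies in the eigenspace of $S$ at $1$, giving $SR=R$ and therefore $S-R=R^\perp SR^\perp=:S'\in\mathcal{S}(A)$, which is positive with $E^\perp_{S'}(1-)=P-R$. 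In particular $\|T-R\|^2=\|S'\|\leq\|S\|\leq1$. Assuming for contradiction that $\|T-R\|=1$ then says exactly that $\|S'\|=\max\sigma(S')=1$.

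The heart of the argument is to manufacture a lower bound of $\{P_0,\dots,P_n\}$ strictly above $R$, contradicting that $R$ is the greatest lower bound. Choosing $s_m\uparrow1$, the condition $\|S'\|=1$ forces $E^\perp_{S'}(s_m)\neq0$ for every $m$ (otherwise $\sigma(S')\subseteq[0,s_m]$ and $\|S'\|\leq s_m<1$), and real rank zero via \thref{IP} supplies $Q_m\in\mathcal{P}(A)$ with $E^\perp_{S'}(s_{m+1})\leq Q_m\leq E^\perp_{S'}(s_m)$. These $Q_m$ form a decreasing sequence of non-zero projections in $\mathcal{P}(A)$ squeezed down to $\bigwedge_mQ_m=E^\perp_{S'}(1-)=P-R$. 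Now $\sigma$-closedness provides a non-zero $L\in\mathcal{P}(A)$ with $L\leq Q_m$ for all $m$, whence $L\leq P-R$. Since $P-R\leq P\leq P_i$, the projection $L$ is a lower bound of $\{P_0,\dots,P_n\}$, yet $L\leq R^\perp$ together with $L\neq0$ forces $L\nleq R$; this is the desired contradiction, so $\|T-R\|<1$.

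The main obstacle to watch for is that one cannot work with the spectral projections of $S=T^*T$ directly: for $t<1$ the projections $E^\perp_S(t)$ are only \emph{approximately} below the $P_i$ (a vector with $\langle Sv,v\rangle$ near $1$ is merely close to, not inside, each $\mathcal{R}(P_i)$), so they fail to be lower bounds. The device that circumvents this is the passage to $S'=R^\perp SR^\perp$, whose top eigenspace (at the value $1$) is exactly $P-R$: this sits genuinely below every $P_i$ and is orthogonal to $R$, so the projection $L$ extracted by $\sigma$-closedness is automatically a lower bound strictly exceeding $R$. Note that $\sigma$-closedness is doing real work here even when $P=R$, since it then rules out $\|S'\|=1$ altogether (the squeezed $Q_m$ would be forced to have zero meet yet a non-zero lower bound); this is precisely why the hypothesis excludes the pathological examples, such as the non-$\sigma$-closed $\mathcal{B}(H)$, where $\bigwedge_iP_i$ can lie in the algebra while $\sigma(T^*T)$ accumulates at $1$.
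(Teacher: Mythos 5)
Your proof is correct, and while the first half coincides with the paper's (you verify on a unit vector of $\mathcal{R}(P-R)$ what the paper expresses through the operator identity $PR^\perp=P(T-R)$), your second half is organized genuinely differently. The paper proves the ``moreover'' part in two lines by citing \thref{finglbpi} with $\pi$ the identity: that theorem yields $\sup(\sigma(T^*T)\backslash\{1\})<1$ and $P\in A$, whence $R=P=E^\perp_{T^*T}(1-)$ and $\|T-R\|^2=\|T^*TE_{T^*T}(1-)\|<1$. You instead inline a direct contradiction argument, and your key device --- compressing to $S'=R^\perp T^*TR^\perp=T^*T-R$, whose top spectral projection is exactly $P-R$ --- is not in the paper: it makes the decreasing sequence $(Q_m)$ supplied by \thref{IP} automatically orthogonal to $R$, so the non-zero lower bound $L$ produced by $\sigma$-closedness is a lower bound of the $P_i$ with $L\nleq R$, contradicting that $R$ is a g.l.b. (The paper's proof of \thref{finglbpi} runs the analogous squeeze on $T^*T$ itself and applies $\sigma$-closedness to the differences $Q_m-R$, which is why it needs the rank/kernel bookkeeping that your compression trick sidesteps.) The trade-off: your route never needs to show $P\in A$ or invoke the functional calculus, and it is self-contained; the paper's route is shorter given \thref{finglbpi}, and yields the quantitative by-products $R=P$ and $\|T-R\|^2\leq\sup(\sigma(T^*T)\backslash\{1\})$, which the surrounding discussion (e.g.\ the equivalences following the theorem) actually exploits. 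Your closing observation that $\sigma$-closedness is essential --- $\mathcal{B}(H)$ with the pair in (\ref{badPQ}) giving $P\wedge Q=0$ yet $\|PQ\|=1$ --- matches the paper's own counterexample discussion.
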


\begin{proof}
As $P\leq P_k$ and hence by definition $PP_k=P$, for $k\leq n$, we have $PT=P$.  If $R\leq P$ then also $R=R^*=TR$ and hence $TR^\perp=T-TR=T-R$ so $PR^\perp=PTR^\perp=P(T-R)$.  If we actually have $R<P$ then $PR^\perp$ is a non-zero projection so $1=||P(T-R)||\leq||T-R||$, which proves the first part.

Now assume further that $A$ has real rank zero and $\mathcal{P}(A)$ is $\sigma$-closed.  If $R\in\mathcal{P}(A)$ is below $P_0,\ldots,P_n$ then $(T-R)^*(T-R)=T^*TR^\perp$.  If $R$ is actually a g.l.b. of $P_0,\ldots,P_n$ in $\mathcal{P}(A)$ then, by \thref{finglbpi} (with $\pi$ the identity), $\sup(\sigma(T^*T)\backslash\{1\})<1$ and $P\in A$.  Thus we must actually have $R=P=E^\perp_{T^*T}(1-)$ and $||T-R||^2=||T^*TE_{T^*T}(1-)||<1$ and hence $||T-R||<1$.
\end{proof}

When $A\approx\mathcal{C}(H)$, \thref{glbf} tells us that, for $P,Q,R\in\mathcal{P}(A)$,
\begin{equation}\label{PQ-R}
\textrm{$R$ is the g.l.b. of $P$ and $Q$ in }\mathcal{P}(A)\quad\Leftrightarrow\quad R=P\wedge Q\quad\Leftrightarrow\quad R\leq P,Q\textrm{ and }||PQ-R||<1.
\end{equation}
The most important case of \thref{glbf} occurs when $R=0$, in which case (\ref{PQ-R}) becomes
\begin{equation}\label{PQ}
\textrm{$P$ and $Q$ are bounded below in }\mathcal{P}(A)\backslash\{0\}\quad\Leftrightarrow\quad P\wedge Q\neq 0\quad\Leftrightarrow\quad||PQ||=1.
\end{equation}
Note that is crucial, for $\Leftarrow$ part of both equivalences in (\ref{PQ}), that $\mathcal{P}(A)$ be $\sigma$-closed.  For consider the $P$ and $Q$ in (\ref{badPQ}).  For a counterexample to the second $\Leftarrow$, simply note that $P\wedge Q=0$ even though $||PQ||=1$ (and $\mathcal{B}(H)$ has real rank zero).  For a counterexample to the first $\Leftarrow$, let $\pi_u$ be the universal representation of $\mathcal{B}(H)$ on $H_u$ and let $\pi:\mathcal{B}(H)\rightarrow\mathcal{C}(H)$ be canonical.  Then $\pi\circ\pi^{-1}_u$ can be extended to a normal representation $\theta$ on the entirety of $\pi_u[\mathcal{B}(H)]''$ and hence $\theta(\pi_u(P)\wedge\pi_u(Q))=\theta(\pi_u(P))\wedge\theta(\pi_u(Q))=\pi(P)\wedge\pi(Q)=\pi(P)\neq0_u$.  Thus $\pi_u(P)\wedge\pi_u(Q)\neq0_u$ even though $0_u$ is a g.l.b. of $\pi_u(P)$ and $\pi_u(Q)$ in $\mathcal{P}(\pi_u[\mathcal{B}(H)])$ (because $P\wedge Q=0$).

As an immediate application of \thref{glbf} for $R=0$, we note that it implies the collection of projections on which a state (i.e. a positive functional on $A$ of norm $1$) is $1$ is centred (in the usual order theoretic sense, i.e. that every finite subset has a non-zero lower bound)\footnote{In fact, a lot more can be said about centred sets and states.  For example, for every centred $\mathcal{P}\subseteq\mathcal{P}(A)$ there exists a state which is $1$ on all of $\mathcal{P}$, and this even yields a bijective correspondence between pure states and maximal centred subsets of $\mathcal{P}(A)$ \textendash\, see \cite{q} for these and other related results.}.

\begin{cor}\thlabel{centred}
Let $\phi$ be a state on C$^*$-algebra $A$ of real rank zero and assume further that $\mathcal{P}(A)$ is $\sigma$-closed.  Then $\mathcal{P}(\phi)=\{P\in\mathcal{P}(A):\phi(P)=1\}$ is centred.
\end{cor}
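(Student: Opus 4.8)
The plan is to deduce centredness directly from the $R=0$ instance of \thref{glbf}. Fixing a finite subset $\{P_0,\ldots,P_n\}\subseteq\mathcal{P}(\phi)$ and writing $T=P_0P_1\cdots P_n$, I must produce a nonzero lower bound in $\mathcal{P}(A)$. The point is that if no such lower bound existed, then $0$ would be the g.l.b. of $P_0,\ldots,P_n$ in $\mathcal{P}(A)$, and the \emph{moreover} clause of \thref{glbf} (which is exactly where the real rank zero and $\sigma$-closed hypotheses enter) would force $\|T\|=\|T-0\|<1$. So the whole corollary reduces to the single norm estimate $\|T\|=1$.

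To get this, I would first record the standard fact that a state absorbs any projection on which it takes the value $1$. Passing to the unitization if necessary so that $\phi(1)=1$, suppose $\phi(P)=1$, equivalently $\phi(P^\perp)=0$. Positivity of $\phi$ makes $(a,b)\mapsto\phi(b^*a)$ a positive semidefinite form, so Cauchy--Schwarz gives, for every $S\in A$, \[|\phi(P^\perp S)|^2\leq\phi(P^\perp)\phi(S^*S)=0,\] whence $\phi(P^\perp S)=0$ and therefore $\phi(PS)=\phi(S)$; the symmetric computation gives $\phi(SP)=\phi(S)$. Applying this absorption to each $P_k$ and peeling the factors of $T$ off one at a time yields \[\phi(P_0P_1\cdots P_n)=\phi(P_0P_1\cdots P_{n-1})=\cdots=\phi(P_0)=1.\] Since $\|\phi\|=1$ and each $P_k$ is a projection, $1=|\phi(T)|\leq\|T\|\leq\|P_0\|\cdots\|P_n\|=1$, so $\|T\|=1$, which is precisely the hypothesis feeding into \thref{glbf}. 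Hence $P_0,\ldots,P_n$ admit a nonzero lower bound, and $\mathcal{P}(\phi)$ is centred.

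I do not expect a genuine obstacle here: the only step requiring any thought is the absorption identity $\phi(P)=1\Rightarrow\phi(PS)=\phi(S)=\phi(SP)$, and this is immediate from positivity of $\phi$ via Cauchy--Schwarz. The single bookkeeping caveat is the passage to the unitization when $A$ is non-unital, needed only so that $\phi(1)=1$ is available; this is harmless since it does not change the value of $\phi$ on the element $T\in A$, and everything after the absorption step is a routine norm inequality combined with the already-established \thref{glbf}.
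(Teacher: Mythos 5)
Your proof is correct, and at the top level it runs exactly as the paper's does: reduce centredness of $\mathcal{P}(\phi)$ to the single norm estimate $||T||=1$ for $T=P_0P_1\ldots P_n$, and then feed that into the $R=0$ case of \thref{glbf}, which is where real rank zero and $\sigma$-closedness are used. The only divergence is in how the estimate $||T||\geq1$ is obtained. The paper invokes the Gelfand--Naimark--Segal construction: writing $\phi(S)=\langle\pi(S)v,v\rangle$ for a unit vector $v$, the hypothesis $\phi(P_k)=1$ forces $\pi(P_k)v=v$ for each $k$, whence $\pi(T)v=v$ and $||T||\geq\phi(T)=1$. You prove the same identity $\phi(T)=1$ without any representation, via the Cauchy--Schwarz absorption $\phi(P)=1\Rightarrow\phi(PS)=\phi(S)=\phi(SP)$ and peeling off the factors of $T$; this is really the GNS argument unpacked, since equality in Cauchy--Schwarz is precisely what makes $v$ a fixed vector of each $\pi(P_k)$. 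What your route buys is elementarity (no GNS machinery, only positivity of $\phi$); what it costs is the unitization caveat you correctly flag, needed so that $P^\perp$ lives in the algebra on which the sesquilinear form $(a,b)\mapsto\phi(b^*a)$ is defined, whereas the paper's GNS argument applies verbatim to states on non-unital algebras and needs no such adjustment. Both arguments are complete.
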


\begin{proof} By the Gelfand-Naimark-Segal construction, there exists a representation $\pi$ on a Hilbert space $H$ and $v\in H$ such that $\phi(S)=\langle\pi(S)v,v\rangle$, for all $S\in A$ (see \cite{m} Theorem 1.6.3).  Thus if $P\in\mathcal{P}(A)$ satisfies $1=\phi(P)=\langle\pi(P)v,v\rangle$ then $\pi(P)v=v$.  So, for any $P_0,\ldots,P_n\in\mathcal{P}(\phi)$, setting $T=P_0\ldots P_n$ we have $\pi(T)v=v$ and hence $||T||\geq\phi(T)=\langle\pi(T)v,v\rangle=\langle v,v\rangle=1$. \end{proof}

Lastly, we show that, even though $\mathcal{P}(A)$ may not be a lattice, it is still relatively well-behaved.  Specifically, it is separative, i.e. whenever $p\nleq q$ there exists non-zero $r\leq q$ with $r\wedge q=0$.

\begin{thm}\thlabel{sep}
If $A$ is a C$^*$-algebra of real rank zero then $\mathcal{P}(A)$ is separative.
\end{thm}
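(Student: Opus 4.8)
The plan is to reproduce, in the real-rank-zero setting, the Hilbert-space argument sketched in the introduction: given $P,Q\in\mathcal{P}(A)$ with $P\nleq Q$, the projection onto $\mathcal{R}(P)\cap(\mathcal{R}(P)\cap\mathcal{R}(Q))^\perp$ is a non-zero subprojection of $P$ whose range meets $\mathcal{R}(Q)$ trivially, and it therefore witnesses separativity. The difficulty is that this ideal separating projection, which equals $P-P\wedge Q$, need not lie in $A$; so I will instead manufacture a genuine element $R\in\mathcal{P}(A)$ with the same three features --- $R\neq0$, $R\leq P$, and $\mathcal{R}(R)\cap\mathcal{R}(Q)=\{0\}$ --- using the spectral-projection approximations guaranteed by \thref{IP}. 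Note first that these three features suffice: any lower bound in $\mathcal{P}(A)$ of $\{R,Q\}$ has range inside $\mathcal{R}(R)\cap\mathcal{R}(Q)=\{0\}$, so $0$ is the g.l.b. of $R$ and $Q$ in $\mathcal{P}(A)$, i.e. $R\wedge Q=0$.

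The object to examine is the positive element $S=PQ^\perp P=P-PQP\in\mathcal{S}(A)$, which detects exactly the failure of $P\leq Q$. Since $P\nleq Q$ is equivalent to $PQ^\perp\neq0$, formula (\ref{||PQ||}) applied to the pair $P,Q^\perp$ gives $\max\sigma(S)=||PQ^\perp P||=||PQ^\perp||^2>0$; write $m$ for this maximum. Fixing any $0<t<s<m$, \thref{IP} produces $R\in\mathcal{P}(A)$ with $E^\perp_S(s)\leq R\leq E^\perp_S(t)$, and this $R$ will be the required projection.

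It remains to check the three features. For $R\neq0$: as $m=\max\sigma(S)>s$, (\ref{specE}) locates spectrum of $S$ in the interval $(s,\infty)$, so $E^\perp_S(s)\neq0$ and hence $R\geq E^\perp_S(s)\neq0$. For $R\leq P$: since $S=PSP$ is positive and supported on $\mathcal{R}(P)$ (equivalently $S=PS=SP$), all spectral projections of $S$ commute with $P$ and vanish on $\mathcal{R}(P^\perp)$, so $E^\perp_S(t)\leq P$ for every $t>0$ --- a computation in the same spirit as the one establishing (\ref{PQP}) --- and therefore $R\leq E^\perp_S(t)\leq P$. For $\mathcal{R}(R)\cap\mathcal{R}(Q)=\{0\}$: a unit vector $v$ in this intersection lies in $\mathcal{R}(E^\perp_S(t))$, forcing $\langle Sv,v\rangle>t>0$ by \thref{sf}, yet $Pv=v$ and $Q^\perp v=0$ give $\langle Sv,v\rangle=\langle Q^\perp v,v\rangle=0$, a contradiction.

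I expect the only subtle points to be the two pieces of spectral bookkeeping --- the non-vanishing of $E^\perp_S(s)$ and the inclusion $E^\perp_S(t)\leq P$ --- both of which reduce to locating $\sigma(S)$ correctly through (\ref{||PQ||}) and (\ref{specE}) and to the defining inequalities of \thref{sf}; the strict positivity of the threshold $t$ is precisely what excludes $\mathcal{R}(Q)$ from $\mathcal{R}(R)$. Everything else is the routine passage between the range condition and the order relation $R\wedge Q=0$. It is worth stressing that, in contrast to the lower-bound criteria (\ref{PQ}), this argument builds $R$ directly and so needs neither that $\mathcal{P}(A)$ be $\sigma$-closed nor any hypothesis on a quotient, using only real rank zero through \thref{IP}.
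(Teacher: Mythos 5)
Your proof is correct and is essentially the paper's own argument: both take $S=PQ^\perp P$, note $\max\sigma(S)=||PQ^\perp||^2>0$, and use \thref{IP} to sandwich a projection $R\in\mathcal{P}(A)$ between spectral projections $E^\perp_S$ at two thresholds strictly between $0$ and $\max\sigma(S)$, forcing $0\neq R\leq P$. The only (cosmetic) difference is the final step: the paper derives $||QR||\leq\sqrt{1-s}<1$ and cites \thref{glbf} to conclude $R\wedge Q=0$, whereas you deduce $\mathcal{R}(R)\cap\mathcal{R}(Q)=\{0\}$ directly from the same spectral-family inequality of \thref{sf}.
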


\begin{proof} Assume $P,Q\in\mathcal{P}(A)$ satisfy $P\nleq Q$, so $||Q^\perp P||>0$.  Take $s,t\in\mathbb{R}$ and $R\in\mathcal{P}(A)$ with $0<s<t<||Q^\perp P||^2$ and $E^\perp_{PQ^\perp P}(t)\leq R\leq E^\perp_{PQ^\perp P}(s)$.  Note that $P^\perp\leq E_{PQ^\perp P}(0)$ and hence $P\geq E^\perp_{PQ^\perp P}(0)\geq E^\perp_{PQ^\perp P}(s)\geq R$.  Thus $||QR||<1$ because, for $v\in\mathcal{R}(R)\subseteq\mathcal{R}(P)$,
\[||Qv||=\sqrt{1-||Q^\perp v||^2}=\sqrt{1-||Q^\perp Pv||^2}=\sqrt{1-\langle PQ^\perp Pv,v\rangle}\leq\sqrt{1-s}.\]
By \thref{glbf} (see (\ref{PQ})), $Q\wedge R=0$.  But $t<||Q^\perp P||^2=||PQ^\perp P||$ so $0<E^\perp_{PQ^\perp P}(t)\leq R$.
\end{proof}

\section{Lower Bounds (Countable Collections)}

Say we are given an aribtrary countable subset $(p_n)$ of a lattice $\mathbb{P}$.  Setting $q_n=p_0\wedge\ldots\wedge p_n$, for all $n\in\omega$, we see that $(q_n)$ is a (possibly non-strictly) decreasing sequence with exactly the same lower bounds.  Even though $\mathcal{P}(A)$ may not be a lattice, for C$^*$-algebras $A$ of real rank zero, it will still have this property.  Indeed, we already saw in the proof of \thref{finglbpi} that, for every finite subset of $\mathcal{P}(A)$, there exists a decreasing sequence with exactly the same lower bounds.  We now show that, if this holds even for just two element subsets $\mathcal{P}$ of $\mathcal{P}(A)$ then, so long as $A$ has the $\omega$-property, it must also hold for arbitrary countable $\mathcal{P}\subseteq\mathcal{P}(A)$.  Note that this theorem applies even to C$^*$-algebras that do not have real rank zero, so long as there is some other method available to prove the two element case.

\begin{thm}\thlabel{cglb}
Let $A$ be an arbitrary C$^*$-algebra with the $\omega$-property.  If, for every two-element set $\mathcal{P}\subseteq\mathcal{P}(A)$, there exists decreasing $(Q_n)\subseteq\mathcal{P}(A)$ with $\bigwedge\mathcal{P}=\bigwedge Q_n$ then the same is true for countable $\mathcal{P}$.  Moreover, if $\mathcal{P}\subseteq\mathcal{P}(A)$ has a g.l.b. $Q\in\mathcal{P}(A)$ then $Q=\bigwedge\mathcal{F}$ for some finite $\mathcal{F}\subseteq\mathcal{P}$.
\end{thm}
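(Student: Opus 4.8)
The plan is to fix an enumeration $\mathcal{P}=\{P_m:m\in\omega\}$, to interleave it into a single sequence $(D_n)$ in which every $P_m$ occurs infinitely often, to choose $\epsilon_n\downarrow0$, and then to build a (possibly non-strictly) decreasing sequence $(Q_n)\subseteq\mathcal{P}(A)$ satisfying, for every $n$, (a) $Q_n\geq\bigwedge\{D_0,\ldots,D_n\}$ and (b) $\|D_n^\perp Q_n\|<\epsilon_n$. These two conditions already force $\bigwedge Q_n=\bigwedge\mathcal{P}$: by (a) we have $Q_n\geq\bigwedge\mathcal{P}$ for all $n$, so $\bigwedge Q_n\geq\bigwedge\mathcal{P}$; conversely, if $v\in\mathcal{R}(\bigwedge Q_n)=\bigcap_n\mathcal{R}(Q_n)$ and $m$ is fixed, then choosing $n$ with $D_n=P_m$ gives $\|P_m^\perp v\|=\|P_m^\perp Q_nv\|\leq\|P_m^\perp Q_n\|<\epsilon_n$, and since $P_m$ is revisited with $n\to\infty$ and $\epsilon_n\to0$ we get $v\in\mathcal{R}(P_m)$; hence $\bigwedge Q_n\leq P_m$ for every $m$. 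This also subsumes the finite case.

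To build $(Q_n)$ I would proceed recursively, setting $Q_{-1}=1$ (passing to the unitization if needed). Given $Q_{n-1}$ with $Q_{n-1}\geq\bigwedge\{D_0,\ldots,D_{n-1}\}$, apply the two-element hypothesis to $\{Q_{n-1},D_n\}$ to obtain a decreasing $(R_k)\subseteq\mathcal{P}(A)$ with $\bigwedge R_k=Q_{n-1}\wedge D_n$. Since $Q_{n-1}\wedge D_n\leq Q_{n-1}$ and $\leq D_n$, the sequence $(R_k)$ is below both $Q_{n-1}$ and $D_n$, so the $\omega$-property (\thref{omegap}) yields $\|Q_{n-1}^\perp R_k\|\to0$ and $\|D_n^\perp R_k\|\to0$. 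Moreover $R_k\geq Q_{n-1}\wedge D_n\geq\bigwedge\{D_0,\ldots,D_n\}$, so each term automatically satisfies (a). Choosing $k$ large makes both $\|D_n^\perp R_k\|$ and $\|Q_{n-1}^\perp R_k\|$ as small as desired; the reason for listing each $P_m$ infinitely often is that (b) then needs to be secured only for the current index $D_n$, the earlier indices being revisited later.

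The main obstacle is that the hypothesis furnishes terms $R_k$ that are below $Q_{n-1}$ only \emph{in the limit}, with $\|Q_{n-1}^\perp R_k\|$ small but in general nonzero, whereas keeping $(Q_n)$ genuinely decreasing requires a term honestly $\leq Q_{n-1}$. In other words, the crux is to realise the relative meet $Q_{n-1}\wedge D_n$ by a decreasing sequence lying \emph{inside the corner} below $Q_{n-1}$, rather than merely approaching it from almost inside. This step would be immediate under real rank zero, via the sandwiched spectral projections of \thref{IP}, but here it must be extracted from the two-element hypothesis and the $\omega$-property alone. I expect the resolution to exploit that $\|Q_{n-1}^\perp R_k\|\to0$ forces $R_k$ eventually to sit at a strictly positive angle to $\mathcal{R}(Q_{n-1}^\perp)$, and to convert this into an honest descent below $Q_{n-1}$ while retaining (a) and (b); this is the delicate heart of the construction, after which the verification of the first paragraph completes the proof of the first assertion.

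For the final (``moreover'') assertion I would use the sequence just produced. If $\mathcal{P}$ has a g.l.b.\ $Q\in\mathcal{P}(A)$, then, as $(Q_n)$ and $\mathcal{P}$ share the same lower bounds, $Q$ is also the g.l.b.\ of $(Q_n)$; in particular the decreasing sequence $(Q_n)$ is below $Q$, so the $\omega$-property gives $\|Q^\perp Q_n\|\to0$. Because $Q\leq Q_N$ we have $Q^\perp Q_N=Q_N-Q$, a projection, so $\|Q^\perp Q_N\|\in\{0,1\}$; hence $\|Q^\perp Q_N\|<1$ for large $N$ forces $Q=Q_N$. Finally, $Q=Q_N\geq\bigwedge\{D_0,\ldots,D_N\}$ by (a), while $Q\leq D_i$ for every $i$ gives $Q\leq\bigwedge\{D_0,\ldots,D_N\}$; therefore $Q=\bigwedge\mathcal{F}$ with $\mathcal{F}=\{D_0,\ldots,D_N\}$ a finite subset of $\mathcal{P}$, as required.
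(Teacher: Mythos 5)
Your overall architecture is sound: the interleaved enumeration, the two approximate conditions (a) and (b), your verification that they force $\bigwedge Q_n=\bigwedge\mathcal{P}$, and your eventual-constancy argument for the ``moreover'' clause (via $\|Q^\perp Q_N\|\in\{0,1\}$, a legitimate variant of the paper's appeal to $\sigma$-closedness) are all correct. But the proof has a genuine gap, and you have named it yourself: the step that converts the sequence $(R_k)$, which is only \emph{approximately} below $Q_{n-1}$, into an honest projection $Q_n\in\mathcal{P}(A)$ with $Q_n\leq Q_{n-1}$, $Q_n\geq\bigwedge\{D_0,\ldots,D_n\}$ and $\|D_n^\perp Q_n\|$ small is precisely the non-trivial content of the theorem, and ``I expect the resolution to exploit\ldots'' is a conjecture, not an argument. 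As it stands, all you can extract from the two-element hypothesis and the $\omega$-property is a sequence sitting at small angle to $\mathcal{R}(Q_{n-1})$, with no mechanism, inside an abstract C$^*$-algebra, for projecting it down into the corner below $Q_{n-1}$.

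The missing mechanism is a spectral-gap argument, and it needs neither real rank zero nor a second use of the two-element hypothesis, only the continuous functional calculus. Fix $\epsilon<1$ and choose $k$ with $\|Q_{n-1}^\perp R_k\|\leq\epsilon$ and $\|D_n^\perp R_k\|\leq\epsilon$ (the $\omega$-property gives both, exactly as you argue). Put $T=Q_{n-1}R_kQ_{n-1}\in A$. By (\ref{||PQ||}), (\ref{sigmaPQP}) and (\ref{sigmaPQperpP}), the bound $\|Q_{n-1}^\perp R_k\|\leq\epsilon$ forces $\sigma(T)\subseteq\{0\}\cup[1-\epsilon^2,1]$, so if $f$ is continuous, $0$ near $0$ and $1$ on $[1-\epsilon^2,1]$, then $Q_n:=f(T)=E_T^\perp(0)$ is a projection lying in $A$. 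Automatically $Q_n\leq Q_{n-1}$, since $\mathcal{R}(Q_n)\subseteq\overline{\mathcal{R}(T)}\subseteq\mathcal{R}(Q_{n-1})$; by (\ref{PQP}), $Q_n\geq E_T^\perp(1-)=Q_{n-1}\wedge R_k\geq\bigwedge\{D_0,\ldots,D_n\}$, which is your (a); and for unit $v\in\mathcal{R}(Q_n)$ one has $\|R_kv\|^2=\langle Tv,v\rangle\geq1-\epsilon^2$, whence $\|R_k^\perp Q_n\|\leq\epsilon$ and $\|D_n^\perp Q_n\|\leq\|D_n^\perp R_k\|+\|R_k^\perp Q_n\|\leq2\epsilon$, which is your (b). This is exactly the paper's construction (\ref{Qn}): there the descent below $Q_{n-1}$ is achieved by taking the spectral projection $E^\perp_{Q_{n-1}P_{n,m_n}Q_{n-1}}((1-2^{-2n})-)$ of the compressed element, which the spectral gap turns into a continuous function of an element of $A$. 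Apart from this, the differences between your plan and the paper's proof are cosmetic (the paper treats finite sets first by induction and then the countable case, whereas your interleaving handles both at once), so once this step is supplied your argument goes through.
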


\begin{proof} We first prove the theorem for finite $\mathcal{P}$, by induction on the number of elements in $\mathcal{P}$.  For the induction step, we have decreasing $(P_n)\subseteq\mathcal{P}(A)$ and $P\in\mathcal{P}(A)$ and we have to find decreasing $(Q_n)\subseteq\mathcal{P}(A)$ with $\bigwedge Q_n=P\wedge\bigwedge P_n$.  For each $n\in\omega$, let $(P_{n,m})_{m\in\omega}$ be a decreasing sequence with $P\wedge P_n=\bigwedge_{m\in\omega}P_{n,m}$.  Recursively define $(Q_n)\subseteq\mathcal{P}(A)$ such that $Q_n\geq P\wedge P_n$, for each $n\in\omega$, as follows.  Set $Q_0=P_{0,0}$ and, for each $n\in\omega$, let $m_n$ be such that $||P^\perp_{k,n}P_{n,m_n}||\leq2^{-n}$, for all $k<n$, and $||Q^\perp_{n-1}P_{n,m_n}||\leq2^{-n}$, which is possible because $A$ has the $\omega$-property.  Then let
\begin{equation}\label{Qn}
Q_n=E^\perp_{Q_{n-1}P_{n,m_n}Q_{n-1}}((1-2^{-2n})-),
\end{equation}
noting that $\sigma(Q_{n-1}P_{n,m_n}Q_{n-1})\cap(0,1-2^{-2n})$ because $||Q^\perp_{n-1}P_{n,m_n}||\leq2^{-n}$ (apply (\ref{||PQ||}), (\ref{sigmaPQP}) and (\ref{sigmaPQperpP})),\footnote{Incidentally, this means $Q_n=E^\perp_{Q_{n-1}P_{n,m_n}Q_{n-1}}(0)$ so $\mathcal{R}(Q_n)=\mathcal{R}(Q_{n-1}P_{n,m_n}Q_{n-1})=\mathcal{R}(Q_{n-1}P_{n,m_n})$.} and hence $Q_n=f(Q_{n-1}P_{n,m_n}Q_{n-1})\in A$, where $f$ is any continuous function which is $0$ at $0$ and $1$ on $[1-2^{-2n},1]$.  Note that the recursion may continue because \[Q_n\geq E^\perp_{Q_{n-1}P_{n,m_n}Q_{n-1}}(1-)=Q_{n-1}\wedge P_{n,m_n}\geq(P\wedge P_{n-1})\wedge(P\wedge P_n)=P\wedge P_n.\]

As $Q_n\geq P\wedge P_n$, for each $n\in\omega$, $\bigwedge Q_n\geq P\wedge\bigwedge P_n$.  On the other hand, for all $n\in\omega$ and $k<n$, we have $||P^\perp_{k,n}Q_n||\leq||P^\perp_{k,n}P_{n,m_n}||+||P_{n,m_n}^\perp Q_n||\leq2^{1-n}$ (by (\ref{Qn})) so $\bigwedge Q_n\leq P\wedge\bigwedge P_n$.

Thus the finite case is done, and to prove the countably infinite case let $(P_n)$ enumerate $\mathcal{P}$ and, for each $n\in\omega$, let $(P_{n,m})_{m\in\omega}$ be a decreasing sequence with $\bigwedge_mP_{n,m}=P_0\wedge\ldots\wedge P_n$.  Then define $(Q_n)$ exactly as before.  For the last statement, note that if $\mathcal{P}$ has a g.l.b. $P$ then, as $\mathcal{P}(A)$ is $\sigma$-closed, $(Q_n)$ must be constant from some point onwards, i.e. $Q_n=Q_m$ for all $n\geq m$, for some $m\in\omega$, and $Q_m$ is therefore the g.l.b. of $\mathcal{P}$.  In particular, $Q_m\leq P_0\wedge\ldots\wedge P_m$.  But the definition of the $(Q_n)$ shows that also $P_0\wedge\ldots\wedge P_m\leq Q_m$. \end{proof}

\thref{cglb} (minus the last sentence) can also be proved for arbitrary C$^*$-algebras of real rank zero, whether they have the $\omega$-property or not.  Unfortunately, the proof is not so simple in this case, as it relies on some slightly technical calculations involving spectral families.  Before we prove this in \thref{arbcount}, let us first prove two lemmas.  \thref{nicecount} essentially says that if a countable sequence of projections is close enough to being a decreasing sequence then there necessarily exists a decreasing sequence with the same lower bounds.

\begin{lem}\thlabel{nicecount}
Let $A$ be an arbitrary C$^*$-algebra and assume $(P_n)\subseteq\mathcal{P}(A)$ satisfies $||P^\perp_nP_{n+1}||<1$, for all $n\in\omega$, and
\begin{equation}\label{techcon}
\sum_{k<n}||P^\perp_kP_{k+1}\ldots P_n||\rightarrow0,\quad\textrm{as}\ n\rightarrow\infty.
\end{equation}
Then there exists (possibly non-strictly) decreasing $(Q_n)\subseteq\mathcal{P}(A)$ such that $\bigwedge P_n=\bigwedge Q_n$.
\end{lem}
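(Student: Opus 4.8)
The plan is to build the required decreasing sequence directly from the products $T_n=P_0P_1\cdots P_n$. Since $T_{n+1}=T_nP_{n+1}$ we have $\mathcal{R}(T_{n+1})\subseteq\mathcal{R}(T_n)$, so the range projections $R_n=E^\perp_{T_nT_n^*}(0)$ onto $\overline{\mathcal{R}(T_n)}$ automatically form a decreasing sequence in $\mathcal{P}(\mathcal{B}(H))$. It therefore remains to establish two things: that each $R_n$ actually lies in $A$ (at least for large $n$), and that $\bigwedge R_n=\bigwedge P_n$. Fixing an $N$ below which things may misbehave and setting $Q_n=R_{\max(n,N)}$ will then deliver a genuinely decreasing sequence in $\mathcal{P}(A)$ with the same infimum.

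The single estimate driving everything comes from telescoping. Writing $P_n-T_n=\sum_{k<n}P_k^\perp P_{k+1}\cdots P_n$ and abbreviating $s_n=\sum_{k<n}\|P_k^\perp P_{k+1}\cdots P_n\|$, hypothesis (\ref{techcon}) gives $\|P_n-T_n\|\leq s_n\to0$. Hence $s_n<1$ for all $n\geq N$, and since $T_n=T_nP_n$, for each unit $v\in\mathcal{R}(P_n)$ we get $\|T_nv\|\geq\|P_nv\|-\|(T_n-P_n)v\|\geq1-s_n>0$. Thus $T_n$ is bounded below on $\mathcal{R}(P_n)$, so $\mathcal{R}(T_n)=T_n\mathcal{R}(P_n)$ is closed and $0$ is isolated in $\sigma(T_nT_n^*)$ (the nonzero part being bounded below by $(1-s_n)^2$). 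Consequently $R_n=E^\perp_{T_nT_n^*}(0)=g(T_nT_n^*)$ for any continuous $g$ with $g(0)=0$ that is $1$ on the nonzero part of the spectrum, so $R_n\in A$ for every $n\geq N$.

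For the infimum, $\bigwedge P_n\leq\bigwedge R_n$ is immediate, since $v\in\bigcap_k\mathcal{R}(P_k)$ forces $T_nv=v$. For the reverse inclusion I would show $\|P_k^\perp T_n\|\to0$ as $n\to\infty$ for each fixed $k$. A second telescoping, now starting the product at index $k+1$, gives $\|P_m-P_{k+1}\cdots P_m\|\leq s_m$, whence $\|P_k^\perp P_m\|\leq2s_m$ and then $\|P_k^\perp T_n\|\leq\|P_k^\perp P_n\|+\|P_n-T_n\|\leq3s_n$. Given a unit $v\in\bigcap_{n\geq N}\mathcal{R}(R_n)$, for each $n\geq N$ I write $v=T_nx$ with $x\in\mathcal{R}(P_n)$ and $\|x\|\leq(1-s_n)^{-1}$ (possible by the lower bound above), so $\|P_k^\perp v\|\leq\|P_k^\perp T_n\|\,\|x\|\leq 3s_n/(1-s_n)$. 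Letting $n\to\infty$ forces $P_kv=v$; as $k$ is arbitrary this yields $v\in\bigcap_k\mathcal{R}(P_k)$, and hence $\bigwedge Q_n=\bigwedge P_n$.

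The one genuinely delicate point, and the reason this requires proof rather than being the lattice-theoretic triviality it is for honestly decreasing sequences, is getting the $R_n$ back into $A$. We are working with an \emph{arbitrary} C$^*$-algebra, with no real rank zero hypothesis and so no spectral-projection approximations to fall back on, and range projections of elements of $A$ need not lie in $A$. The crux is the observation that (\ref{techcon}), via $\|P_n-T_n\|\to0$, forces $\mathcal{R}(T_n)$ to be closed with a spectral gap at $0$, which is exactly what lands $R_n$ in $A$. I would note in passing that the hypothesis $\|P_n^\perp P_{n+1}\|<1$ plays no role in this argument; it is the summability condition (\ref{techcon}) that does all the work.
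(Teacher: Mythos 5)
Your proof is correct, and it shares the paper's skeleton: both arguments take the decreasing sequence to consist of the projections onto the ranges of the products $T_n=P_0\cdots P_n$ (these ranges are automatically nested since $T_{n+1}=T_nP_{n+1}$), and both establish $\bigwedge P_n=\bigwedge Q_n$ via telescoping norm estimates driven by (\ref{techcon}). Where you genuinely diverge is in how the range projections are shown to lie in $A$. The paper builds them one factor at a time: since $\|P_n^\perp P_{n+1}\|<1$, for any projection $P\leq P_{n+1}$ the spectrum of $P_nPP_n$ avoids $(0,1-\|P_n^\perp P_{n+1}\|^2)$, so the projection onto $\mathcal{R}(P_nP)$ is a continuous function of $P_nPP_n$ and hence lies in $A$; iterating from the innermost factor outward puts the projection onto $\mathcal{R}(P_0\cdots P_n)$ in $A$ for \emph{every} $n$. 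You instead use the single global bound $\|P_n-T_n\|\leq s_n$ to produce a spectral gap for $T_nT_n^*$ directly, which works only once $s_n<1$, and you repair the head of the sequence by freezing it at $R_N$. The payoff of your route is precisely your closing observation, which is correct: the hypothesis $\|P_n^\perp P_{n+1}\|<1$ is never used, so your argument proves the lemma without it (consistently with the fact that (\ref{techcon}) already forces $\|P_n^\perp P_{n+1}\|\leq s_{n+1}<1$ for large $n$, so the hypothesis only constrains small $n$, which your freezing trick renders irrelevant). What the paper's route buys in exchange is the finer conclusion that $\mathcal{R}(Q_n)=\mathcal{R}(P_0\cdots P_n)$ for all $n$ rather than just for $n\geq N$, though nothing downstream needs this; in the application in \thref{arbcount} the constructed sequence satisfies the adjacent-norm condition anyway. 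One point to write out if you formalize your version: the claim that the nonzero part of $\sigma(T_nT_n^*)$ is bounded below by $(1-s_n)^2$ follows because $T_n^*T_n=P_nT_n^*T_nP_n$ is supported on $\mathcal{R}(P_n)$, where $\langle T_n^*T_nv,v\rangle=\|T_nv\|^2\geq(1-s_n)^2\|v\|^2$, combined with $\sigma(T_nT_n^*)\setminus\{0\}=\sigma(T_n^*T_n)\setminus\{0\}$.
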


\begin{proof} For all $n\in\omega$, $||P^\perp_nP_{n+1}||<1$ so, for any $P\leq P_{n+1}$, we have \[\mathcal{R}(P_nP)=\mathcal{R}(E^\perp_{P_nPP_n}(0))=\mathcal{R}(E^\perp_{P_nPP_n}(t-)),\] where $t=1-||P^\perp_nP_{n+1}||^2$.  It follows, by the continuous functional calculus, that the projection onto $\mathcal{R}(P_nP)$ is in $A$ (see (\ref{Qn}) and the comments immediately after).  Thus we have $(Q_n)\subseteq\mathcal{P}(A)$ such that $\mathcal{R}(Q_n)=\mathcal{R}(P_0\ldots P_n)$, for all $n\in\omega$.

For $m,n\in\omega$, with $m\leq n$, and $v\in\mathcal{R}(P_n)$, $||v-Q_nv||\leq||v-P_0\ldots P_{n-1}v||$ and hence \begin{equation}\label{eq2}
||Q^\perp_mP_n||\leq||Q^\perp_nP_n||\leq||P_n-P_0\ldots P_n||\leq\sum_{k<n}||P^\perp_kP_{k+1}\ldots P_n||\rightarrow0\textrm{ as }n\rightarrow\infty.
\end{equation}
Thus $\bigwedge P_n\leq\bigwedge Q_n$.

On the other hand, again for $m,n\in\omega$, with $m\leq n$, and $v\in\mathcal{R}(P_n)$, \[||P^\perp_mP_0\ldots P_{n-1}v||\leq||P_0\ldots P_{n-1}v-P_m\ldots P_{n-1}v||\leq||\sum_{k<m}P^\perp_kP_{k+1}\ldots P_{n-1}v||,\] and hence, as long as $||P_n-P_0\ldots P_n||<1$, which is true for sufficiently large $n$ by (\ref{eq2}),
\begin{eqnarray*}
||P^\perp_mQ_n|| &=& \sup_{v\in\mathcal{R}(P_n)}||P^\perp_mP_0\ldots P_{n-1}v||/||P_0\ldots P_{n-1}v||\\
&\leq& \sup_{v\in\mathcal{R}(P_n)}||\sum_{k<m}P^\perp_kP_{k+1}\ldots P_{n-1}v||/(||v||-||v-P_0\ldots P_{n-1}v||)\\
&\leq& ||\sum_{k<m}P^\perp_kP_{k+1}\ldots P_n||/(1-||P_n-P_0\ldots P_n||)\\
&\leq& \sum_{k<n}||P^\perp_kP_{k+1}\ldots P_n||/(1-||P_n-P_0\ldots P_n||)\\
&\rightarrow& 0\quad\textrm{ as }n\rightarrow\infty\qquad\textrm{by (\ref{eq2})}.
\end{eqnarray*}
Thus $\bigwedge Q_n\leq\bigwedge P_n$. \end{proof}

This next lemma provides the spectral family inequality required in the proof of \thref{arbcount}.

\begin{lem}
For Hilbert space $H$, $P\in\mathcal{P}(\mathcal{B}(H))$, $S\in\mathcal{S}(\mathcal{B}(H))$ and $s,t<||S||$ with $s\geq0$,
\begin{equation}\label{EE}
||E_S(t)E^\perp_{PSP}(s)||^2\leq(||S||-s)/(||S||-t).
\end{equation}
\end{lem}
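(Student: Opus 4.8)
The plan is to reduce the operator norm to a supremum over vectors and bound it pointwise. Writing $E=E_S(t)$ and $F=E^\perp_{PSP}(s)$, and using the standard identity $||EF||=\sup\{||Ev||:v\in\mathcal{R}(F),\ ||v||=1\}$, it suffices to fix a unit vector $v\in\mathcal{R}(F)$ and prove $||Ev||^2\leq(||S||-s)/(||S||-t)$; the bound then passes to the supremum, giving (\ref{EE}).

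The crux — and the step I expect to be the main obstacle — is making the two different operators $S$ and $PSP$ compatible, which is exactly where the hypothesis $s\geq0$ enters. The key observation is that $s\geq0$ forces $\mathcal{R}(F)\subseteq\mathcal{R}(P)$. Indeed, mirroring the argument already used in the proof of \thref{Q<P}, we have $\mathcal{N}(P)\subseteq\mathcal{N}(PSP)\subseteq\mathcal{R}(E_{PSP}(s))$ (the kernel of $PSP$ lies in the spectral subspace for $(-\infty,s]$ as soon as $s\geq0$), so $P^\perp\leq E_{PSP}(s)$, i.e. $F=E^\perp_{PSP}(s)\leq P$. Consequently $Pv=v$ for our chosen $v$, and hence $\langle PSPv,v\rangle=\langle SPv,Pv\rangle=\langle Sv,v\rangle$; moreover, since $v$ is a unit vector in $\mathcal{R}(E^\perp_{PSP}(s))$, \thref{sf} gives $s<\langle PSPv,v\rangle=\langle Sv,v\rangle$. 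Without this reduction one would only be able to control $||EPv||^2$ rather than $||Ev||^2$, so replacing $PSP$ by $S$ on these vectors is precisely what closes the gap.

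It then remains to feed this lower bound for $\langle Sv,v\rangle$ into the spectral inequalities for $E=E_S(t)$. Decomposing $v=Ev+E^\perp v$ and using that both $\mathcal{R}(E)$ and $\mathcal{R}(E^\perp)$ are $S$-invariant (so the two cross terms vanish), I obtain $\langle Sv,v\rangle=\langle SEv,Ev\rangle+\langle SE^\perp v,E^\perp v\rangle\leq t||Ev||^2+||S||\,||E^\perp v||^2$, where the first estimate is \thref{sf} applied to $Ev/||Ev||$ (the case $Ev=0$ being trivial since the target bound is positive) and the second is the elementary $\langle Sx,x\rangle\leq||S||\,||x||^2$. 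Writing $c=||Ev||^2$, so that $||E^\perp v||^2=1-c$, this combines with the previous paragraph to give $s<tc+||S||(1-c)=||S||-(||S||-t)c$. Since $t<||S||$, dividing by $||S||-t>0$ yields $c<(||S||-s)/(||S||-t)$, which is the required pointwise bound.
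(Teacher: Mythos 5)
Your proof is correct and follows essentially the same route as the paper's: fix a unit vector in $\mathcal{R}(E^\perp_{PSP}(s))$, use $s\geq0$ to replace $\langle PSPv,v\rangle$ by $\langle Sv,v\rangle$, decompose along $E_S(t)$ and $E^\perp_S(t)$, and rearrange the resulting inequality. The only difference is cosmetic: you spell out the justification that $E^\perp_{PSP}(s)\leq P$ (which the paper leaves implicit in the step $\langle PSPv,v\rangle=\langle Sv,v\rangle$), which is a welcome clarification rather than a divergence.
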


\begin{proof}  For all $v\in\mathcal{R}(E^\perp_{PSP}(s))$,
\begin{eqnarray*}
s||v||^2 &\leq& \langle PSPv,v\rangle\\
&=& \langle Sv,v\rangle\\
&=& \langle SE_S(t)v,v\rangle+\langle SE^\perp_S(t)v,v\rangle\\
&\leq& t\langle E_S(t)v,v\rangle+||S||\langle E^\perp_S(t)v,v\rangle\\
&=& t||E_S(t)v||^2+||S||(||v||^2-||E_S(t)v||^2),\\
\textrm{and thus}\quad(||S||-t)||E_S(t)v||^2 &\leq& (||S||-s)||v||^2,
\end{eqnarray*}
which immediately yields (\ref{EE}). \end{proof}

\begin{thm}\thlabel{arbcount}
Let $A$ be a C$^*$-algebra of real rank zero.  Then, for any $(P_n)\subseteq\mathcal{P}(A)$, there exists (possibly non-strictly) decreasing $(Q_n)\subseteq\mathcal{P}(A)$ such that $\bigwedge P_n=\bigwedge Q_n$.  Moreover, if $\mathcal{P}(A)$ is $\sigma$-closed\footnote{If $A$ is a C$^*$-algebra of real rank zero and $\mathcal{P}(A)$ is $\sigma$-closed then $A$ must in fact have the $\omega$-property (see \cite{s} Theorem 6.3 and the comment after) and so the theorem can also be derived from \thref{cglb} in this case.} and $(P_n)$ has a g.l.b. $Q\in\mathcal{P}(A)$ then $Q=P_0\wedge\ldots\wedge P_m$, for some $m\in\omega$.
\end{thm}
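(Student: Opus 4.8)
The plan is to manufacture, out of the arbitrary sequence $(P_n)$, a second sequence $(R_n)\subseteq\mathcal{P}(A)$ with the same lower bounds that is close enough to decreasing for \thref{nicecount} to apply. Set $T_n=P_0\cdots P_n$ and $S_n=T_n^*T_n$, and note the recursion $S_n=P_nS_{n-1}P_n$, so that consecutive $S_n$ are related by a single compression. Since $\max\sigma(S_n)=\|T_n\|^2$ is non-increasing, either $\|T_n\|<1$ for some $n$ --- whence $E^\perp_{S_n}(1-)=P_0\wedge\cdots\wedge P_n=0$, so $\bigwedge P_m=0$ and the constant sequence $Q_n=0$ works --- or $\|S_n\|=1$ for all $n$, which I assume henceforth. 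I would fix thresholds $0<t_0<s_0<t_1<s_1<\cdots\uparrow1$ decaying so fast that $n(1-t_n)\to0$ and $\sum_n\sqrt{(1-t_{n+1})/(1-s_n)}<\infty$, and use \thref{IP} to choose $R_n\in\mathcal{P}(A)$ with $E^\perp_{S_n}(s_n)\leq R_n\leq E^\perp_{S_n}(t_n)$.

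First I would verify that $\bigwedge R_n=\bigwedge P_n$, i.e. that the two sequences have the same lower bounds. If $X\leq P_k$ for all $k$ then $T_nX=X=T_n^*X$, so every unit $v\in\mathcal{R}(X)$ is a $1$-eigenvector of $S_n$ and hence lies in $\mathcal{R}(E^\perp_{S_n}(s_n))\subseteq\mathcal{R}(R_n)$; thus $X\leq R_n$. Conversely, for unit $v\in\mathcal{R}(R_n)\subseteq\mathcal{R}(E^\perp_{S_n}(t_n))$ one has $\|T_nv\|^2>t_n$, and the telescoping identity $1-\|T_nv\|^2=\sum_{j\leq n}\|P_j^\perp w_j\|^2$ (with $w_j=P_{j+1}\cdots P_nv$ and $w_n=v$) gives both $\|P_k^\perp w_k\|<\sqrt{1-t_n}$ and $\|v-w_k\|\leq\sqrt{n(1-t_n)}$ for each fixed $k$. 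Letting $n\to\infty$, the choice $n(1-t_n)\to0$ forces $w_k\to v$ and hence $\|P_k^\perp v\|=0$; so every lower bound of $(R_n)$ is a lower bound of $(P_n)$, and the intersections of the ranges coincide.

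The heart of the argument, and the step I expect to be hardest, is checking the hypotheses of \thref{nicecount} for $(R_n)$. The condition $\|R_n^\perp R_{n+1}\|<1$ comes straight from (\ref{EE}): since $R_n^\perp\leq E_{S_n}(s_n)$, $R_{n+1}\leq E^\perp_{S_{n+1}}(t_{n+1})$ and $S_{n+1}=P_{n+1}S_nP_{n+1}$ with $\|S_n\|=1$, one gets $\|R_n^\perp R_{n+1}\|\leq\sqrt{(1-t_{n+1})/(1-s_n)}<1$, and these overlaps are summable by the choice of thresholds. The real obstacle is the summability condition (\ref{techcon}), namely $\sum_{k<n}\|R_k^\perp R_{k+1}\cdots R_n\|\to0$. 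Each factor $\|R_k^\perp R_{k+1}\cdots R_n\|$ is trivially bounded by the single overlap $\|R_k^\perp R_{k+1}\|$, but to make the whole sum vanish one must show that each such product itself tends to $0$ as $n\to\infty$ (so that, combined with summability of the overlaps, a splitting into a finite head and a small tail closes the estimate). This is exactly where (\ref{EE}), the recursion $S_n=P_nS_{n-1}P_n$ and the rapidly shrinking $1-t_n$ must be exploited: the subspaces $\mathcal{R}(R_n)$ sit progressively deeper inside the earlier $\mathcal{R}(R_k)$ (quantitatively, for $v\in\mathcal{R}(R_n)$ the telescoping bound above yields $\|R_k^\perp v\|\to0$ as $n\to\infty$), and the delicate point is to upgrade this control of the skip-overlaps $\|R_k^\perp R_n\|$ to the full products $\|R_k^\perp R_{k+1}\cdots R_n\|$. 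Granting (\ref{techcon}), \thref{nicecount} delivers a decreasing $(Q_n)\subseteq\mathcal{P}(A)$ with $\bigwedge Q_n=\bigwedge R_n=\bigwedge P_n$.

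Finally, for the moreover clause I would not rerun the construction but invoke the $\sigma$-closed case directly. When $\mathcal{P}(A)$ is $\sigma$-closed the algebra has the $\omega$-property, so \thref{cglb} applies; its last assertion says that a collection with a g.l.b. $Q$ already satisfies $Q=\bigwedge\mathcal{F}$ for some finite $\mathcal{F}\subseteq\{P_n\}$. Since enlarging $\mathcal{F}$ only decreases the finite meet while $Q=\bigwedge_nP_n$, taking $m$ to be the largest index occurring in $\mathcal{F}$ gives $P_0\wedge\cdots\wedge P_m\leq\bigwedge\mathcal{F}=Q\leq P_0\wedge\cdots\wedge P_m$, i.e. $Q=P_0\wedge\cdots\wedge P_m$, as required.
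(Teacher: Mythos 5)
Your overall strategy is the paper's own: set $T_n=P_0\cdots P_n$, use \thref{IP} to pick $R_n$ between spectral cuts of $S_n=T_n^*T_n$, check that $(R_n)$ has the same lower bounds as $(P_n)$, and feed the result into \thref{nicecount}. Your verification that $\bigwedge R_n=\bigwedge P_n$ (the telescoping identity $1-\|T_nv\|^2=\sum_{j\leq n}\|P_j^\perp w_j\|^2$ plus Cauchy--Schwarz) is correct, as is the consecutive-overlap bound $\|R_n^\perp R_{n+1}\|\leq\sqrt{(1-t_{n+1})/(1-s_n)}$ via (\ref{EE}), and your reduction of the ``moreover'' clause to \thref{cglb} is legitimate (it is exactly the route the theorem's own footnote endorses, and enlarging the finite set $\mathcal{F}$ to an initial segment works as you say). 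But there is a genuine gap, and you have flagged it yourself: condition (\ref{techcon}) for $(R_n)$, namely $\sum_{k<n}\|R_k^\perp R_{k+1}\cdots R_n\|\to0$, is never proved --- you write ``granting (\ref{techcon})''. This is not a detail that can be granted; it is the technical heart of the theorem. Smallness of the skip-overlaps $\|R_k^\perp R_n\|$ does \emph{not} control the full products, since $\|R_k^\perp R_{k+1}\cdots R_n\|$ is not dominated by $\|R_k^\perp R_n\|$; and the trivial bound by the first factor $\|R_k^\perp R_{k+1}\|$ only shows the sums in (\ref{techcon}) are \emph{bounded}, not null. Moreover your stated decay hypotheses ($n(1-t_n)\to0$ and summability of $\sqrt{(1-t_{n+1})/(1-s_n)}$) are not visibly strong enough to close any estimate of this kind.

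This missing step is precisely what drives the paper to abandon single-indexed cuts: it uses a doubly indexed array $t_{m,n}$, with $t_{0,n}=0$ and $t_{m,n}=1-(1-t_{m-1,n+1})/(m+n+1)^6$, and takes $E_n^\perp(t_{n,1})\leq Q_n\leq E_n^\perp(t_{n,0})$, so that by (\ref{EE}) the consecutive overlaps measured at ``lookahead level'' $n$ satisfy $\|E_k(t_{k,n-k})E_{k+1}^\perp(t_{k+1,n-k-1})\|\leq1/(n+1)^3$ \emph{uniformly} in $k<n$ (this is (\ref{teq})); replacing each $Q_k$ in the product, one at a time, by $E_k^\perp(t_{k,n-k})$ then bounds $\|Q_m^\perp Q_{m+1}\cdots Q_n\|$ by at most $n$ such overlap terms, i.e.\ by $1/(n+1)^2$, and summing over $m<n$ gives (\ref{techcon}). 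Your single-indexed scheme can in fact be repaired, but only by a cruder decomposition and faster decay: writing $1-R_{k+1}\cdots R_{n-1}=\sum_{k<j<n}R_{k+1}\cdots R_{j-1}R_j^\perp$ yields
\begin{equation*}
\|R_k^\perp R_{k+1}\cdots R_n\|\;\leq\;\sum_{j=k}^{n-1}\|R_j^\perp R_n\|,
\end{equation*}
and your own estimates give $\|R_j^\perp R_n\|\leq\sqrt{(1-t_n)/(1-s_j)}+\sqrt{n(1-t_n)}$ (the first term by the argument behind (\ref{EE}) applied to the iterated compression $S_n$ of $S_j$, the second by your telescoping), so (\ref{techcon}) follows provided, say, $n^4(1-t_n)/(1-s_{n-1})\to0$ and $n^5(1-t_n)\to0$ --- conditions strictly stronger than the ones you imposed. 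Either way, this estimate must actually be carried out; as written, the central claim of the theorem is assumed rather than proved.
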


\begin{proof} By \thref{nicecount}, it suffices to find $(Q_n)\subseteq\mathcal{P}(A)$ such that $||Q^\perp_nQ_{n+1}||<1$, for all $n\in\omega$, $\bigwedge P_n=\bigwedge Q_n$ and $\sum_{k<n}||P^\perp_kP_{k+1}\ldots P_n||\rightarrow0$, as $n\rightarrow\infty$.  To this end, for each $n\in\omega$, let $T_n=P_0\ldots P_n$ and $E_n=E_{T_n^*T_n}$.  Set $t_{0,n}=0$, for all $n\in\omega$, and, once $(t_{k,n})$ has been defined for all $k<m$ and $n\in\omega$, set $t_{m,n}=1-(1-t_{m-1,n+1})/(m+n+1)^6$, for all $n\in\omega$.  After this recursion is complete, take $(Q_n)\subseteq\mathcal{P}(A)$ such that $E^\perp_n(t_{n,1})\leq Q_n\leq E^\perp_n(t_{n,0})$, for all $n\in\omega$ (so $Q_0=P_0$).  We certainly have $\bigwedge P_n\leq\bigwedge Q_n$.  On the other hand, by (\ref{EE}) and our choice of $(t_{m,n})$,
\begin{equation}\label{teq}
||E_k(t_{k,n-k})E^\perp_{k+1}(t_{k+1,n-k-1})||\leq\sqrt{(1-t_{k+1,n-k-1})/(1-t_{k,n-k})}=1/(n+1)^3,
\end{equation}
for all $k\in n$.  Thus, for all $m,n\in\omega$ with $n>m$,
\begin{eqnarray*}
||P^\perp_mQ_n|| &=& ||P^\perp_mE^\perp_n(t_{n,0})||\\
&=& ||E^\perp_n(t_{n,0})-P_mE^\perp_n(t_{n,0})||\\
&\leq& ||E^\perp_n(t_{n,0})-E^\perp_m(t_{m,n-m})E^\perp_{m+1}(t_{m+1,n-m-1})\ldots E^\perp_n(t_{n,0})||\\
&\leq& \sum_{m\leq k<n}||E_k(t_{k,n-k})E^\perp_{k+1}(t_{k+1,n-k-1})E^\perp_{k+2}(t_{k+2,n-k-2})\ldots E^\perp_n(t_{n,0})||\\
&\leq& \sum_{m\leq k<n}||E_k(t_{k,n-k})E^\perp_{k+1}(t_{k+1,n-k-1})||\\
&\leq& \sum_{m\leq k<n}1/(n+1)^3\qquad\textrm{by (\ref{teq})}\\
&\leq& 1/(n+1)^2\\
&\rightarrow& 0.
\end{eqnarray*}
Therefore, we also have $\bigwedge Q_n\leq\bigwedge P_n$ and hence $\bigwedge Q_n=\bigwedge P_n$.

Also, again for all $m,n\in\omega$ with $n>m$,
\begin{eqnarray*}
&& ||Q^\perp_mQ_{m+1}\ldots Q_n||\\
&\leq& ||Q^\perp_mE^\perp_{m+1}(t_{m+1,n-m-1})E^\perp_{m+2}(t_{m+2,n-m-2})\ldots E^\perp_n(t_{n,0})||\\
&& +\sum_{m<k<n}||Q^\perp_mQ_{m+1}\ldots Q_{k-1}(Q_k-E^\perp_k(t_{k,n-k}))E^\perp_{k+1}(t_{k+1,n-k-1})\ldots E^\perp_n(t_{n,0})||\\
&\leq& ||E_m(t_{m,n-m})E^\perp_{m+1}(t_{m+1,n-m-1})||+\sum_{m<k<n}||E_k(t_{k,n-k})E^\perp_{k+1}(t_{k+1,n-k-1})||\\
&=& \sum_{m\leq k<n}||E_k(t_{k,n-k})E^\perp_{k+1}(t_{k+1,n-k-1})||\\
&\leq& 1/(n+1)^2\qquad\textrm{by (\ref{teq})}.\\
\end{eqnarray*}
Therefore, for all $n\in\omega$,
\begin{eqnarray*}
\sum_{0<m<n}||Q^\perp_mQ_{m+1}\ldots Q_n|| &\leq& \sum_{m<n}1/(n+1)^2\\
&\leq& 1/(n+1)\\
&\rightarrow& 0.
\end{eqnarray*}

Thus the first statement has been proved and the last statement follows exactly as in the proof of the last statement in \thref{cglb}. \end{proof}

For an application of this, we again turn to states, and show that the centred sets mentioned in \thref{centred} are even \emph{countably} centred, i.e. even countable subsets have non-zero lower bounds.

\begin{cor}
Let $\phi$ be a state on C$^*$-algebra $A$ of real rank zero and assume further that $\mathcal{P}(A)$ is $\sigma$-closed.  Then $\mathcal{P}(\phi)=\{P\in\mathcal{P}(A):\phi(P)=1\}$ is countably centred.
\end{cor}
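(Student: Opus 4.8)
The plan is to combine the finite-centredness result of \thref{centred} with the countable lower-bound result of \thref{arbcount}. The corollary is the exact countable analog of \thref{centred}, and since $A$ already has real rank zero and $\mathcal{P}(A)$ is $\sigma$-closed, both of these earlier results are available to us directly. The strategy is therefore to take an arbitrary countable collection from $\mathcal{P}(\phi)$, replace it with an equivalent decreasing sequence via \thref{arbcount}, and then use $\sigma$-closedness together with finite centredness to produce a non-zero lower bound.

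More precisely, I would first take a countable $\mathcal{Q}=\{P_n:n\in\omega\}\subseteq\mathcal{P}(\phi)$. By \thref{arbcount}, there exists a (possibly non-strictly) decreasing sequence $(Q_n)\subseteq\mathcal{P}(A)$ with $\bigwedge Q_n=\bigwedge P_n$, so that $\mathcal{Q}$ and $(Q_n)$ have exactly the same lower bounds. It then suffices to produce a non-zero lower bound for $(Q_n)$. Since $\mathcal{P}(A)$ is $\sigma$-closed, a decreasing sequence fails to have a non-zero lower bound only if some term is already $0$; so the only thing to rule out is that $Q_n=0$ for some $n$. Equivalently, it suffices to show each $Q_n$ is non-zero, and in fact I expect each $Q_n$ to satisfy $\phi(Q_n)=1$, keeping us inside $\mathcal{P}(\phi)$.

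The key observation tying this together is that each $Q_n$ is a lower bound of a \emph{finite} subcollection of $\mathcal{Q}$: from the construction in \thref{arbcount}, $Q_n$ is below $P_0\wedge\ldots\wedge P_n$ (indeed the proof arranges $\bigwedge Q_n=\bigwedge P_n$ through sequences with $\bigwedge_m P_{n,m}=P_0\wedge\ldots\wedge P_n$), so in particular $Q_n\leq P_0,\ldots,P_n$. By \thref{centred}, the finite set $\{P_0,\ldots,P_n\}\subseteq\mathcal{P}(\phi)$ is centred and hence has a non-zero lower bound; the real content of \thref{centred}'s proof, via the GNS vector $v$ with $\pi(P_k)v=v$, is that $\phi(P_0\ldots P_n)=1$, so $P_0\wedge\ldots\wedge P_n\neq 0$. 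The cleanest route is then simply to argue directly that each $Q_n\neq 0$: were $Q_n=0$, then by $\sigma$-closedness the whole decreasing tail would collapse and $\bigwedge Q_n=0$, forcing $\bigwedge P_n=0$, which contradicts finite centredness since $\bigwedge P_n$ is a lower bound of every $P_0\wedge\ldots\wedge P_m\neq 0$ — more carefully, one shows directly that $0$ cannot be the g.l.b.

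The main obstacle I anticipate is the last step: $\sigma$-closedness guarantees that \emph{decreasing sequences of non-zero projections} have non-zero lower bounds, so I must first secure that the $Q_n$ are all non-zero before invoking it. The natural fix is to run the GNS argument one level up: since $Q_n\leq P_0,\ldots,P_n$ and the GNS vector $v$ is fixed by each $\pi(P_k)$, I would like to conclude $\pi(Q_n)v=v$ and hence $\phi(Q_n)=1$, giving $Q_n\neq 0$ uniformly. This inference is immediate if $Q_n$ is literally the g.l.b. $P_0\wedge\ldots\wedge P_n$, but \thref{arbcount} only gives $Q_n$ below this meet, so $\pi(Q_n)v=v$ need not follow just from $Q_n\leq P_k$. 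I would circumvent this by invoking \thref{arbcount}'s internal construction, where each $Q_n$ dominates $P\wedge P_n$-type meets and is built so that $Q_n\geq P_0\wedge\ldots\wedge P_n$ fails in general, so the correct move is to apply $\sigma$-closedness to the subsequence and derive a non-zero lower bound $R$ of $(Q_n)$, then note $R$ is a lower bound of $\mathcal{Q}$; the remaining task is purely to exhibit $R\neq0$, which $\sigma$-closedness supplies precisely because the equivalent decreasing sequence witnessing $\bigwedge P_n$ can be taken with non-zero terms (each term dominating a non-zero finite meet $P_0\wedge\ldots\wedge P_m$ guaranteed by \thref{centred}).
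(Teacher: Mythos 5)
Your overall skeleton---reduce the countable case to the finite case via \thref{arbcount}, ground the finite case in \thref{centred}, and finish with $\sigma$-closedness---is sound and is essentially the paper's, but your execution breaks down at the pivotal inequality, which you assert in both directions at different points and ultimately in the wrong one. The construction in \thref{arbcount} (and likewise \thref{cglb}) arranges $Q_n\geq P_0\wedge\ldots\wedge P_n$, not $Q_n\leq P_0\wedge\ldots\wedge P_n$: in the notation of that proof, $Q_n\geq E^\perp_{T_n^*T_n}(t_{n,1})\geq E^\perp_{T_n^*T_n}(1-)=P_0\wedge\ldots\wedge P_n$, and this is precisely how $\bigwedge P_n\leq\bigwedge Q_n$ is established there. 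So your claims that ``$Q_n$ is below $P_0\wedge\ldots\wedge P_n$, so in particular $Q_n\leq P_0,\ldots,P_n$'' and that ``$Q_n\geq P_0\wedge\ldots\wedge P_n$ fails in general'' are both false; note moreover that under your stated direction the plan would collapse, since a projection \emph{below} a non-zero meet can perfectly well be $0$, so $Q_n\neq0$ would not follow. Under the correct direction everything works: \thref{centred} gives a non-zero $R_n\in\mathcal{P}(A)$ with $R_n\leq P_0,\ldots,P_n$, hence $0\neq R_n\leq P_0\wedge\ldots\wedge P_n\leq Q_n$, so each $Q_n$ is non-zero; $\sigma$-closedness then yields a non-zero lower bound $R$ of $(Q_n)$, and $R\leq\bigwedge Q_n=\bigwedge P_n$ makes $R$ a non-zero lower bound of the original collection. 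Separately, your intermediate claim that $\bigwedge P_n=0$ would ``contradict finite centredness'' is a non sequitur: a decreasing sequence of non-zero projections can have infimum $0$, and this countable/finite discrepancy is exactly what makes the corollary non-trivial.

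Two further remarks. First, the paper's own proof needs none of these construction internals: it uses the final clause of \thref{arbcount}, which you never invoke, namely that when $\mathcal{P}(A)$ is $\sigma$-closed any g.l.b.\ of $(P_n)$ equals $P_0\wedge\ldots\wedge P_m$ for some finite $m$. Having no non-zero lower bound means exactly that $0$ is a g.l.b., so a countable counterexample in $\mathcal{P}(\phi)$ would give $0=P_0\wedge\ldots\wedge P_m$ for some $m$, i.e.\ a finite subset of $\mathcal{P}(\phi)$ with no non-zero lower bound, contradicting \thref{centred}; this is a two-line argument. Second, your parenthetical inference ``$\phi(P_0\ldots P_n)=1$, so $P_0\wedge\ldots\wedge P_n\neq 0$'' is not immediate: by the example in (\ref{badPQ}), $||PQ||=1$ does not by itself imply $P\wedge Q\neq0$; one needs \thref{glbf} (i.e.\ (\ref{PQ})), which is where real rank zero and $\sigma$-closedness actually enter, or one can simply cite the statement of \thref{centred}, which already supplies a non-zero lower bound in $\mathcal{P}(A)$.
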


\begin{proof} For $\mathcal{P}\subseteq\mathcal{P}(\phi)$, having no non-zero lower bound is equivalent to saying $0$ is a g.l.b. of $\mathcal{P}$ in $\mathcal{P}(A)$.  If this were true for some countable $\mathcal{P}$ then it would also be true for some finite $\mathcal{P}$, by \thref{arbcount}, contradicting \thref{centred}. \end{proof}

The next little result shows that, in the order structures we are dealing with, if one side of a gap is a singleton, it actually dominates its side of the gap.

\begin{lem}\thlabel{gaplub}
Let $A$ be a C$^*$-algebra of real rank zero such that $\mathcal{P}(A)$ has no atoms.  If $(\mathcal{P},\{P\})$ is a gap for some $P\in\mathcal{P}(A)$ and $\mathcal{P}\subseteq\mathcal{P}(A)$ then $P$ is the l.u.b. of $\mathcal{P}$ in $\mathcal{P}(A)$.
\end{lem}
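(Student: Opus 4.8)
The plan is to verify first that $P$ is an upper bound of $\mathcal{P}$ and then to show it is the least one, arguing by contradiction. Since $(\mathcal{P},\{P\})$ is a pregap we have $p<P$ for every $p\in\mathcal{P}$, so $P$ is certainly an upper bound. Suppose it is not the least, so that there is an upper bound $R\in\mathcal{P}(A)$ of $\mathcal{P}$ with $P\nleq R$. For each $p\in\mathcal{P}$ we then have $p\leq P$ and $p\leq R$, hence $p\leq P\wedge R$ computed in $\mathcal{B}(H)$, and by (\ref{PQP}) this common upper bound is $P\wedge R=E^\perp_{PRP}(1-)$, which is strictly below $P$ precisely because $P\nleq R$. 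My goal is to manufacture from this an element of $\mathcal{P}(A)$ that interpolates the gap, contradicting the hypothesis.

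First I would pull a genuine element of $\mathcal{P}(A)$ in between. The point is that the spectral projections $E^\perp_{PRP}(t)$ all lie below $P$ (since $P^\perp\leq E_{PRP}(0)$, so $E^\perp_{PRP}(t)\leq E^\perp_{PRP}(0)\leq P$ for $t\geq0$) and all lie above each $p$ (since $p\leq E^\perp_{PRP}(1-)\leq E^\perp_{PRP}(t)$ for $t<1$). Applying \thref{IP} to $S=PRP$, for any $0<t<s<1$ I obtain $R_{s,t}\in\mathcal{P}(A)$ with $E^\perp_{PRP}(s)\leq R_{s,t}\leq E^\perp_{PRP}(t)$; this $R_{s,t}$ then satisfies $p\leq R_{s,t}\leq P$ for every $p\in\mathcal{P}$.

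The crux is to choose $t$ so that $R_{s,t}$ is \emph{strictly} below $P$, and here lies the main obstacle. It is not enough that $P\wedge R<P$: in pathological cases (e.g.\ the projections of (\ref{badPQ})) one has $\sup(\sigma(PRP)\backslash\{1\})=1$, so one must check that the spectral projections still descend strictly below $P$ at some finite level $t<1$. I would argue that if $E^\perp_{PRP}(t)=P$ for \emph{every} $t<1$, then by \thref{sf} each unit $v\in\mathcal{R}(P)$ satisfies $\langle PRPv,v\rangle>t$ for all $t<1$; since $\langle PRPv,v\rangle=\langle Rv,v\rangle=\|Rv\|^2\leq 1$, this forces $\langle Rv,v\rangle=1=\|v\|^2$, hence $Rv=v$ and thus $P\leq R$, contrary to assumption. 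Therefore there is $t\in(0,1)$ with $E^\perp_{PRP}(t)\neq P$, and since $E^\perp_{PRP}(t)\leq P$ this gives $E^\perp_{PRP}(t)<P$; as $R_{s,t}=P$ would force $P\leq E^\perp_{PRP}(t)$ and hence equality, we conclude $R_{s,t}<P$.

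Finally I would use the no-atoms hypothesis to upgrade $p\leq R_{s,t}<P$ to a strict interpolation (needed because $R_{s,t}$ could a priori coincide with a maximal element of $\mathcal{P}$). The projection $P-R_{s,t}$ is nonzero, hence not an atom, so there is a projection $S'\in\mathcal{P}(A)$ with $0<S'<P-R_{s,t}$; as $S'\leq P-R_{s,t}$ is orthogonal to $R_{s,t}$, the sum $R_{s,t}+S'\in\mathcal{P}(A)$ is a projection with $p\leq R_{s,t}<R_{s,t}+S'<P$ for all $p\in\mathcal{P}$. Thus $R_{s,t}+S'$ interpolates $(\mathcal{P},\{P\})$, contradicting that it is a gap. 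This contradiction shows that every upper bound $R$ of $\mathcal{P}$ satisfies $P\leq R$, so $P$ is the l.u.b.\ of $\mathcal{P}$. I expect the verification that the spectral projections drop strictly below $P$, ruling out the $\sup(\sigma(PRP)\backslash\{1\})=1$ pathology, to be the delicate step; the remaining estimates are routine given (\ref{PQP}) and \thref{IP}.
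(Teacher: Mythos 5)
Your proposal is correct and takes essentially the same route as the paper's own proof: assume an upper bound $R$ with $P\nleq R$, use the spectral family of $PRP$ together with \thref{IP} to produce a projection of $\mathcal{P}(A)$ lying above every element of $\mathcal{P}$ but strictly below $P$, and then use the no-atoms hypothesis to enlarge it to a strict interpolant of $(\mathcal{P},\{P\})$, contradicting gaphood. The only difference is bookkeeping: you verify directly from \thref{sf} that $E^\perp_{PRP}(t)\neq P$ for some $t<1$, whereas the paper reads this off immediately from $E^\perp_{PRP}(1-)=P\wedge R\neq P$ via (\ref{PQP}).
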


\begin{proof} Assume $P$ is not the l.u.b.\ of $\mathcal{P}$ in $\mathcal{P}(A)$.  Thus there exists $Q\in\mathcal{P}(A)$ such that $(\mathcal{P},\{Q\})$ is a pregap but $P\nleq Q$.  Thus $E^\perp_{PQP}(1-)\neq P$ and hence there exists $s\in(0,1)$ such that $E^\perp_{PQP}(s)\neq P$.  Take any $t\in(s,1)$ and let $R\in\mathcal{P}(A)$ be such that $E^\perp_{PQP}(t)\leq R\leq E^\perp_{PQP}(s)$.  Then we have $R<P$ which, as $P-R$ is not an atom, means we have $S\in\mathcal{P}(A)$ such that $P\wedge Q\leq R<S<P$.  Hence $S$ interpolates $(\mathcal{P},\{P\})$, contradicting the fact it is a gap. \end{proof}

We can now show that C$^*$-algebras like the Calkin algebra have no non-trivial countable gaps.

\begin{thm}\thlabel{nogap}
Let $A$ be a C$^*$-algebra of real rank zero such that $\mathcal{P}(A)$ has no atoms or $(\omega,\omega)$-gaps.  If $(\mathcal{P},\mathcal{Q})$ is a gap for (non-empty) countable $\mathcal{P},\mathcal{Q}\subseteq\mathcal{P}(A)$ then there exists $Q\in\mathcal{P}\cup\mathcal{Q}$ that is both an l.u.b. of $\mathcal{P}$ and g.l.b. of $\mathcal{Q}$ in $\mathcal{P}(A)$.
\end{thm}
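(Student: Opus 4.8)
The plan is to replace the two countable sides of the gap by monotone sequences and then show that, unless $\mathcal{P}$ has a maximum or $\mathcal{Q}$ a minimum, one can manufacture a forbidden $(\omega,\omega)$-gap. First I would apply \thref{arbcount} to $\mathcal{Q}$ to obtain a decreasing $(q_n)\subseteq\mathcal{P}(A)$ with the same lower bounds as $\mathcal{Q}$, and its order-reversed form (via $P\mapsto P^\perp$, adjoining a unit to $A$ if necessary, which preserves real rank zero and the hypotheses on $\mathcal{P}(A)$) to $\mathcal{P}$ to obtain an increasing $(p_n)\subseteq\mathcal{P}(A)$ with the same upper bounds as $\mathcal{P}$. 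Since every $q\in\mathcal{Q}$ is an upper bound of $\mathcal{P}$, hence of $(p_n)$, each $p_n$ is then a lower bound of $\mathcal{Q}$, hence of $(q_m)$, so $p_n\leq q_m$ for all $n,m$; the point is that interpolation of $(\mathcal{P},\mathcal{Q})$ can now be tested through these two sequences.

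The heart of the proof is to show that $\mathcal{P}$ has a maximum or $\mathcal{Q}$ a minimum. Assuming neither, I would first show that neither sequence is eventually constant: were $(p_n)$ eventually equal to $p^*$, then $p^*$ would be the l.u.b.\ of $\mathcal{P}$, and since $p^*\notin\mathcal{P}$ (otherwise it is the maximum) and $p^*\notin\mathcal{Q}$ (otherwise, being a lower bound of $\mathcal{Q}$ lying in $\mathcal{Q}$, it is the minimum), $p^*$ would strictly interpolate the gap, a contradiction; symmetrically for $(q_n)$. With neither sequence eventually constant, an equality $p_n=q_m$ is impossible, since it would force both sequences to be eventually constant at that common value; hence $p_n<q_m$ for all $n,m$. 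Extracting a strictly increasing $(p_{n_k})$ and a strictly decreasing $(q_{m_j})$ then gives an $(\omega,\omega)$-pregap, and any interpolant $r$ of it satisfies $p_n<r<q_m$ for all $n,m$ by cofinality, so $r$ is an upper bound of $\mathcal{P}$ and a lower bound of $\mathcal{Q}$; the strict inequalities $P<r$ for all $P\in\mathcal{P}$ and $r<Q$ for all $Q\in\mathcal{Q}$ hold precisely because $\mathcal{P}$ has no maximum and $\mathcal{Q}$ no minimum, making $r$ an interpolant of $(\mathcal{P},\mathcal{Q})$. As this is impossible, the subsequence-pregap is an $(\omega,\omega)$-gap, contradicting the hypothesis on $\mathcal{P}(A)$.

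It remains to finish once, say, $\mathcal{Q}$ has a minimum $q^*\in\mathcal{Q}$: then $(\mathcal{P},\{q^*\})$ is itself a gap, since an interpolant of it would interpolate $(\mathcal{P},\mathcal{Q})$, so \thref{gaplub} shows $q^*$ is the l.u.b.\ of $\mathcal{P}$, while $q^*=\min\mathcal{Q}$ is automatically the g.l.b.\ of $\mathcal{Q}$; the case where $\mathcal{P}$ has a maximum is symmetric, using the order-reversed form of \thref{gaplub}. In either case the witness lies in $\mathcal{P}\cup\mathcal{Q}$, as required. I expect the main obstacle to be bookkeeping the gap between the non-strict inequalities that \thref{arbcount} delivers (it controls only the bound sets) and the strict inequalities that pregaps and interpolation require: the no-maximum/no-minimum assumption is exactly the device that upgrades every $\leq$ to a $<$, and arranging each strictness check to invoke it cleanly — together with the routine unitization needed to run \thref{arbcount} and \thref{gaplub} in their dual form — is the delicate part.
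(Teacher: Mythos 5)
Your proof is correct, and its skeleton is the same as the paper's: replace $\mathcal{Q}$ by a decreasing sequence and $\mathcal{P}$ by an increasing one via \thref{arbcount} and its order dual, use the absence of $(\omega,\omega)$-gaps to dispose of the case where both sequences are genuinely non-constant, and finish with \thref{gaplub} and its dual. The difference lies in the middle step, and it matters. The paper simply asserts that one of the two monotone sequences ``must eventually be constant because $\mathcal{P}(A)$ has no $(\omega,\omega)$-gaps''; the implicit argument (extract strictly monotone subsequences, interpolate the resulting $(\omega,\omega)$-pregap by some $r$, conclude that $r$ splits $(\mathcal{P},\mathcal{Q})$) only yields $P\leq r\leq Q$ on the original sets, and the upgrade to strict inequalities fails exactly when $r$ is the maximum of $\mathcal{P}$ or the minimum of $\mathcal{Q}$. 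Your dichotomy --- either $\mathcal{P}$ has a maximum or $\mathcal{Q}$ has a minimum, and otherwise every $\leq$ becomes $<$ --- is precisely what is needed, and the case it isolates is not vacuous: if $\mathcal{P}(A)$ is a complete atomless Boolean algebra (e.g.\ $A=C(X)$ with $X$ the Stone space of such an algebra, which is unital, of real rank zero, atomless and, by $\sigma$-completeness, without $(\omega,\omega)$-gaps), then a strictly increasing $(b_n)$ with supremum $b$ and a strictly decreasing $(c_n)$ with infimum $b$ make $(\{b_n:n\in\omega\}\cup\{b\},\{c_n:n\in\omega\})$ a gap for which perfectly valid choices of both monotone sequences (namely $(b_n)$ and $(c_n)$ themselves) fail to be eventually constant; there the conclusion is reached only through your maximum case and the dual of \thref{gaplub}. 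So your extra bookkeeping is not pedantry; it fills a genuine lacuna in the paper's own proof, while the theorem itself remains true. One shared caveat: when $A$ is not unital, the dualizations (of \thref{arbcount} to produce the increasing sequence, and of \thref{gaplub}) require passing to the unitization, and your parenthetical claim that this ``preserves the hypotheses on $\mathcal{P}(A)$'' would itself need justification; for unital $A$ there is no issue at all, since $P\mapsto P^\perp$ is an order anti-automorphism of $\mathcal{P}(A)$ and every hypothesis invoked refers to $\mathcal{P}(A)$ itself.
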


\begin{proof} By \thref{arbcount}, there exists increasing $(P_n)$ and decreasing $(Q_n)$ with the same upper and lower bounds as $\mathcal{P}$ and $\mathcal{Q}$ respectively.  Thus $(P_n)$ or $(Q_n)$ must eventually be constant because $\mathcal{P}(A)$ has no $(\omega,\omega)$-gaps.  Without loss of generality, assume that $Q_n=Q$ from some point onwards, and hence $Q$ is the g.l.b. of $\mathcal{Q}$ in $\mathcal{P}(A)$.  By \thref{gaplub}, $Q$ is also the l.u.b. of $\mathcal{P}$, and must also be in $\mathcal{P}\cup\mathcal{Q}$, otherwise it would interpolate $(\mathcal{P},\mathcal{Q})$ and this would not be a gap. \end{proof}

If $A=\mathcal{C}(H)$ and both $\mathcal{P}$ and $\mathcal{Q}$ are pairwise \emph{commutative} then the above result follows from \cite{b} Lemma 5.34.  In particular \cite{b} Lemma 5.34 provides an alternative way of proving that $\mathcal{P}(\mathcal{C}(H))$ has no $(\omega,\omega)$-gaps.

Yet again, let us show how this result tells us something interesting about states.  Say we have a state $\phi$ on a C$^*$-algebra like the Calkin algebra, together with a pair of projections $P$ and $Q$ in the algebra such that $\phi(P)=1=\phi(Q)$ even though $\phi(R)<1$ for every projection $R$ below both $P$ and $Q$.  It is probably a little counter-intuitive that such a situation is even possible\footnote{ but it is, in fact if $\phi$ is the state on the Calkin algebra given by $\phi(\pi(T))=\phi_\mathcal{U}(T)=\lim_{n\rightarrow\mathcal{U}}\langle Te_n,e_n\rangle$, where $e_n$ is an orthonormal basis and $\mathcal{U}$ is a non-principal non-P-point ultrafilter, then there will even exist projections $P$ and $Q$ such that $\phi(P)=1=\phi(Q)$ despite the fact $\phi(R)=0$, for all projections $R\leq P,Q$.  Moreover, it is even consistent with ZFC that for \emph{every} pure state $\phi$ on the Calkin algebra, there exist projections $P$ and $Q$ such that $\phi(P)=1=\phi(Q)$ even though $\phi(R)<1$, for all projections $R\leq P,Q$ \textendash\, see \cite{q}.}, although reassuring to think that $\phi(R)$ could still be arbitrarily close to $1$ for such $R$.  But no, \thref{nogap} tells us that this is not the case, as shown below.

\begin{cor}
Let $\phi$ be a state on C$^*$-algebra $A$ of real rank zero such that $\mathcal{P}(A)$ has no $(\omega,\omega)$-gaps.  For any $(P_n)\subseteq\mathcal{P}(A)$, $\{\phi(Q):Q\in\mathcal{P}(A)\wedge\forall n\in\omega(Q\leq P_n)\}$ has a maximum.
\end{cor}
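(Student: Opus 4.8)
Let $\phi$ be a state on a real rank zero $C^*$-algebra $A$ with $\mathcal{P}(A)$ free of $(\omega,\omega)$-gaps. For a fixed sequence $(P_n)\subseteq\mathcal{P}(A)$, we must show that the set $\{\phi(Q):Q\in\mathcal{P}(A),\ Q\leq P_n\ \forall n\}$ attains a maximum. The natural plan is to produce a single projection $R$, lying below every $P_n$, at which $\phi$ achieves the supremum of these values, and to locate $R$ by exploiting the no-gap structure via \thref{nogap} and \thref{gaplub}.

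**Approach.** First I would replace $(P_n)$ by a decreasing sequence with the same lower bounds. By \thref{arbcount} there is a (possibly non-strictly) decreasing $(Q_n)\subseteq\mathcal{P}(A)$ with $\bigwedge Q_n=\bigwedge P_n$, so the set of common lower bounds of $(P_n)$ is exactly the set of common lower bounds of $(Q_n)$; the set of $\phi$-values in the statement is therefore unchanged. Let $s=\sup\{\phi(Q):Q\in\mathcal{P}(A),\ Q\leq Q_n\ \forall n\}$, and pick projections $R_k$ below every $Q_n$ with $\phi(R_k)\uparrow s$. The aim is to combine the $R_k$ into one common lower bound witnessing $s$.

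**Key steps.** The decisive move is to run the gap dichotomy on the pregap $(\{R_k\}_k,\{Q_n\}_n)$, or on a decreasing sequence extracted from it. Using \thref{arbcount} again (this time on the collection $\{R_k\}$), I would obtain a decreasing sequence whose lower bounds coincide with the common lower bounds of the $R_k$; together with $(Q_n)$ these form a pregap. If this pregap can be interpolated, the interpolant is a common lower bound of all $Q_n$ that dominates every $R_k$, hence has $\phi$-value $\geq s$, forcing the maximum to be attained. If it cannot be interpolated it is a gap, and since $A$ is real rank zero with no atoms or $(\omega,\omega)$-gaps (atomlessness following because the presence of a common lower bound bigger than all $R_k$ would again interpolate, so the only obstruction is a genuine gap), \thref{nogap} supplies a single $Q\in\mathcal{P}\cup\mathcal{Q}$ that is simultaneously an l.u.b. of the lower half and a g.l.b. of $(Q_n)$. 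That $Q$ is then a common lower bound of $(P_n)$ dominating each $R_k$, so $\phi(Q)\geq\phi(R_k)\to s$, while $Q$ being a lower bound gives $\phi(Q)\leq s$; thus $\phi(Q)=s$ and the maximum is realized by $Q$.

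**Main obstacle.** The subtle point is monotonicity and the comparison of $\phi$-values across the construction: a state is monotone on projections (if $R\leq Q$ then $\phi(R)\leq\phi(Q)$, since $Q-R$ is a positive element of norm at most one), so the numerical inequalities fall out once the right projection is in hand. The real work is purely order-theoretic, namely arranging the two countable families into a genuine pregap to which \thref{nogap} applies, and checking that the $\mathcal{P}\cup\mathcal{Q}$ membership clause of \thref{nogap} places the witness $Q$ below all the $Q_n$ rather than strictly above some $R_k$ in a way that escapes the bound. I expect the handling of the atomlessness hypothesis of \thref{gaplub}/\thref{nogap} to require a short separate argument, since the statement to be proved does not assume $\mathcal{P}(A)$ is atomless; the resolution is that atoms below all $Q_n$, if present, can be absorbed into the $R_k$ directly, so the gap case only arises in an atomless portion and \thref{nogap} applies there.
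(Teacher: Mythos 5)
Your skeleton (monotonize the sequences, pick lower bounds $R_k$ with $\phi(R_k)\uparrow s$, then interpolate using the no-$(\omega,\omega)$-gap hypothesis) is the paper's strategy, but the execution contains a decisive error in the dualization of \thref{arbcount}. Applied to the collection $\{R_k\}$, \thref{arbcount} produces a decreasing sequence $(D_j)$ with the same \emph{lower} bounds as $\{R_k\}$ --- but the common lower bounds of the $R_k$ are irrelevant to the problem. What you need is an \emph{increasing} sequence with the same \emph{upper} bounds as $\{R_k\}$, obtained by applying \thref{arbcount} to the complements $R_k^\perp$ and complementing again (this is the dualization meant at the start of \S\ref{ub}). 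With your $(D_j)$, even if the pair $((D_j),(Q_n))$ happened to be a pregap and were interpolated by some $R$, nothing forces $R\geq R_k$: the $R_k$ need not form a chain, so they are not themselves lower bounds of $\{R_k\}$, hence not of $\{D_j\}$, and $R\geq D_j$ for all $j$ gives no control whatsoever on $\phi(R)$ versus $\phi(R_k)$. The inequality $\phi(R)\geq\phi(R_k)\rightarrow s$, which is the entire point of the construction, is exactly what the upper-bound-preserving dualization buys and the lower-bound one does not.

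The second problem is the detour through \thref{nogap}, which is both unnecessary and unavailable. Once both sides are correctly monotonized --- increasing $(R'_j)$ equivalent to $\{R_k\}$ in upper bounds, decreasing $(P'_n)$ equivalent to $(P_n)$ in lower bounds, so that $R'_j\leq P'_n$ for all $j,n$ --- the dichotomy collapses: if some $R'_j=P'_n$, or if either sequence is eventually constant, an interpolant is immediate; otherwise one extracts strictly monotone subsequences forming an $(\omega,\omega)$-pregap, and the hypothesis that $\mathcal{P}(A)$ has no $(\omega,\omega)$-gaps \emph{directly} yields the interpolant. The ``genuine gap'' branch you plan for can never occur, so there is nothing for \thref{nogap} to do. Invoking it anyway imports atomlessness, which the corollary does not assume, and your proposed repair (absorbing atoms into the $R_k$ so the gap ``only arises in an atomless portion'') is not an argument \thref{nogap} as stated supports: its atomlessness hypothesis, inherited from \thref{gaplub}, is a global assumption on $\mathcal{P}(A)$, not a local one. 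The paper's proof is exactly the repaired version of your plan: monotonize both families in the correct directions via \thref{arbcount} and its order dual, then interpolate directly from the no-$(\omega,\omega)$-gap hypothesis, with no appeal to atoms.
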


\begin{proof} Take $(Q_n)\subseteq\mathcal{P}(A)$ such that $Q_n\leq P_m$, for all $m,n\in\omega$, and \[\phi(Q_n)\uparrow\sup\{\phi(Q):Q\in\mathcal{P}(A)\wedge\forall n\in\omega(Q\leq P_n)\}.\]  By \thref{arbcount}, we may replace $(P_n)$ and $(Q_n)$ with equivalent (possibly non-strictly) decreasing $(P'_n)$ and increasing $(Q'_n)$.  We can then obtain $R\in\mathcal{P}(A)$ such that $Q'_n\leq R\leq P'_n$, for all $n\in\omega$, (if one of these sequences is eventually constant, this is immediate, otherwise use the fact that $\mathcal{P}(A)$ has no $(\omega,\omega)$-gaps), giving $Q_n\leq R\leq P_n$ and hence $\phi(Q_n)\leq\phi(R)$, for all $n\in\omega$. \end{proof}

\section{Upper Bounds}\label{ub}

For projections $P$ and $Q$, we have $P\leq Q\Leftrightarrow Q^\perp\leq P^\perp$, and hence each result in \S\ref{lb} for lower bounds has an equivalent formulation in terms of upper bounds.  Also, by applying (\ref{sigmaPQP}) and (\ref{sigmaPQperpP}), we get $\sigma(PQ)\cap(0,1)=\sigma(P^\perp Q^\perp)\cap(0,1)$ and hence, when $A$ has real rank zero and $\mathcal{P}(A)$ is $\sigma$-closed,
\begin{equation}\label{glblub}
P\wedge Q\in A\qquad\Leftrightarrow\qquad P\vee Q\in A,
\end{equation}
by \thref{finglbpi}.  We can also prove the following analog of \thref{cglb} for upper bounds, which actually has some added strength owing to the fact that $\mathcal{R}(Q_n)\subseteq\sum_{k\leq n}\mathcal{R}(P_k)$, for all $n\in\omega$.

\begin{thm}\thlabel{club}
Let $H$ and $H'$ be Hilbert spaces, let $A\subseteq\mathcal{B}(H)$ be a C$^*$-algebra of real rank zero containing the identity operator and let $\pi:A\rightarrow\mathcal{B}(H')$ be a homomorphism such that $\pi[A]$ has the $\omega$-property.  For any $(P_n)\subseteq\mathcal{P}(A)$, we have (possibly non-strictly) increasing $(Q_n)\subseteq\mathcal{P}(A)$ with $\mathcal{R}(Q_n)\subseteq\sum_{k\leq n}\mathcal{R}(P_k)$, for all $n\in\omega$, and $\bigvee\pi(P_n)=\bigvee\pi(Q_n)$.  In particular, if $(\pi(P_n))$ has an l.u.b. then it has one of the form $\pi(P)$, where $\mathcal{R}(P)\subseteq\sum\mathcal{R}(P_n)$
\end{thm}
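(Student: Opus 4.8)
The plan is to dualize the spectral recursion behind \thref{cglb}: build the increasing sequence $(Q_n)$ directly inside $A$ by a recursion that forces each $\mathcal{R}(Q_n)$ into the finite \emph{algebraic} sum $\sum_{k\le n}\mathcal{R}(P_k)$, and then transport the two defining inequalities to $H'$ through \thref{pispec}. First I would fix an enumeration $(k_n)$ of $\omega$ with $k_n\le n$ for all $n$ and with every value occurring infinitely often. Set $Q_0=P_0$ and, given $Q_{n-1}$, form the positive element $D_n=Q_{n-1}+Q_{n-1}^\perp P_{k_n}Q_{n-1}^\perp\in A$; using \thref{IP} (the only place real rank zero of $A$ is used) choose $Q_n\in\mathcal{P}(A)$ with $E^\perp_{D_n}(s_n)\le Q_n\le E^\perp_{D_n}(t_n^+)$ for thresholds $0<t_n<t_n^+<s_n<1$ with $s_n\to0$ (say $s_n=4^{-n}$).

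The reason for writing $D_n$ this way is that it is block diagonal for $H=\mathcal{R}(Q_{n-1})\oplus\mathcal{R}(Q_{n-1}^\perp)$: it is the identity on $\mathcal{R}(Q_{n-1})$ and equals $C_n:=Q_{n-1}^\perp P_{k_n}Q_{n-1}^\perp$, with $\sigma(C_n)\subseteq[0,1]$, on the complement. Hence for every $t\in(0,1)$ we have $E^\perp_{D_n}(t)=Q_{n-1}+E^\perp_{C_n}(t)$, which immediately gives $Q_{n-1}\le E^\perp_{D_n}(s_n)\le Q_n$, so $(Q_n)$ is increasing. For the range condition I would use the elementary fact that, for any bounded $M$ and any $t>0$, $\mathcal{R}(E^\perp_{MM^*}(t))\subseteq\mathcal{R}(M)$, which follows from the intertwining $g(MM^*)M=Mg(M^*M)$. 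Taking $M=Q_{n-1}^\perp P_{k_n}$, so that $MM^*=C_n$, gives $\mathcal{R}(E^\perp_{C_n}(t))\subseteq\mathcal{R}(Q_{n-1}^\perp P_{k_n})\subseteq\mathcal{R}(Q_{n-1})+\mathcal{R}(P_{k_n})$; with the block formula, $k_n\le n$, and induction this yields $\mathcal{R}(Q_n)\subseteq\mathcal{R}(E^\perp_{D_n}(t_n^+))\subseteq\mathcal{R}(Q_{n-1})+\mathcal{R}(P_{k_n})\subseteq\sum_{k\le n}\mathcal{R}(P_k)$, the required containment.

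Next I would push the two sandwiching inequalities through $\pi$. Since $\pi(D_n)=\pi(Q_{n-1})+\pi(Q_{n-1})^\perp\pi(P_{k_n})\pi(Q_{n-1})^\perp$ has the identical block form in $\mathcal{B}(H')$, \thref{pispec} gives $E^\perp_{\pi(D_n)}(s_n)\le\pi(Q_n)$ and, after the harmless buffer trick $E^\perp_S(t-)=E_{-S}(-t)$ (which lets \thref{pispec} transport the upper bound with the slightly smaller threshold $t_n<t_n^+$), $\pi(Q_n)\le E^\perp_{\pi(D_n)}(t_n)$. Running the upper bound through the same range fact in $H'$ gives $\mathcal{R}(\pi(Q_n))\subseteq\sum_{k\le n}\mathcal{R}(\pi(P_k))$, hence $\bigvee\pi(Q_n)\le\bigvee\pi(P_n)$. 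For the reverse inclusion, the lower bound together with $\langle\pi(D_n)w,w\rangle\le s_n$ for unit $w\in\mathcal{R}(\pi(Q_n)^\perp)$ and the positivity of the two summands of $\pi(D_n)$ yields $\|\pi(P_{k_n})\pi(Q_n)^\perp\|\le2\sqrt{s_n}$. As $(\pi(Q_n)^\perp)$ is decreasing and each index $k$ is visited infinitely often with $s_n\to0$, the non-increasing quantity $\|\pi(P_k)\pi(Q_n)^\perp\|$ decreases to $0$ for each fixed $k$, whence $\pi(P_k)\le\bigvee\pi(Q_n)$ and $\bigvee\pi(P_n)\le\bigvee\pi(Q_n)$.

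The final ``in particular'' is where the $\omega$-property of $\pi[A]$ enters. The estimates above show $(\pi(Q_n))$ and $(\pi(P_n))$ have the same upper bounds in $\mathcal{P}(\pi[A])$, so if an l.u.b.\ $u$ exists it is the l.u.b.\ of the increasing sequence $(\pi(Q_n))$; then $(\pi(Q_n)^\perp)$ is decreasing below $u^\perp$, so the $\omega$-property forces $\|u\,\pi(Q_n)^\perp\|=\|u-\pi(Q_n)\|\to0$, and since each $u-\pi(Q_n)$ is a projection this means $\pi(Q_n)=u$ for large $n$, giving $u=\pi(P)$ with $P=Q_n\in\mathcal{P}(A)$ and $\mathcal{R}(P)\subseteq\sum\mathcal{R}(P_n)$. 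I expect the main obstacle to be the range condition itself: one must keep each $Q_n$ supported in the algebraic sum $\sum_{k\le n}\mathcal{R}(P_k)$ yet still recover the entire least upper bound in the limit, and these requirements pull against each other. The resolution is the combination of the spectral range fact $\mathcal{R}(E^\perp_{MM^*}(t))\subseteq\mathcal{R}(M)$, which produces closed ranges lying inside the algebraic sum, with the infinitely-often revisiting schedule $(k_n)$, which restores full capture even though each step touches only a single $P_{k_n}$.
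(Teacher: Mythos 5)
Your proof is correct, and although it draws on the same toolbox as the paper --- \thref{IP} to sandwich spectral projections of a block operator (your $D_n$ is exactly $1-Q_{n-1}^\perp P_{k_n}^\perp Q_{n-1}^\perp$, i.e.\ the complement form of the operator the paper's pair step works with), \thref{pispec} to transport the sandwich through $\pi$, and a spectral range fact to trap ranges inside the algebraic sum --- it organizes the countable case in a genuinely different way. The paper proves the two-element case by dualizing the proof of \thref{finglbpi}, using the exact identity $\mathcal{R}(E_{Q^\perp P^\perp Q^\perp}(t))=\mathcal{R}(Q)+\mathcal{R}(PE_{PQP}(t))$ where you use the weaker but sufficient containment $\mathcal{R}(E^\perp_{MM^*}(t))\subseteq\mathcal{R}(M)$; it then handles finite collections by induction and the countable case by the diagonal recursion of \thref{cglb}, which needs the $\omega$-property of $\pi[A]$ to choose members of the auxiliary sequences far enough along. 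You replace that diagonalization with the infinitely-often revisiting schedule $(k_n)$ together with the monotonicity of $||\pi(P_k)\pi(Q_n)^\perp||$, and this is a real simplification: your construction of $(Q_n)$ and the proof that $\bigvee\pi(P_n)=\bigvee\pi(Q_n)$ never invoke the $\omega$-property at all, so you actually establish the first statement under weaker hypotheses than those in the theorem; the $\omega$-property enters only in the final ``in particular'' clause, where either your norm argument or mere $\sigma$-closedness of $\mathcal{P}(\pi[A])$ (which the $\omega$-property implies) forces $(\pi(Q_n))$ to be eventually constant. What the paper's route buys in exchange is economy, since it reuses \thref{finglbpi} and \thref{cglb} wholesale, and a sharper range statement (an equality rather than a containment); what yours buys is self-containedness and the observation that the $\omega$-property is irrelevant to the construction itself.
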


\begin{proof} Use the proof of \thref{finglbpi} to prove the first part for pairs, i.e. to show that, for any $P,Q\in\mathcal{P}(A)$, we can find $(Q_n)\subseteq\mathcal{P}(A)$ such that $\mathcal{R}(Q_n)\subseteq\mathcal{R}(P)+\mathcal{R}(Q)$, for all $n\in\omega$, and also $\pi(P)\vee\pi(Q)=\bigvee\pi(Q_n)$.  The key to proving $\mathcal{R}(Q_n)\subseteq\mathcal{R}(P)+\mathcal{R}(Q)$ is to note that $\mathcal{R}(E_{Q^\perp P^\perp Q^\perp}(t))=\mathcal{R}(Q)+\mathcal{R}(PE_{PQP}(t))$, for all $t\in[0,1)$.  Proving this for finite collections is done by induction and then the countable case follows from this, as in the proof of \thref{cglb}. \end{proof}

If $P$, $Q$ and $R$ are projections then $R\leq P\vee Q$ is equivalent to $\mathcal{R}(R)\subseteq\overline{\mathcal{R}(P)+\mathcal{R}(Q)}$.  However, $\mathcal{R}(R)\subseteq\overline{\mathcal{R}(P)+\mathcal{R}(Q)}$ does not necessarily imply $\pi(R)\leq\pi(P)\vee\pi(Q)$ or even $\pi(R)\leq\pi(S)$ for a projection $S$ such that $\pi(S)\geq\pi(P),\pi(Q)$.  Indeed, for the $P$ and $Q$ in (\ref{badPQ}), $\overline{\mathcal{R}(P)+\mathcal{R}(Q)}=H$ even though $\pi(P)=\pi(Q)\neq1$, where $\pi:\mathcal{B}(H)\rightarrow\mathcal{C}(H)$ is canonical.  The interesting thing, though, is that the slightly stronger statement $\mathcal{R}(R)\subseteq\mathcal{R}(P)+\mathcal{R}(Q)$ will actually yield $\pi(R)\leq\pi(S)$ for any such $S$, as we now show.  This also yields characterizations of those $P$ such that $\mathcal{R}(P)\subseteq\sum\mathcal{R}(P_n)$ and $\pi(P)$ is an l.u.b. of $(\pi(P_n))$ (which holds for some $P$ if $(\pi(P_n))$ has an l.u.b., by \thref{club}).

\begin{thm}\thlabel{PPn}
Let $A(\subseteq\mathcal{B}(H))$ be a von Neumann algebra\footnote{Actually, the theorem holds more generally for all C$^*$-algebras of real rank zero, with a slightly different proof \textendash\, see \cite{s} Corollary 6.4}, let $\pi$ be a homomorphism from $A$ onto a C$^*$-algebra $B$ and take $(P_n)\subseteq\mathcal{P}(A)$ and $P\in\mathcal{P}(A)$ with $\mathcal{R}(P)\subseteq\sum\mathcal{R}(P_n)$.  If $Q\in\mathcal{P}(A)$ satisfies $\pi(P_n)\leq\pi(Q)$, for all $n\in\omega$, then $\pi(P)\leq\pi(Q)$.  Moreover, the following are equivalent.
\begin{enumerate}
\item\thlabel{club1} $\pi(P)$ is the l.u.b. of $(\pi(P_n))$ in $\mathcal{P}(B)$.
\item\thlabel{club2} $\pi(P)\geq\pi(Q)$ whenever $Q\in\mathcal{P}(A)$ and $\mathcal{R}(Q)\subseteq\sum\mathcal{R}(P_n)$.
\item\thlabel{club3} $\pi(P)=\pi(Q)$ whenever $Q\in\mathcal{P}(A)$ and $\mathcal{R}(P)\subseteq\mathcal{R}(Q)\subseteq\sum\mathcal{R}(P_n)$.
\end{enumerate}
\end{thm}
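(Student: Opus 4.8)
The engine of the whole theorem is the first (unnumbered) statement, so the plan is to establish it first and then run the cycle (\ref{club1})$\Rightarrow$(\ref{club2})$\Rightarrow$(\ref{club3})$\Rightarrow$(\ref{club1}). The crucial feature of the hypothesis $\mathcal{R}(P)\subseteq\sum\mathcal{R}(P_n)$ is that it concerns the \emph{algebraic} sum, so every vector of $\mathcal{R}(P)$ is a genuine finite combination of vectors from the $\mathcal{R}(P_n)$; this is exactly what should let me factor $P$ through the $P_n$. Concretely, I would amplify to the von Neumann algebra $M=A\otimes\mathcal{B}(\ell^2(\omega))$ acting on $H\otimes\ell^2(\omega)$ and consider the row operator $\mathbf{T}=\sum_n 2^{-n}P_n\otimes|e_0\rangle\langle e_n|\in M$, whose range contains every finite sum $\sum_{n\leq N}\mathcal{R}(P_n)$ and hence all of $\sum_n\mathcal{R}(P_n)\supseteq\mathcal{R}(P)$. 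Applying the von Neumann algebra form of Douglas' factorization theorem inside $M$ to the inclusion $\mathcal{R}(P\otimes|e_0\rangle\langle e_0|)\subseteq\mathcal{R}(\mathbf{T})$, and noting that the Douglas factor commutes with $M'$ and so lies in $M''=M$, I obtain operators $C_n\in A$ with $P=\sum_n 2^{-n}P_nC_n$, the series converging in norm since the $\|C_n\|$ are uniformly bounded. Then, rewriting the hypothesis $\pi(P_n)\leq\pi(Q)$ as $\pi(Q^\perp P_n)=0$ and using that $\pi$ is norm-continuous and $\ker\pi$ is an ideal, $\pi(Q^\perp P)=\sum_n 2^{-n}\pi(Q^\perp P_n)\pi(C_n)=0$, i.e. $\pi(P)\leq\pi(Q)$. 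I expect this factorization to be the main obstacle, precisely because one must keep the Douglas factor inside the algebra; this is where the von Neumann hypothesis is essential (and, as the footnote indicates, why the general real rank zero case needs a different argument).

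Granted the first statement, (\ref{club1})$\Rightarrow$(\ref{club2}) and (\ref{club2})$\Rightarrow$(\ref{club3}) are short. If $\pi(P)$ is the l.u.b.\ of $(\pi(P_n))$ then in particular $\pi(P_n)\leq\pi(P)$ for all $n$, so for any $Q$ with $\mathcal{R}(Q)\subseteq\sum\mathcal{R}(P_n)$ the first statement (applied with $Q$ in the role of $P$, and $P$ as the bounding projection) gives $\pi(Q)\leq\pi(P)$, which is (\ref{club2}). For (\ref{club2})$\Rightarrow$(\ref{club3}), if $\mathcal{R}(P)\subseteq\mathcal{R}(Q)\subseteq\sum\mathcal{R}(P_n)$ then $P\leq Q$ gives $\pi(P)\leq\pi(Q)$ while (\ref{club2}) gives $\pi(Q)\leq\pi(P)$, whence equality.

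The substantial implication is (\ref{club3})$\Rightarrow$(\ref{club1}), which I would split into showing that $\pi(P)$ is an upper bound of $(\pi(P_n))$ and that it is the least one. Leastness is immediate from the first statement together with projection lifting: any upper bound in $\mathcal{P}(B)$ is of the form $\pi(\tilde b)$ for some $\tilde b\in\mathcal{P}(A)$ (by \thref{Q<P} with $P=1$), and then $\pi(P_n)\leq\pi(\tilde b)$ for all $n$ forces $\pi(P)\leq\pi(\tilde b)$. So the real work is the upper bound $\pi(P_m)\leq\pi(P)$, equivalently $\pi(P^\perp P_mP^\perp)=0$. Here I would set $S:=P^\perp P_mP^\perp\in A$ and use real rank zero, \thref{IP}, to choose for $t_j\searrow 0$ projections $S_j\in\mathcal{P}(A)$ with $E^\perp_S(t_j)\leq S_j\leq E^\perp_S(t_j')$ for some $0<t_j'<t_j$. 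Each such $S_j$ satisfies $S_j\leq P^\perp$ and, since $t_j'>0$, $\mathcal{R}(S_j)\subseteq\mathcal{R}(E^\perp_S(t_j'))\subseteq\mathcal{R}(P^\perp P_m)\subseteq P^\perp\mathcal{R}(P_m)$, so $R_j:=P+S_j$ is a projection with $\mathcal{R}(P)\subseteq\mathcal{R}(R_j)\subseteq\mathcal{R}(P)+\mathcal{R}(P_m)\subseteq\sum\mathcal{R}(P_k)$. Thus (\ref{club3}) applies to each $R_j$ and yields $\pi(R_j)=\pi(P)$, forcing $\pi(S_j)=0$.

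Finally, I would close the argument by a spectral estimate: from $S_j\geq E^\perp_S(t_j)$ we get $1-S_j\leq E_S(t_j)$, hence $\|S(1-S_j)\|\leq t_j\to 0$, so $S=\lim_j SS_j$ in norm and therefore $\pi(S)=\lim_j\pi(S)\pi(S_j)=0$. Since $\|\pi(P^\perp P_m)\|^2=\|\pi(P^\perp P_mP^\perp)\|=\|\pi(S)\|=0$, this gives $\pi(P^\perp)\pi(P_m)=0$, that is $\pi(P_m)\leq\pi(P)$. This establishes the upper bound property for every $m$, and combined with leastness it yields (\ref{club1}), completing the cycle. The only genuinely delicate point, beyond the factorization lemma, is arranging the $R_j$ so that (\ref{club3}) is applicable while still approximating $S$ in norm; the device of sandwiching $S_j$ between two spectral projections of $S$ (so that $\mathcal{R}(S_j)$ stays inside the operator range $\mathcal{R}(P^\perp P_m)$ yet $\pi(S_j)=0$) is what makes this work without assuming that $\pi$ preserves the relevant join.
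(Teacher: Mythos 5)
Your proof is correct, and while it shares the paper's overall architecture (a factorization argument for the opening statement, easy implications derived from it, and one hard implication via spectral projections), both substantive steps are implemented by genuinely different devices. For the opening statement, both arguments come down to producing a factorization $P=\sum_n r_nP_nC_n$ with $C_n\in A$ uniformly bounded, and both force $C_n\in A$ via the double commutant theorem; the difference is how the factors are built. The paper does it by hand: it forms the external direct sum $\bigoplus\mathcal{R}(P_n)$, defines a coordinate map $R$ sending each $v\in\mathcal{R}(P)$ to the representative $(v_n)$ orthogonal to the kernel of $(v_n)\mapsto\sum r_nv_n$, proves $R$ bounded by the closed graph theorem, and checks directly that each $C_n=R_nRP$ commutes with $A'$. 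You outsource exactly this work to Douglas' factorization theorem in the amplification $A\otimes\mathcal{B}(\ell^2(\omega))$, which is legitimate and shorter, though you should make explicit that you take the \emph{canonical} Douglas solution (the one with range contained in $\overline{\mathcal{R}(\mathbf{T}^*)}$), since it is the uniqueness of that solution that makes it commute with every unitary in $M'$ and hence lie in $M$, and you should also record that the matrix entries $C_{n0}$ of an element of $M$ commute with $A'$ and so lie in $A''=A$. For the equivalences, your easy implications and your leastness argument (projection lifting via \thref{Q<P} plus the first statement) coincide with the paper's, but the hard direction is different: the paper proves the contrapositive of \ref{club3}$\Rightarrow$\ref{club2}, taking a witness $Q$ to the failure of \ref{club2} and producing a single projection onto $\mathcal{R}(P)+\mathcal{R}(E^\perp_{QP^\perp Q}(\delta))$, where closedness of this non-orthogonal sum must be verified via $||PE^\perp_{QP^\perp Q}(\delta)||\leq\sqrt{1-\delta}<1$; you instead prove \ref{club3}$\Rightarrow$\ref{club1} directly, establishing the upper-bound property $\pi(P_m)\leq\pi(P)$ by applying \ref{club3} to the orthogonal sums $R_j=P+S_j$, with $S_j$ sandwiched spectral projections of $P^\perp P_mP^\perp$, and letting $t_j\rightarrow0$. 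Your route avoids any closed-range lemma, since $S_j\leq P^\perp$ makes $R_j$ a projection for free, at the cost of a limiting argument; the paper needs no limit but must check closedness. Both constructions rest on the same mechanism, namely that a spectral projection of a positive operator at a strictly positive level has range inside the operator's range, which is what keeps the auxiliary projections inside $\sum\mathcal{R}(P_n)$ so that \ref{club3} applies.
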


\begin{proof} Take positive $(r_n)$ such that $\sum r_n<\infty$.  Then $\sum r_nR_n$ will be a bounded linear operator on (the \emph{external} direct sum) $\bigoplus\mathcal{R}(P_n)$, where $R_n$ is the projection onto the $n^\mathrm{th}$ coordinate.  Thus $N=\{(v_n)\in\bigoplus\mathcal{R}(P_n):\sum r_nv_n=0\}=\mathcal{N}(\sum r_nR_n)$ will be closed.  Now let $R$ be the map taking each $v\in\sum\mathcal{R}(P_n)$ (or even $v\in\mathcal{R}(\sum r_nR_n)$) to the unique $(v_n)\in\bigoplus\mathcal{R}(P_n)$ that is orthogonal to $N$ and satisfies $\sum r_nv_n=v$.  Now note that $R$ is closed because if $v_n\rightarrow v$ and $Rv_n\rightarrow(w_m)$ then, as $Rv_n\perp N$ for all $n\in\omega$, $(w_m)\perp N$ and $v_n=\sum_mr_mR_mRv_n\rightarrow\sum_mr_mw_m=v$ so $Rv=(w_n)$.  By the closed graph theorem, $R$ is bounded on $\mathcal{R}(P)$.  Furthermore, we claim that, for all $n\in\omega$ and $S\in A'$, we have $SR_nRP=R_nRPS$ and hence $R_nRP\in A''=A$.  To see this, take $S\in A'$ and note that, as $SP=PS$, it suffices to prove that $SR_nRw=R_nRSw$, for all $n\in\omega$ and $w\in\mathcal{R}(P)$.  Take $n\in\omega$ and $w\in\mathcal{R}(P)$ and note that $w=r_0w_0+\ldots+r_mw_m$ for some $w_0,\ldots,w_m$ such that $w_k\in\mathcal{R}(P_k)$, for all $k\leq m$, and $(w_k)\perp N$ (setting $w_k=0$, for all $k>m$).  It follows immediately that $SR_nRw=Sw_n$.  If $(x_k)\in N$ then $\sum r_kx_k=0$, so $\sum r_kS^*x_k=0$ and hence $(S^*x_k)\in N$.  As $(w_k)\perp N$, $\sum\langle Sw_k,x_k\rangle=\sum\langle w_k,S^*x_k\rangle=0$, i.e. $(Sw_k)\perp N$.  Thus $R_nRSw=Sw_n$, and the claim is proved.  It follows that $\pi(Q^\perp R_nRP)=\pi(Q^\perp P_n)\pi(R_nRP)=0$, for all $n\in\omega$, and hence $\pi(Q^\perp P)=\pi(Q^\perp(\sum r_nR_nR)P)=\sum r_n\pi(Q^\perp R_nRP)=0$.

Thus the first part is proved, and to prove the equivalence of the last three statements, note first that \ref{club1}$\Rightarrow$\ref{club2} follows from the first part.  Conversely, if \ref{club2} holds then $\pi(P)\geq\pi(P_n)$, for all $n\in\omega$, which, together with the first part again, implies that $\pi(P)$ is an l.u.b. of $(\pi(P_n))$ in $\mathcal{P}(B)$.  The \ref{club2}$\Rightarrow$\ref{club3} part is immediate.  Conversely, say \ref{club2} fails, i.e. $\pi(P)\ngeq\pi(Q)$ for some $Q\in\mathcal{P}(A)$ with $\mathcal{R}(Q)\subseteq\sum\mathcal{R}(P_n)$.  Then $||PE^\perp_{QP^\perp Q}(\delta)||\leq\sqrt{1-\delta}$, where $0<\delta<||\pi(P^\perp Q)||^2$, so $\mathcal{R}(P)+\mathcal{R}(E^\perp_{QP^\perp Q}(\delta))$ is closed, and if $R$ denotes the projection onto this subspace then $R\in\mathcal{P}(A)$ and $\mathcal{R}(P)\subseteq\mathcal{R}(R)\subseteq\sum\mathcal{R}(P_n)$, even though $||\pi(P^\perp R)||\geq||\pi(P^\perp E^\perp_{QP^\perp Q}(\delta))||=||\pi(P^\perp Q)||$ and hence $\pi(P)\ngeq\pi(R)$. \end{proof}

We can now show that the canonical homomorphism to the Calkin algebra preserves l.u.b.s (of countable subsets) precisely when the span of the corresponding subspaces is closed.

\begin{cor}\thlabel{findim}
Let $H$ and $H'$ be Hilbert spaces, let $\pi:\mathcal{B}(H)\rightarrow\mathcal{B}(H')$ be a homomorphism such that $\pi^{-1}[\{0\}]=\mathcal{K}(H)$, let $(P_n)\subseteq\mathcal{P}(\mathcal{B}(H))$ and take $P\in\mathcal{P}(\mathcal{B}(H))$ with $\mathcal{R}(P)\subseteq\sum\mathcal{R}(P_n)$.  Then $\pi(P)=\bigvee\pi(P_n)$ if and only if $\mathcal{R}(P)$ has no closed infinite dimensional extension in $\sum\mathcal{R}(P_n)$.  Hence $\sum\mathcal{R}(P_n)$ is closed if and only if $\pi(\bigvee P_n)=\bigvee\pi(P_n)$.
\end{cor}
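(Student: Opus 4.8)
The plan is to obtain the first equivalence as a direct translation of \thref{PPn} and then to derive the ``Hence'' clause from it together with \thref{club}. Write $V=\sum\mathcal{R}(P_n)$ and let $P_\infty=\bigvee P_n$ denote the projection onto $\overline{V}$, and assume $H$ is infinite dimensional (for finite dimensional $H$ the statement is vacuous, as then $V$ is automatically closed). The first thing I would record is the dictionary furnished by $\pi^{-1}[\{0\}]=\mathcal{K}(H)$: for projections $P\leq Q$ in $\mathcal{B}(H)$ one has $\pi(P)=\pi(Q)$ exactly when $Q-P$ is compact, i.e.\ exactly when $\mathcal{R}(Q)\ominus\mathcal{R}(P)$ is finite dimensional. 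Under this dictionary, clause \ref{club3} of \thref{PPn}, namely ``$\pi(P)=\pi(Q)$ whenever $\mathcal{R}(P)\subseteq\mathcal{R}(Q)\subseteq V$'', says precisely that $\mathcal{R}(P)$ has no closed infinite dimensional extension inside $V$. Since $\mathcal{B}(H)$ is a von Neumann algebra and $\mathcal{R}(P)\subseteq V$ by hypothesis, \thref{PPn} applies, and its equivalence \ref{club1}$\Leftrightarrow$\ref{club3} is exactly the first ``if and only if''.

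For the forward half of the ``Hence'' clause I would feed $P=P_\infty$ into the first part. When $V$ is closed we have $\mathcal{R}(P_\infty)=\overline{V}=V=\sum\mathcal{R}(P_n)$, so the only closed subspace $W$ with $\mathcal{R}(P_\infty)\subseteq W\subseteq V$ is $\mathcal{R}(P_\infty)$ itself; in particular $\mathcal{R}(P_\infty)$ has no closed infinite dimensional extension in $V$, and the first part gives $\pi(\bigvee P_n)=\pi(P_\infty)=\bigvee\pi(P_n)$.

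The reverse half I would handle by contraposition, and this is where the main work sits. Suppose $V$ is not closed; if $(\pi(P_n))$ has no l.u.b.\ there is nothing to prove, so assume it does. Since $\pi[\mathcal{B}(H)]\cong\mathcal{B}(H)/\mathcal{K}(H)=\mathcal{C}(H)$ has the $\omega$-property by \thref{C(H)}, \thref{club} lets me realize this l.u.b.\ as $\pi(P')$ for some $P'\in\mathcal{P}(\mathcal{B}(H))$ with $\mathcal{R}(P')\subseteq V$. The crux is then the geometric fact that $\mathcal{R}(P_\infty)\ominus\mathcal{R}(P')$ is infinite dimensional: if it were finite dimensional, the quotient $\overline{V}/\mathcal{R}(P')$ would be finite dimensional, so the image of $V$ under the (continuous) quotient map $q\colon\overline{V}\to\overline{V}/\mathcal{R}(P')$ would be closed, whence $V=q^{-1}(q(V))$ would be closed (using $\mathcal{R}(P')\subseteq V$ to identify $q^{-1}(q(V))$ with $V$), contradicting our assumption. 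As $\mathcal{R}(P')\subseteq V\subseteq\overline{V}=\mathcal{R}(P_\infty)$ forces $P'\leq P_\infty$, the dictionary now gives $\pi(P')\neq\pi(P_\infty)=\pi(\bigvee P_n)$, so $\pi(\bigvee P_n)$ cannot be the l.u.b.\ of $(\pi(P_n))$. I expect this last geometric step \textemdash\ showing that a genuinely non-closed $V$ cannot contain a closed subspace of finite codimension in $\overline{V}$ \textemdash\ to be the only substantive obstacle; everything else is bookkeeping around \thref{PPn} and \thref{club}.
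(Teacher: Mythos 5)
Your proposal is correct and follows essentially the same route as the paper: the first equivalence via \ref{club1}$\Leftrightarrow$\ref{club3} of \thref{PPn} under the compactness dictionary, the forward half of the ``Hence'' clause by applying this to $\bigvee P_n$, and the converse by using \thref{club} (with the $\omega$-property/$\sigma$-closedness of $\mathcal{C}(H)$) to realize the l.u.b.\ as $\pi(P')$ with $\mathcal{R}(P')\subseteq\sum\mathcal{R}(P_n)$ and then concluding that $\sum\mathcal{R}(P_n)$ is a finite dimensional extension of a closed subspace. The only differences are cosmetic: you argue the converse by contraposition and spell out, via the quotient map, the standard fact that a subspace containing a closed subspace of finite codimension in its closure is itself closed, which the paper simply asserts.
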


\begin{proof} The \ref{club1}$\Leftrightarrow$\ref{club3} part of \thref{PPn} proves the first part of this corollary.  It (or the \ref{club2}$\Rightarrow$\ref{club1} part) also shows that if $\sum\mathcal{R}(P_n)$ is closed then $\pi(\bigvee P_n)=\bigvee\pi(P_n)$.  Conversely, if we have $\pi(\bigvee P_n)=\bigvee\pi(P_n)$ then, for the $(Q_n)$ in \thref{club}, we must have $\pi(Q_m)=\bigvee\pi(P_n)=\pi(\bigvee P_n)$, for some $m\in\omega$, because $\pi[\mathcal{B}(H)]$ is isomorphic to $\mathcal{C}(H)$ and hence $\mathcal{P}(\pi[\mathcal{B}(H)])$ is $\sigma$-closed.  As $\mathcal{R}(Q_m)\subseteq\sum\mathcal{R}(P_n)\subseteq\overline{\sum\mathcal{R}(P_n)}=\mathcal{R}(\bigvee P_n)$, we see that $\sum\mathcal{R}(P_n)$ is a finite dimensional extension of the closed subspace $\mathcal{R}(Q_m)$ and hence itself closed. \end{proof}

\section{The Order on Self-Adjoint Operators}

While not the main focus of this paper, we prove one result about the order on all self-adjoint operators $\mathcal{S}(A)$ for an arbitrary unital C$^*$-algebra $A$, improving on \cite{j} Corollary 4.

\begin{thm}
Let $A$ be an arbitrary unital C$^*$-algebra.  If $P,Q\in\mathcal{P}(A)$ do not commute then there exists $S\in\mathcal{S}(A)$ such that $S$ and $0$ are incomparable and $(\{0,S\},\{P,Q\})$ is a gap in $\mathcal{S}(A)$.
\end{thm}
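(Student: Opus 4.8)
The plan is to take $S=P+Q-1$. Since $P-S=1-Q=Q^\perp\geq0$ and $Q-S=1-P=P^\perp\geq0$, this $S$ is automatically a lower bound of $\{P,Q\}$ in $\mathcal{S}(A)$, in any C$^*$-algebra and with no appeal to real rank zero. To see that $(\{0,S\},\{P,Q\})$ is a pregap I would only need the strict inequalities: $0<P$ and $0<Q$ hold because non-commutativity forces $P,Q\notin\{0,1\}$ (so in particular $P,Q\neq0$), while $S<P$ and $S<Q$ hold because $S=P\Leftrightarrow Q=1$ and $S=Q\Leftrightarrow P=1$, both of which are excluded.

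Next I would verify that $S$ is incomparable to $0$, which is exactly where non-commutativity enters. Note $S\geq0$ says $Q^\perp\leq P$; compressing by $P^\perp$ gives $P^\perp Q^\perp P^\perp=0$, hence $Q^\perp P^\perp=0$ and so $PQ^\perp=Q^\perp$, whence $PQ=P+Q-1=S$ is self-adjoint, forcing $PQ=QP$. Symmetrically, $S\leq0$ says $P\leq Q^\perp$, i.e. $PQ=0$, which again gives $PQ=QP$. As $P$ and $Q$ do not commute, neither $S\geq0$ nor $S\leq0$ can hold, so $0$ and $S$ are incomparable (in particular $S\neq0$).

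The substance of the argument is showing the pregap has no interpolant. Suppose, for contradiction, that some $r\in\mathcal{S}(A)$ satisfies $0<r$, $S<r$, $r<P$ and $r<Q$; in particular $0\leq r\leq P$, $0\leq r\leq Q$ and $r\geq S$. The idea is to refute $r\geq S$ using a single approximate eigenvector. I would faithfully represent $A\subseteq\mathcal{B}(H)$ (order and commutation being preserved) and use the well-known fact that $P,Q$ commute if and only if $PQP$ is a projection, i.e. if and only if $\sigma(PQP)\subseteq\{0,1\}$; since $P,Q$ do not commute and $0\leq PQP\leq1$, there is some $\lambda\in\sigma(PQP)\cap(0,1)$. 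As $PQP$ vanishes on $\mathcal{R}(P^\perp)$, the value $\lambda>0$ is approximated by unit vectors in $\mathcal{R}(P)$, so I may pick a unit $u\in\mathcal{R}(P)$ with $\|(PQP-\lambda)u\|$ as small as I like. Then $Pu=u$ gives $\langle Su,u\rangle=\langle Qu,u\rangle=\langle PQPu,u\rangle\approx\lambda$. On the other hand, $0\leq r\leq Q$ yields $r=QrQ$, and then $r\leq P$ yields $r\leq QPQ$, so $\langle ru,u\rangle\leq\langle QPQu,u\rangle=\|PQu\|^2$; since $PQu=PQPu\approx\lambda u$, this is $\approx\lambda^2$. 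Because $\lambda^2<\lambda$, taking the approximation error small enough makes $\langle ru,u\rangle<\langle Su,u\rangle$, contradicting $r\geq S$. Hence no interpolant exists and $(\{0,S\},\{P,Q\})$ is a gap.

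The main obstacle is this last spectral step: I must make sure the approximate eigenvector can be chosen inside $\mathcal{R}(P)$ and that the error terms in the two estimates $\langle Su,u\rangle\approx\lambda$ and $\langle ru,u\rangle\lesssim\lambda^2$ are controlled simultaneously, so that the strict gap $\lambda^2<\lambda$ (valid precisely because $\lambda\in(0,1)$) produces a genuine strict contradiction. Everything else — the lower-bound and incomparability computations — is elementary projection algebra. It is worth noting that the argument in fact establishes the stronger statement that there is no positive $r$ at all with $r\leq P,Q$ and $r\geq S$, which is why the pregap is not merely without greatest lower bound but is an actual gap.
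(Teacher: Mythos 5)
Your proof is correct, but it takes a genuinely different route from the paper's. The paper does not use $S=P+Q-1$; instead it fixes $r\in\sigma(PQP)\cap(0,1)$ and builds $S=f(PQP)$ by the functional calculus, where $f$ is $\leq-1$ on $(-\infty,r/2]$, $\leq r/4$ on $[r/2,\infty)$, and strictly positive at $r$. There the bounds $S\leq P$ and $S\leq Q$ require genuine vector estimates, and the gap property is obtained by citing Kadison's result (\cite{j} Lemma 2) that $P\wedge Q$ is the greatest lower bound of $\{P,Q\}$ among \emph{positive} operators: any interpolant would be positive and below $P,Q$, hence below $P\wedge Q$, whereas a spectral vector of $PQP$ near $r$ shows $S\nleq P\wedge Q$. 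Your argument replaces all of this: the candidate $S=P+Q-1$ makes the lower-bound and strictness checks trivial, incomparability with $0$ becomes elementary projection algebra (either of $Q^\perp\leq P$ or $P\leq Q^\perp$ forces commutation), and, most substantively, you refute an interpolant directly with an approximate eigenvector $u\in\mathcal{R}(P)$ for $\lambda\in\sigma(PQP)\cap(0,1)$, playing $\langle Su,u\rangle\geq\lambda-\epsilon$ against $\langle ru,u\rangle\leq\|PQu\|^2\leq(\lambda+\epsilon)^2$ (via $0\leq r\leq Q\Rightarrow r=QrQ$ and hence $r\leq QPQ$), with $\lambda^2<\lambda$ supplying the contradiction. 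The points you flag as delicate do go through: since $PQP$ is self-adjoint and vanishes on $\mathcal{R}(P)^\perp$, and $\lambda>0$, approximate eigenvectors compress into $\mathcal{R}(P)$ after normalization, and both error terms are governed by the same $\epsilon$, so any $\epsilon$ with $(\lambda+\epsilon)^2<\lambda-\epsilon$ closes the argument. As for what each approach buys: yours is more self-contained and elementary, avoiding spectral families and the appeal to \cite{j}, while still establishing (as the paper's proof also does) the stronger fact that no positive element below both $P$ and $Q$ lies above $S$; the paper's construction, by routing through $P\wedge Q$ and Kadison's lemma, keeps the relation to the g.l.b. structure of the positive cone explicit and produces a whole family of witnesses $f(PQP)$ rather than a single canonical one.
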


\begin{proof} As $P$ and $Q$ do not commute, $PQP$ is not a projection and we have $r\in(0,1)\cap\sigma(PQP)$.  Let $f$ be a continuous function on $\mathbb{R}$ with the following properties
\begin{enumerate}
\item $f(r)>0$,
\item $f(t)\leq-1$, for all $t\leq r/2$, and
\item $f(t)\leq r/4$, for all $t\geq r/2$.
\end{enumerate}
Let $S=\int f(t)dE_{PQP}(t)$ and take $v\in H$.  If $\langle Sv,v\rangle\leq 0$, then we certainly have $\langle Sv,v\rangle\leq\langle Qv,v\rangle$.  Otherwise, as \[\langle Sv,v\rangle=\langle SE_{PQP}(r/2)v,v\rangle+\langle SE^\perp_{PQP}(r/2)v,v\rangle\leq (r/4)||E^\perp_{PQP}(r/2)v||^2-||E_{PQP}(r/2)v||^2,\] we must have $||E_{PQP}(r/2)v||\leq(\sqrt{r}/2)||E^\perp_{PQP}(r/2)v||\leq\sqrt{r/2}||E^\perp_{PQP}(r/2)v||$.  Thus, as
\[||Qv||\geq||QE^\perp_{PQP}(r/2)v||-||E_{PQP}(r/2)v||\geq\sqrt{r/2}||E^\perp_{PQP}(r/2)v||-||E_{PQP}(r/2)v||,\] we have $\langle Qv,v\rangle=||Qv||^2\geq(\sqrt{r/2}||E^\perp_{PQP}(r/2)v||-||E_{PQP}(r/2)v||)^2$.  Therefore, to show that $S\leq Q$, it suffices to show that \[(r/4)x^2-y^2\leq(\sqrt{r/2}x-y)^2=rx^2/2-\sqrt{2r}xy+y^2,\] for all $r,x,y\in\mathbb{R}$.  But this follows immediately from the fact that \[0\leq rx^2/4-\sqrt{2r}xy+2y^2=(\sqrt{r}x/2-\sqrt{2}y)^2,\] for all $r,x,y\in\mathbb{R}$.

On the other hand, $P$ commutes with $PQP$ and hence with $S$, as $S$ is in the closure of the algebra generated by $PQP$ and $1$.  Thus, \[\langle Sv,v\rangle-\langle Pv,v\rangle=\langle SP^\perp v,P^\perp v\rangle\leq-||P^\perp v||^2,\] for all $v\in H$, the last inequality coming from $\mathcal{R}(P)^\perp\subseteq\mathcal{R}(E_{PQP}(0)))\subseteq\mathcal{R}(E_{PQP}(r/2)))$.  Thus we also have $S\leq P$.

However, for $\epsilon>0$ small enough that $\epsilon<1-r$ and $f(t)>0$, for all $t\in[r-\epsilon,r+\epsilon]$, we can find non-zero \[v\in\mathcal{R}(E_{PQP}(r+\epsilon)-E_{PQP}(r-\epsilon))\subseteq E_{PQP}(1-)=(\mathcal{R}(P)\cap\mathcal{R}(Q))^\perp.\]  We therefore have $\langle Sv,v\rangle>0$ even though $\langle(P\wedge Q)v,v\rangle=0$.  Thus $S\nleq P\wedge Q$, even though $P\wedge Q$ is a g.l.b. of $P$ and $Q$ in the set of all $T\in\mathcal{S}(\mathcal{B}(H))$ with $T\geq0$, by \cite{j} Lemma 2.  Thus the pregap $(\{0,S\},\{P,Q\})$ can not be interpolated.

The $v$ of the last paragraph witnesses the fact that $S\nleq0$, while any non-zero $v\in\mathcal{R}(P)^\perp\subseteq\mathcal{R}(E_{PQP}(0))$ (note that $P\neq 1$, as $P$ and $Q$ do not commute, and hence $P^\perp\neq 0$) will witness the fact that $S\ngeq0$, i.e. $S$ and $0$ are incomparable. \end{proof}

\begin{cor}\thlabel{PQncom}
Let $A$ be an arbitrary unital C$^*$-algebra.  If $P,Q\in\mathcal{P}(A)$ have a g.l.b. in $\mathcal{S}(A)$ then they necessarily commute.
\end{cor}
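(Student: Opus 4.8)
The plan is to prove the contrapositive, deducing the corollary immediately from the preceding theorem. So I would begin by assuming that $P$ and $Q$ do \emph{not} commute and applying the theorem to obtain $S\in\mathcal{S}(A)$ such that $S$ and $0$ are incomparable and $(\{0,S\},\{P,Q\})$ is a gap in $\mathcal{S}(A)$. The goal is then to show that, under this hypothesis, $P$ and $Q$ can have no g.l.b. in $\mathcal{S}(A)$, so I would suppose toward a contradiction that $R\in\mathcal{S}(A)$ is such a g.l.b.\ and aim to show that $R$ interpolates the gap.

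The key observation is that $R$ is sandwiched between the two sides of the gap. Since projections are positive we have $0\leq P$ and $0\leq Q$, and the pregap property gives $S\leq P$ and $S\leq Q$; hence both $0$ and $S$ are lower bounds of $\{P,Q\}$, so the defining property of the greatest lower bound forces $0\leq R$ and $S\leq R$, while $R$ being a lower bound itself gives $R\leq P$ and $R\leq Q$. It then remains to upgrade all four of these inequalities to strict ones.

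For the upper side I would argue that $R=P$ would force $P=R\leq Q$, i.e.\ $Q-P\geq0$, which for projections means $\mathcal{R}(P)\subseteq\mathcal{R}(Q)$ and hence $PQ=P=QP$, contradicting non-commutativity; the symmetric argument rules out $R=Q$. For the lower side, $R=0$ would give $S\leq R=0$, and $R=S$ would give $0\leq R=S$, each contradicting the incomparability of $S$ with $0$. Thus $0<R<P$, $0<R<Q$, $S<R<P$ and $S<R<Q$, so $R$ interpolates $(\{0,S\},\{P,Q\})$, contradicting the fact that this is a gap.

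I do not anticipate any genuine obstacle here, since the substantive work \textendash\ constructing $S$ and establishing that $(\{0,S\},\{P,Q\})$ is a gap \textendash\ has already been carried out in the theorem. The only care required is in verifying the strictness of the four inequalities, which rests on just two elementary facts: that comparable projections commute, and that $S$ is incomparable with $0$.
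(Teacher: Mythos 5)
Your proposal is correct and is exactly the deduction the paper intends: the corollary is stated as an immediate consequence of the preceding theorem, and your contrapositive argument \textendash\ showing a putative g.l.b.\ $R$ would have to satisfy $0\leq R$, $S\leq R$, $R\leq P$, $R\leq Q$ and then ruling out equality with each of $0$, $S$, $P$, $Q$ so that $R$ would interpolate the gap \textendash\ is precisely that deduction, with the strictness details (comparable projections commute; $S$ and $0$ are incomparable) correctly filled in.
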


\end{document}